\documentclass{article}

\usepackage{graphicx}

\usepackage[left=1in,right=1in,bottom=1in,top=1in]{geometry}
\usepackage{graphicx}
\usepackage{amsmath,amsthm,amssymb}
\usepackage[all,cmtip]{xy}
\usepackage{cite}
\usepackage{setspace}


\theoremstyle{plain}
\newtheorem{lem}{Lemma}
\newtheorem{lemma}[lem]{Lemma}

\newtheorem{theorem}[lem]{Theorem}
\newtheorem{prop}[lem]{Proposition}
\newtheorem{proposition}[lem]{Proposition}

\newtheorem{corollary}[lem]{Corollary}

\newtheorem{conjecture}[lem]{Conjecture}

\theoremstyle{definition}

\newtheorem{definition}[lem]{Definition}

\theoremstyle{remark}

\DeclareMathOperator{\oor}{or}
\DeclareMathOperator{\sg}{sg}

\DeclareMathOperator{\hht}{ht}
\DeclareMathOperator{\sep}{sep}
\DeclareMathOperator{\tr}{tr}

\title{A random walk on the symmetric group generated by random involutions}

\author{Megan Bernstein \\ Georgia Institute for Technology} 

\begin{document}

\maketitle


\begin{abstract}
 The involution walk is the random walk on $S_n$ generated by involutions with a binomially distributed with parameter $1-p$ number of $2$-cycles. This is a parallelization of the transposition walk. The involution walk is shown in this paper to mix for $\frac{1}{2} \leq p \leq 1$ fixed, $n$ sufficiently large in between $\log_{1/p}(n)$ steps and $\log_{2/(1+p)}(n)$ steps. The paper introduces a new technique for finding eigenvalues of random walks on the symmetric group generated by many conjugacy classes using the character polynomial for the characters of the representations of the symmetric group. Monotonicity relations used in the bound also give after sufficient time the likelihood order, the asymptotic order from most likely to least likely permutation. The walk was introduced to study a conjecture about a random walk on the unitary group from the information theory of black holes. 
\end{abstract}



\section{Introduction}

This paper examines how a natural notion of ``parallelization'' affects the rate of convergence to stationary for a random walk on the symmetric group. The base walk that this paper parallelizes is the $p$ lazy random transposition walk. It has as generators the identity with probability $p$ and a uniformly random transposition with probability $1-p$. This is equivalent to putting $n$ cards on the table and with probability $1-p$ swapping a random pair. The transposition walk for $p=\frac{1}{n}$ takes order $\frac{1}{2}n\log(n)+cn$ steps to converge to its uniform stationary distribution \cite{DS}. Suppose the walk is parallelized by simultaneously transposing $s$ disjoint pairs at the same time. This is like taking $s$ steps of the non-lazy transposition walk, except it guarantees $2s$ distinct cards are moved. This problem can be explored in several ways. For $n$ even, the maximum number of disjoint transpositions is $n/2$. If these are chosen via a random matching, a randomly chosen fixed point free involution results. The walk generated by all fixed point free involutions was analyzed by Lulov \cite{Lulov}, who showed that it mixes in $3$ steps. In this paper, each transposition in a fixed point free involution is discarded with some probability. This is a parallelized $p$-lazy transposition walk. When this probability is fixed and at least $\frac{1}{2}$ the results here show that this walk has mixing time $\Theta(\log(n))$.

More specifically, this paper studies the random walk on $S_n$, for $n$ even, generated by first choosing uniformly at random a fixed point free involution, also known as a perfect matching, then discarding or keeping each $2$-cycle it contains independently with probability $p$, $1-p$ respectively. This means the probability an involution with $s$ $2$-cycles is selected is ${{n/2 \choose s}} p^{n/2-s}(1-p)^{s}$. By considering general $p$, this gives a family of walks with $pn$ as the expected number of fixed points of a generator.  Taking $p=1-2/n$ gives an ``expected transposition walk'' where on average, a transposition will be selected, or as $p \rightarrow 0$, an expected fixed-point free involution walk. The author conjectures that mixing occurs with cutoff at $\log_{1/p}(n) + \log_{1/p}(c)$ steps for any $p$ bounded away from $0$. This mixing time would interpolate from an expected transposition walk to an expected $s$ $2$-cycle walk for any $s<n/2$ with comparable mixing times to their non-random cousins, in particular the transposition walk mixing with cutoff at $\frac{1}{2}n\log(n) + cn$ steps \cite{DS}. This paper, for $p\geq 1/2$ fixed, for $n$ sufficiently large, establishes for mixing a lower bound of $\log_{1/p}(n)$ in Theorem \ref{theorem:invlb} and an upper bound of $\log_{2/(1+p)}(n)$ in Theorem \ref{theorem:invup}. These are separated by just over a factor of $2$.

This upper bound is found through a combination of two methods. Both use the expression of the eigenvalues of the walk in terms of the characters of the symmetric group. The character polynomial gives the characters of $S_n$ as a polynomial in the cycle decomposition of a permutation. The eigenvalues of this walk, as seen in (\ref{eigf}), are a linear combination of characters evaluated at the $n/2+1$ conjugacy classes of involutions. Since all these involutions have only $1$- and $2$-cycles, understanding the character polynomial in these cycles will give a strong bound on the large eigenvalues of the walk.  A recursive formula for the eigenvalues given in Proposition \ref{eigformula} is used to control the small eigenvalues. This recursion constructed via the Murnaghan-Nakayama rule leads to a series of monotonicity conditions on the eigenvalues. These monotonicity conditions require that $p \geq \frac{1}{2}$.

A secondary result of these monotonicity conditions, and so also restricted to $p \geq \frac{1}{2}$, is a total order for the most likely to least likely element after sufficient time is identified in Corollary \ref{lhoiw}. At each step of a Markov chain, there is a partial ordering from the most likely to the least likely state. A linear extension to a total order is called a likelihood order. With mild conditions, after sufficient time, this converges to a fixed likelihood order. For the involution walk with $p \geq \frac{1}{2}$, the limiting likelihood order is the cycle lexicographic order as defined in Definition \ref{CLorder}. This means after sufficient time the identity will be the most likely element and an $n$-cycle the least likely element of the walk. This is the same likelihood order as $p$ lazy transposition walk for $p\geq \frac{1}{2}$ \cite{Mythesis}. Likelihood orders are motivated by the total variation distance and separation distance metrics of studying Markov chain convergence. 

The most common quantification of the convergence to uniform of a random walk on a group $G$ is total variation distance. Let $P^{*t}(g)$ denote the probability of being at $g$ at time $t$ for a random walk on $G$. Let $A \subset G$ denote a subset $A$ of $G$, and $P^{*t}(A)$ the total probability of elements of $A$. Then,
\[ \left|\left|P^{*t}(\cdot) - \frac{1}{|G|}\right|\right|_{TV} = \max_{A \subset G} \left|P^{*t}(A) - \frac{|A|}{|G|}\right| \]
One maximal set is $A = \{ g \in G| P^{*t}(g) > \frac{1}{|G|} \}$. Identifying where $\frac{1}{|G|}$ sits inside the likelihood order splits the group elements into this set $A$ and its compliment. The principal technique for finding likelihood orders will also give what this set is after sufficient time. Even a partial identification of this maximal set is of use in constructing lower bounds on mixing, in other words, for showing the total variation distance is not yet small.  Separation distance measures how much less likely than uniform the least likely element is. Identifying the least likely element means this can be computed directly. Separation distance can be measured indirectly through Strong Stopping Time arguments. 

The first work on likelihood orders was done by Diaconis and Graham, see Chapter 3C Exercise 10 of \cite{Diaconis}, motivated by a statistical problem posed by Tom Ferguson. Likelihood orders were also later studied by Diaconis and Isaacs \cite{DI}. They found that the random walk on a cycle or hypercube showed a strong monotonicity condition consistent with the total order given by distance as counted by the generators from the start. Since their likelihood orders hold at all times, simple induction suffices. For more examples of inductive proofs of likelihood orders see Chapter 1 of \cite{Mythesis}. 

The limiting likelihood order is a statement of eigenvalue monotonicity. One element is eventually more likely than another if, in the difference of their probabilities expressed in terms of the eigenvalues using the discrete Fourier inversion formula, the term with largest eigenvalue with non-zero coefficient is positive.  Diaconis and Shahshahani \cite{DS} in their seminal paper on mixing for the random transposition walk used a formula of Frobenius to establish a monotonicity property for the eigenvalues of the walk. The eigenvalues are labeled by partitions of $n$, and they observed that a classical partial order on partitions called majorization order is consistent with a monotonic decline in the eigenvalues. Diaconis and Graham showed that no total monotonic order can hold at all times for the transposition walk on the symmetric group. The likelihood order fluctuates within even a small number of steps (for $n \geq 6$, the first change occurs after four steps). Lulov, in his thesis \cite{Lulov}, connected the ordering on eigenvalues to what he termed an ``asymptotic monotonicity property'', here called a likelihood order, of the elements of the walk after sufficient time. He showed the transposition walk restricted to even steps after sufficient time followed a cycle lexicographic order. 

As $p \rightarrow 0$ and $n$ is held constant, due to a parity problem, the walk can no longer mix in $O(\log(n))$ steps let alone the smaller $O(\log_{1/p}(n))$ steps. The fixed point free involution walk at even steps is confined to $A_n$ inside of $S_n$. While for any $p >0$, the involution walk will mix to all of $S_n$, as $p \rightarrow 0$, the probability of selecting a fixed point free involution at each step of the walk will grow to $1$. Since it becomes more and more unlikely as $p \rightarrow 0$ anything other than a fixed point free involution is chosen, at even steps, the involution walk will be more and more prone to be stuck on even elements inside of $S_n$ and take longer and longer to approach uniformity over all elements. This is shown in Proposition \ref{tinyp}.

A random permutation can be made from at most $n$ transpositions chosen systematically. This systematic scan consists of transposing in order the number in each position with itself or a later position uniformly at random. If instead the transpositions are chosen uniformly at random, as in the transposition walk, a random permutation takes $\frac{1}{2}n\log(n) + cn$ transpositions to build ~\cite{DS}. The largest impediment is a coupon collector problem of never choosing a transposition containing a large fraction of numbers by $\frac{1}{2}n\log(n) -cn$ steps.  These never moved numbers are fixed points of the permutation, resulting in insufficiently random permutations. On the other hand, generating the random even permutation from a walk generated by fixed point free involutions, takes $3$ steps of the walk or $\frac{3}{2}n$ transpositions ~\cite{Lulov}. In the involution walk by letting $p$ vary, one can study this transition from the minimum of $O(n)$ transpositions to $O(n\log(n))$ to build a random permutation. The analysis here holds for fixed $p \geq \frac{1}{2}$ and $n$ sufficiently large. In all such cases it takes $O(n\log(n))$ transpositions to build a random permutation.


While studying the information theory of black holes, physicists became interested in a random walk on the unitary group. Information in the black hole is expressed in qubits, each an element of $\mathbb{C}^2$. Take as basis vectors $e_1 = (1,0)$, $e_0 = (0,1)$. The walk is on $n$ qubits, and so it on a $2^n$-dimensional space with a basis indexed by $n$-bit binary strings. At each step the walk takes a random $U(4)$ operator and applies it to two random qubits, acting on the binary strings indexing the basis. All $2^{n-2}$ basis vectors with the same $2$-bit combination for those two qubits are effected the same way under the walk. This means each $U(4)$ operator acts $2^{n-2}$ times, giving rapid mixing for such a high dimensional space. This walk is known to ``scramble'' in $n\log(n)$ steps \cite{SSblack}. Recent work of Hayden and Preskill \cite{HPblack} and Sekino and Susskind \cite{SSblack} has developed interest in a version of this walk where $n/2$ commuting steps of the random walk are taken at once. A different $U(4)$ operator is chosen to each act on the different $2$-cycles of a perfect matching of the qubits. This faster walk is conjectured to mix in $O(\log(n))$ steps. The involution walk was designed as a toy model to study the effects of independent random shuffling on the components of a perfect matching. 

Section \ref{background} describes the upper bound lemma from discrete Fourier analysis and the eigenvalues of the walk. Section \ref{mon} finds monotonic decay of the eigenvalues needed for the bounds and the likelihood order of the walk. Section \ref{biinv} finds an upper bound for the mixing time of the binomially distributed involution random walk. Section \ref{lowerbound} finds the the lower bound of $\log_{\frac{1}{p}}(n)$. Section \ref{conjsep}  calculates the separation distance for the involution walk assuming the conjecture that the likelihood order holds at all times.

\section{Background}\label{background}

The upper-bound lemma gives a bound on that the total-variation distance between the walk on a group and its uniform stationary distribution using its eigenvalues expressed in terms of the groups representations  ~\cite{Diaconis}. The version below is specialized to conjugacy class walks on the symmetric group, in which every element of a conjugacy class is equally likely. Partitions of $n$ index the non-trivial irreducible representations of the symmetric group as well as the eigenvalues of the walk; the representation for the partition $\lambda$ has dimension $d_{\lambda}$.

\begin{prop}[Diaconis-Shahshahani \cite{DS}]
When $K(t)$ is a class function of an aperiodic, irreducible walk on $S_n$,
\[ \left|\left|K^{*t}(\sigma) - \frac{1}{n!}\right|\right|_{TV} \leq \frac{1}{4} \sum_{\lambda \neq 1} d_{\lambda}^2 \psi_{\lambda}^{2t} \]
The sum below is over conjugacy classes $\kappa$ of size $|\kappa|$ with $K(\kappa)$ the probability of one element of the conjugacy class, \[\psi_\lambda = \sum_{\kappa } |\kappa|K(\kappa)\frac{\chi_{\lambda}(\kappa)}{d_\lambda}\] 
\end{prop}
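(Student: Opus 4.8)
The plan is to follow the classical Fourier-analytic route, reducing the total-variation distance to an $L^2$ computation that diagonalizes under the irreducible representations of $S_n$. Write $U \equiv 1/n!$ for the uniform density and $f_t = K^{*t} - U$. First I would invoke the elementary bound $\|f_t\|_{TV} = \tfrac12\sum_{\sigma} |f_t(\sigma)| \le \tfrac12\sqrt{n!}\,\bigl(\sum_\sigma f_t(\sigma)^2\bigr)^{1/2}$ by Cauchy--Schwarz, so that it suffices to control the $L^2$ norm of $f_t$.

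Next I would pass to the Fourier side via the Plancherel formula for $S_n$: with $\hat f(\rho) = \sum_\sigma f(\sigma)\rho(\sigma)$, one has $\sum_\sigma f(\sigma)^2 = \frac{1}{n!}\sum_{\rho} d_\rho\,\mathrm{Tr}\bigl(\hat f(\rho)\hat f(\rho)^{*}\bigr)$, the sum over irreducibles $\rho$ of dimension $d_\rho$. Because $K$ is a probability distribution, $\hat K(\mathrm{triv})=1$, while $\hat U$ vanishes on every nontrivial $\rho$ and equals $1$ on the trivial one; hence $\hat f_t$ vanishes on the trivial representation and the sum runs only over $\lambda \neq 1$. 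Using that convolution transforms to matrix multiplication, $\widehat{K^{*t}}(\rho) = \hat K(\rho)^t$, so the whole dependence on $t$ sits inside the $t$-th power of a single matrix per representation.

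The crucial structural input is that $K$ is a class function, so by Schur's lemma $\hat K(\rho)$ commutes with $\rho(\sigma)$ for all $\sigma$ and is therefore a scalar matrix $\hat K(\rho) = c_\rho I_{d_\rho}$; taking traces gives $c_\rho = \frac{1}{d_\rho}\sum_\sigma K(\sigma)\chi_\rho(\sigma) = \frac{1}{d_\rho}\sum_\kappa |\kappa|\,K(\kappa)\,\chi_\rho(\kappa) = \psi_\rho$, exactly the quantity in the statement. Since every element of $S_n$ is conjugate to its inverse, the characters $\chi_\lambda$ are real, so $\psi_\lambda\in\mathbb{R}$ and $\mathrm{Tr}\bigl(\psi_\lambda^t I\cdot(\psi_\lambda^t I)^{*}\bigr) = d_\lambda\,\psi_\lambda^{2t}$. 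Substituting back, $\sum_\sigma f_t(\sigma)^2 = \frac{1}{n!}\sum_{\lambda\neq 1} d_\lambda\cdot d_\lambda\,\psi_\lambda^{2t}$, and feeding this into the Cauchy--Schwarz bound yields $\|K^{*t}-U\|_{TV}^2 \le \tfrac14\sum_{\lambda\neq 1} d_\lambda^2\psi_\lambda^{2t}$, the upper bound lemma. Aperiodicity and irreducibility are used only to guarantee $|\psi_\lambda|<1$ for $\lambda\neq 1$, so the right-hand side genuinely tends to $0$.

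No step here is a real obstacle — this is the standard Diaconis--Shahshahani reduction — but the one place to be careful is the bookkeeping of dimension factors: the factor $d_\lambda$ from Plancherel multiplies the factor $d_\lambda$ coming from the Hilbert--Schmidt norm of the scalar matrix $\psi_\lambda^t I_{d_\lambda}$, producing the $d_\lambda^2$ in the final sum, and one must also correctly track the $\tfrac12\sqrt{n!}$ from Cauchy--Schwarz against the $1/n!$ in Plancherel so that all factorials cancel.
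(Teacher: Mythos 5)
Your argument is correct and is exactly the classical Diaconis--Shahshahani proof (Cauchy--Schwarz to reduce total variation to an $L^2$ norm, Plancherel over the irreducibles, Schur's lemma turning the Fourier transform of the class function into the scalar $\psi_\lambda I_{d_\lambda}$, and reality of $S_n$ characters); the paper simply cites \cite{DS} for this proposition and gives no proof, so there is nothing different to compare against. Note only that what you derive is the bound on $\left|\left|K^{*t} - \frac{1}{n!}\right|\right|_{TV}^2$, which is the standard statement and the form the paper actually uses in Section \ref{biinv}; the exponent $2$ is missing from the proposition as printed in the Background section, evidently a typo rather than a discrepancy in your proof.
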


For this walk, the formula for the eigenvalue $\psi_\lambda$ is the sum over conjugacy classes of the probability of it being a generator times its character ratio:

\begin{align}\label{eigf} \psi_\lambda = \sum_{s=0}^{n/2} p^{n/2-s}(1-p)^s {{n/2 \choose s}} \frac{ \chi_{\lambda}(1^{n-2s},2^{s})}{d_{\lambda}} \end{align}

Bounds for these eigenvalues will be established through a combination of monotonicity relations on the eigenvalues and upper bounds on a handful of the eigenvalues using the character polynomial.

One formula for the character for the representation indexed by $\lambda$ evaluated at a conjugacy class $\alpha$, $\chi_\lambda(\alpha)$ is given by the Murnaghan-Nakayama rule. This rule expresses the character in terms of all the ways to decompose the partition $\lambda$ into borderstrips (also known as rimhooks) of sizes $\alpha_1,...,\alpha_r$ in any fixed order. A borderstrip is a skew-partition, or the difference of two partitions, containing no two by two boxes. A decomposition can be written as a sequence of partitions $P= (\rho_0,...,\rho_r)$ where $\rho_0=\lambda$, $\rho_r = \emptyset$ and the difference between sequential partitions, $\rho_i/\rho_{i+1}$ each with $\alpha_{i+1}$ boxes, is a borderstrip. We say the height of a borderstrip is one less than the vertical height of that partition, and the height of $P$, $\hht(P)$, is the sum of the heights of the border strips in the decomposition. Since the walk is generated by only one and two cycles, only borderstrips of size $1$ and $2$ will be necessary. There is only one borderstrip of size one - corresponding to the partition $[1]$ or a single box in a Young diagram. The two borderstrips of size two are $[2]$, with Young diagram two horizonal boxes, and $[1,1]$, with Young diagram two verticle boxes. These three borderstrips have heights zero, zero, and one, respectively. 

\begin{prop}[Murnaghan-Nakayama Rule]
\[\chi_\lambda(\alpha) = \sum_{P=(\rho_0,...,\rho_r)} (-1)^{\hht{P}}\]
\end{prop}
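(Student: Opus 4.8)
The plan is to deduce the rule from the Frobenius characteristic map, which reduces the whole statement to a single Pieri-type identity for a power sum times a Schur function. Recall that the characteristic map is an isometry from class functions on $S_n$ onto the degree-$n$ symmetric functions sending the irreducible character $\chi_\lambda$ to the Schur function $s_\lambda$; equivalently, for every partition $\alpha=(\alpha_1,\dots,\alpha_r)\vdash n$ one has the Frobenius formula
\[ p_\alpha \;=\; p_{\alpha_1}p_{\alpha_2}\cdots p_{\alpha_r} \;=\; \sum_{\lambda\vdash n}\chi_\lambda(\alpha)\,s_\lambda, \]
which I would take as known. It therefore suffices to expand the left-hand product in the Schur basis and to check that the coefficient of $s_\lambda$ equals $\sum_P(-1)^{\hht(P)}$, the sum running over all border-strip decompositions $P=(\rho_0,\dots,\rho_r)$ of $\lambda$ with $\rho_0=\lambda$, $\rho_r=\emptyset$, and $\rho_{i-1}/\rho_i$ a border strip with $\alpha_i$ boxes.

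The engine of the argument is the one-step rule: for any partition $\mu$ and any $k\ge 1$,
\[ p_k\, s_\mu \;=\; \sum_{\nu}\, (-1)^{\hht(\nu/\mu)}\, s_\nu, \]
where $\nu$ ranges over all partitions $\nu\supseteq\mu$ for which $\nu/\mu$ is a border strip with exactly $k$ boxes. Granting this, I would induct on $r$: starting from $s_\emptyset=1$ and multiplying successively by $p_{\alpha_r},p_{\alpha_{r-1}},\dots,p_{\alpha_1}$, each multiplication attaches one more border strip of the prescribed size and multiplies the running coefficient by the sign $(-1)^{\hht}$ of that strip. After all $r$ steps the coefficient of $s_\lambda$ in $p_\alpha$ is the sum, over chains $\emptyset=\rho_r\subset\dots\subset\rho_0=\lambda$ whose consecutive quotients are border strips of sizes $\alpha_r,\dots,\alpha_1$, of $\prod_{i}(-1)^{\hht(\rho_{i-1}/\rho_i)}=(-1)^{\hht(P)}$, which is exactly the claimed formula. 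For the involution walk only $k\in\{1,2\}$ arise, so only the three border strips $[1],[2],[1,1]$ of heights $0,0,1$ ever occur, but the induction does not use this.

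It remains to prove the one-step rule. The classical route uses the Jacobi--Trudi identity $s_\mu=\det\big(h_{\mu_i-i+j}\big)_{1\le i,j\le\ell}$ together with Newton's identities expressing $p_k$ through the complete homogeneous symmetric functions: multiplying one row of the Jacobi--Trudi determinant by $p_k$, expanding, and performing column operations collapses the result into a signed sum of Jacobi--Trudi determinants for precisely the shapes $\nu$ obtained by adjoining a $k$-box border strip, the sign bookkeeping producing $(-1)^{\hht}$ while every non--border-strip shape yields a determinant with two equal rows, hence $0$. A cleaner alternative encodes a partition by its beta-numbers (an abacus / Maya diagram); under the boson--fermion correspondence $s_\lambda$ becomes a basis vector of the fermionic Fock space and multiplication by $p_k$ becomes the bosonic operator that slides a single bead forward by $k$ slots, picking up a factor $-1$ for each bead it jumps over. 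Since a forward slide of a bead by $k$ is exactly the addition of a $k$-box border strip, and the number of beads jumped equals the number of rows the strip spans minus one, this reproduces the one-step rule verbatim.

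The main obstacle is precisely this last step: verifying that the algebraic sign coming out of the determinant expansion, equivalently the sign from anticommuting a fermion past the intervening beads, is exactly $(-1)^{\hht}$ for the geometric border strip, and that shapes violating the connectedness / no-$2\times2$ condition contribute $0$. Once that sign-matching is pinned down, the passage through the Frobenius formula and the induction on the number of cycles are routine, and the rule follows.
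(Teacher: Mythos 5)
The paper does not prove this proposition at all: it is stated as the classical Murnaghan--Nakayama rule and used as background (the source of the recursion in Proposition \ref{eigformula} and of the character-polynomial manipulations), so there is no internal argument to compare yours against. Your outline is the standard symmetric-function proof: Frobenius characteristic sending $\chi_\lambda\mapsto s_\lambda$, the expansion $p_\alpha=\sum_\lambda \chi_\lambda(\alpha)s_\lambda$, and then iterating the single-strip rule $p_k s_\mu=\sum_\nu(-1)^{\hht(\nu/\mu)}s_\nu$ once per part of $\alpha$, so that the coefficient of $s_\lambda$ becomes the signed count of border-strip decompositions. That reduction and the induction on the number of parts are correct, and the remark that the order of the parts is immaterial (since the $p_k$ commute) is exactly the fact the paper exploits when it chooses to peel strips in a probabilistic or reversed order.

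The one caveat is that, as written, you have deferred essentially all of the content to the unproved one-step identity: the statement $p_k s_\mu=\sum_\nu(-1)^{\hht(\nu/\mu)}s_\nu$ \emph{is} the Murnaghan--Nakayama rule in its symmetric-function form, and everything else in your argument is bookkeeping. You name the sign-matching in the Jacobi--Trudi/Newton expansion (equivalently the bead-sliding sign on the abacus) as the main obstacle but do not carry it out, so the proposal is a correct strategy rather than a complete proof. Since the paper itself treats the rule as citable classical material, either citing the one-step rule (e.g.\ Stanley, \emph{Enumerative Combinatorics} Vol.~2, Thm.~7.17.1/7.17.3, or Macdonald) or actually performing the determinant expansion --- checking that shapes that are disconnected or contain a $2\times 2$ block produce repeated rows and vanish, and that the permutation sign equals $(-1)^{\hht}$ --- would close the gap; without one of these the argument is not self-contained.
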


\section{Monotonicity of Eigenvalues}\label{mon}

The eigenvalues of this walk show intriguing connections to the eigenvalues of the transposition walk. For example, in the transposition walk, the eigenvalues decrease according to majorization order, where $\lambda$ is smaller than $\rho$ if the blocks of $\lambda$ can be moved up and to the right to get $\rho$. Below, this pattern is shown for the eigenvalues of the involution walk when the eigenvalue pairs are restricted to any $\lambda$ and $\rho = [n-i,i]$. This monotonicity relation on eigenvalues is used in the upper bound section to get a bound for $\lambda$ with $\lambda_1 < \frac{n}{2}$. The result also gives that the likelihood order after sufficient time is the cycle lexicographical order, the same order as for the transposition walk.

This section is based upon the following recursive construction of the eigenvalues. This is derived from a probabilistic Murnaghan-Nakayama rule. In the Murnaghan-Nakayama rule, the sizes of the borderstips are the sizes of the cycles in the conjugacy class. The decomposition can be done with any ordering of the these sizes. Since the cycles decomposition of the generators of this walk are probabilistic, the ordering of the sizes can as well. The following formula comes from decompositions in which the first borderstrips in the decomposition are two $1$-cycles with probability $p$ or a two-cycle with probability $1-p$. Examining all the configurations of borderstrips that can be removed and their heights amounts to:
\begin{prop}\label{eigformula} For $\psi_\lambda$ defined in (\ref{eigf}),
\[ \psi_\lambda = \sum_{\rho: \lambda/ \rho = [2]} \psi_\rho \frac{d_\rho}{d_\lambda} + (2p-1)\sum_{\rho: \lambda/ \rho = [1,1]} \psi_\rho \frac{d_\rho}{d_\lambda} + 2p\sum_{\rho: \lambda/ \rho = [1]\cup[1]} \psi_\rho \frac{d_\rho}{d_\lambda}\]
\end{prop}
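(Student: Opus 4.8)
The plan is to derive the recursion directly from the defining formula (\ref{eigf}) by applying the Murnaghan--Nakayama rule with a carefully chosen order of borderstrip removals, matched to a ``first slot'' decomposition of the binomial weights. Write $D_\lambda := d_\lambda \psi_\lambda = \sum_{s=0}^{n/2} p^{n/2-s}(1-p)^s \binom{n/2}{s} \chi_\lambda(1^{n-2s},2^s)$. Think of a generator as chosen by $n/2$ independent slots, each an unordered pair of symbols that is either two fixed points (probability $p$) or a transposition (probability $1-p$), and expand the binomial via Pascal's rule $\binom{n/2}{s} = \binom{(n-2)/2}{s} + \binom{(n-2)/2}{s-1}$ according to whether the first slot is two fixed points or a transposition. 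Correspondingly split $D_\lambda = p\, A_\lambda + (1-p)\, B_\lambda$, where $A_\lambda$ collects the terms whose first slot is $1^2$ and $B_\lambda$ the terms whose first slot is $2^1$.

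Next I would apply Murnaghan--Nakayama to each character with the order of parts dictated by the slots: in $B_\lambda$ remove a borderstrip of size $2$ first, and in $A_\lambda$ remove two borderstrips of size $1$ first. Since the only borderstrips of size $\le 2$ are $[1]$, $[2]$, $[1,1]$ with heights $0,0,1$, the first removal in $B_\lambda$ contributes $\chi_\rho(\cdots)$ for $\lambda/\rho=[2]$ and $-\chi_\rho(\cdots)$ for $\lambda/\rho=[1,1]$, while the two successive removals in $A_\lambda$ contribute $\sum_{\lambda/\mu=[1]}\sum_{\mu/\rho=[1]}\chi_\rho(\cdots)$. After the removal, the residual sum over $s$ with its $p$, $(1-p)$ weights and $\binom{(n-2)/2}{\cdot}$ coefficients is, by (\ref{eigf}) applied to $S_{n-2}$, exactly $D_\rho = d_\rho\psi_\rho$. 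This yields $B_\lambda = \sum_{\lambda/\rho=[2]}d_\rho\psi_\rho - \sum_{\lambda/\rho=[1,1]}d_\rho\psi_\rho$ and $A_\lambda = \sum_{\lambda/\mu=[1]}\sum_{\mu/\rho=[1]}d_\rho\psi_\rho$.

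The last step is to evaluate the double single-box removal in $A_\lambda$ by grouping according to the resulting skew shape $\lambda/\rho$. A short argument shows that if two boxes removed one at a time from outer corners lie in a common row or a common column, they must be adjacent, since a remaining box strictly between them would contradict the weak monotonicity of a partition. Hence $\lambda/\rho$ is a horizontal domino $[2]$, a vertical domino $[1,1]$, or a pair of disjoint single boxes $[1]\cup[1]$, occurring with multiplicity (number of valid removal orders) $1$, $1$, and $2$ respectively. Substituting $A_\lambda = \sum_{\lambda/\rho=[2]}d_\rho\psi_\rho + \sum_{\lambda/\rho=[1,1]}d_\rho\psi_\rho + 2\sum_{\lambda/\rho=[1]\cup[1]}d_\rho\psi_\rho$ into $D_\lambda = p A_\lambda + (1-p) B_\lambda$, collecting the coefficients of each sum, and dividing through by $d_\lambda$ gives the claimed identity. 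The main obstacle is the bookkeeping in the middle step --- aligning the Pascal split of the binomial weight with the removal of the first borderstrip so that the residual weighted character sum is recognizably the $S_{n-2}$ eigenvalue --- together with the multiplicity count in the final step; the sign/height input is essentially trivial here, since every borderstrip involved has height $0$ or $1$.
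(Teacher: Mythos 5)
Your argument is correct and is essentially the paper's own proof: condition on the first matched pair (kept as a $2$-cycle with weight $1-p$ or discarded to two fixed points with weight $p$, i.e.\ the Pascal split of $\binom{n/2}{s}$), peel off the corresponding borderstrips first via Murnaghan--Nakayama, recognize the residual weighted character sum as the $S_{n-2}$ eigenvalue $d_\rho\psi_\rho$, and combine the branch coefficients $p+(1-p)=1$, $p-(1-p)=2p-1$, and $2p$ for the horizontal domino, vertical domino, and disjoint-box cases. Your explicit multiplicity count ($1,1,2$ removal orders) and the Pascal bookkeeping are just a more detailed writing-out of the steps the paper compresses into its single chain of equalities.
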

\begin{proof}
Recall, the involutions are generated by starting with a perfect matching and removing transpositions with probability $p$. In a generator the first transposition from the starting perfect matching remains with probability $1-p$ or becomes two fixed points with probability $p$. The single border strip of size one is $[1]$ while the borderstrips of size two are $[2]$ and $[1,1]$, with height $0$ and $1$ respectively. By the Murnaghan-Nakayama rule, 

\begin{align*} \psi_\lambda =& \sum_{s=0}^{n/2} \frac{\chi_\lambda(1^{n-2s},2^s)}{d_\lambda}p^{n/2-s}(1-p)^{s}{n/2 \choose s} \\
=&\sum_{s=0}^{n/2}(1-p)\sum_{\rho: \lambda/\rho = [2] \oor [1,1] } (-1)^{\sg (\lambda/\rho)}\frac{\chi_{\rho}(1^{n-2s},2^{s-1})}{d_{\rho}}\frac{d_{\rho}}{d_\lambda}p^{\frac{n-2}{2}-(s-1)}(1-p)^{s-1}{\frac{n-2}{2} \choose s-1} \\
&+ p\sum_{\lambda \subset \gamma \subset \rho, |\lambda/\gamma| =1, |\gamma/\rho|=1 }\frac{\chi_{\rho}(1^{n-2s-2},2^{s})}{d_{\rho}}\frac{d_{\rho}}{d_\lambda}p^{\frac{n-2}{2}-s}(1-p)^{s}{\frac{n-2}{2} \choose s}\\
=& \sum_{\rho: \lambda/ \rho = [2]} \psi_\rho \frac{d_\rho}{d_\lambda} + (2p-1)\sum_{\rho: \lambda/ \rho = [1,1]} \psi_\rho \frac{d_\rho}{d_\lambda} + 2p\sum_{\rho: \lambda/ \rho = [1]\cup[1]} \psi_\rho \frac{d_\rho}{d_\lambda}\end{align*}
\end{proof}

To show monotonicity conditions on the eigenvalues through the recursive definition, it will be shown that the sum of these three terms is larger for one partition than another. The monotonicity does not follow term by term, only collectively. The following observation will be useful in the arguments that follow.

\begin{prop}\label{eigmaj}
The sum of the coefficients in the expansion of $\psi$ in Proposition \ref{eigformula} decreases according to majorization order.
\end{prop}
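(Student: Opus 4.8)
The plan is to compute the sum of coefficients appearing in Proposition~\ref{eigformula} in closed form and recognize it as an affine function of the eigenvalue of the random transposition walk, which is classically monotone under majorization order (Diaconis--Shahshahani~\cite{DS}). Write
\[ A=\sum_{\rho:\ \lambda/\rho=[2]}\frac{d_\rho}{d_\lambda},\qquad B=\sum_{\rho:\ \lambda/\rho=[1,1]}\frac{d_\rho}{d_\lambda},\qquad C=\sum_{\rho:\ \lambda/\rho=[1]\cup[1]}\frac{d_\rho}{d_\lambda}, \]
so that the quantity of interest is $\Sigma_\lambda:=A+(2p-1)B+2pC$. The first step is to establish two linear identities among $A,B,C$ by running the Murnaghan--Nakayama rule with the first two boxes peeled in two different ways. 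Peeling two $1$-borderstrips one box at a time and then filling the remaining skew shape $\lambda/\rho$ in all ways reconstructs $\chi_\lambda(1^n)=d_\lambda$; a size-two skew shape equal to $[2]$ or $[1,1]$ has exactly one standard filling, whereas two disconnected boxes have two, so this yields $A+B+2C=1$ (equivalently, the branching rule $S_n\downarrow S_{n-2}$ on dimensions). Peeling a single connected $2$-borderstrip instead, which contributes sign $+1$ for $[2]$ and $-1$ for $[1,1]$, and then filling reconstructs $\chi_\lambda(2,1^{n-2})$, so $A-B=\chi_\lambda(2,1^{n-2})/d_\lambda$.

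Solving these two relations gives $B+C=\tfrac12\bigl(1-\chi_\lambda(2,1^{n-2})/d_\lambda\bigr)$, and substituting $A=1-B-2C$ shows $\Sigma_\lambda=1-2(1-p)(B+C)$, hence
\[ \Sigma_\lambda\;=\;p+(1-p)\,\frac{\chi_\lambda(2,1^{n-2})}{d_\lambda}\;=\;p+\frac{2(1-p)}{n(n-1)}\sum_{b\in\lambda}c(b), \]
using the standard evaluation of the transposition character ratio as $\tfrac{2}{n(n-1)}$ times the sum of the contents $c(b)=(\text{column of }b)-(\text{row of }b)$ over the boxes of $\lambda$ (this is the eigenvalue of the non-lazy transposition walk, and the identity follows from the Jucys--Murphy description of the sum of all transpositions). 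So $\Sigma_\lambda$ is an increasing affine function of $\sum_{b\in\lambda}c(b)$ since $1-p\ge0$, and it remains only to see that $\sum_{b\in\lambda}c(b)$ respects majorization order. For this it suffices to treat a covering pair $\mu\lessdot\lambda$ in that order (covers generate the order): then $\lambda$ arises from $\mu$ by moving the last box of some row $j$ to the end of a row $i<j$, which changes the content of that box by $(\mu_i+1-i)-(\mu_j-j)=\mu_i-\mu_j+1+(j-i)\ge 2>0$, because $\mu_i\ge\mu_j$. Thus $\sum_{b}c(b)$ strictly increases along covers, so $\Sigma_\lambda$ is nondecreasing as $\lambda$ moves up the majorization order (and constant when $p=1$), which is exactly the assertion.

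The only places needing care are the two Murnaghan--Nakayama bookkeeping steps --- above all keeping the multiplicity $2$ on the $[1]\cup[1]$ term in the dimension identity, which comes from the two orders in which the two disconnected corner boxes can be peeled --- together with the classical identification of $\chi_\lambda(2,1^{n-2})/d_\lambda$ with the content sum. Granting these, the monotonicity is immediate from the elementary content computation above. Alternatively, one can skip contents altogether and simply invoke the Diaconis--Shahshahani result that the transposition eigenvalues $\chi_\lambda(2,1^{n-2})/d_\lambda$ decrease with majorization order, since $\Sigma_\lambda=p+(1-p)\chi_\lambda(2,1^{n-2})/d_\lambda$.
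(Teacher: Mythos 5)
Your proposal is correct and follows essentially the same route as the paper: both rewrite the coefficient sum as $p+(1-p)\chi_\lambda(\tau)/d_\lambda$ using the Murnaghan--Nakayama/branching identities among the $[2]$, $[1,1]$, and $[1]\cup[1]$ removals, and then appeal to the Frobenius monotonicity of the transposition character ratio under majorization. The only difference is cosmetic: you solve two linear identities (and re-derive the Frobenius monotonicity via the content-sum/cover argument), whereas the paper regroups the terms directly and simply cites Diaconis--Shahshahani for the monotonicity.
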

\begin{proof}

\begin{align*} \psi_\lambda =& \sum_{\rho: \lambda/ \rho = [2]} \psi_\rho \frac{d_\rho}{d_\lambda} + (2p-1)\sum_{\rho: \lambda/ \rho = [1,1]} \psi_\rho \frac{d_\rho}{d_\lambda} + 2p\sum_{\rho: \lambda/ \rho = [1]\cup[1]} \psi_\rho \frac{d_\rho}{d_\lambda}\end{align*}

Examining this without the $\psi_\rho$ terms, 
\begin{align*} &\sum_{\rho: \lambda/ \rho = [2]} \frac{d_\rho}{d_\lambda} + (2p-1)\sum_{\rho: \lambda/ \rho = [1,1]} \frac{d_\rho}{d_\lambda} + 2p\sum_{\rho: \lambda/ \rho = [1]\cup[1]} \frac{d_\rho}{d_\lambda} \\
= &p \sum_{\gamma: |\lambda / \gamma| = 1, |\gamma / \rho|=1} \frac{d_\rho}{d_\lambda} + (1-p)\sum_{\rho: \lambda/\rho = [2] \oor [1,1]} (-1)^{\sg(\lambda/\rho)} \frac{d_\rho}{d_\lambda}\\
=&p + (1-p)\frac{\chi_\lambda(\tau)}{d_\lambda} \end{align*}

Where the equality is by the Murnaghan-Nakayama rule. Moreover, a classical result of Frobenius \cite{DS}, shows $\frac{\chi_\lambda(\tau)}{d_\lambda}$ decreases along majorization order. For a formula for $\frac{\chi_{\lambda}(\tau)}{d_\lambda}$ see \cite{DS}. 
\end{proof}

\begin{lemma}\label{deci}
For $p \geq \frac{1}{2}$, $\psi_{[n-i,i]}$ for $i \leq n/2$ decreases as $i$ increases.
\end{lemma}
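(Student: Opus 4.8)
The plan is to prove the lemma by induction on the even integer $n$, carrying along the auxiliary inequality $\psi_{[n/2,n/2]}\ge 0$, which is needed for the comparison at the top of the range. The claim genuinely fails for very small $n$ — for instance on $S_4$ one computes $\psi_{[3,1]}=\tfrac13(4p-1)$ and $\psi_{[2,2]}=2p^2-2p+1$, so $\psi_{[3,1]}<\psi_{[2,2]}$ for $\tfrac12\le p<\tfrac23$ — hence the induction is run for $n$ past an absolute threshold, with a few base cases settled directly or from the character polynomial, consistent with the paper's standing assumption that $n$ is large. The auxiliary bound is itself part of the induction: Proposition~\ref{eigformula} applied to $[n/2,n/2]$ collapses to
\[ \psi_{[n/2,n/2]}=\frac{d_{[n/2,n/2-2]}}{d_{[n/2,n/2]}}\,\psi_{[n/2,n/2-2]}+(2p-1)\,\frac{d_{[n/2-1,n/2-1]}}{d_{[n/2,n/2]}}\,\psi_{[n/2-1,n/2-1]}, \]
whose two coefficients are nonnegative for $p\ge\tfrac12$, while $\psi_{[n/2,n/2-2]}\ge\psi_{[n/2-1,n/2-1]}\ge 0$ by the inductive hypothesis for $S_{n-2}$.

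For the inductive step one compares $\psi_{[n-i,i]}$ with $\psi_{[n-i-1,i+1]}$ (replacing the latter by $\psi_{[n/2,n/2]}$ when $i+1=n/2$; the case $i=0$ is trivial since $\psi_{[n]}=1\ge|\psi_{[n-1,1]}|$). Expanding both by Proposition~\ref{eigformula}, the hypothesis $p\ge\tfrac12$ makes the coefficient $2p-1$ of the vertical domino nonnegative, so every coefficient is nonnegative; moreover every partition appearing is a two-row partition of $n-2$, the expansion of $\psi_{[n-i,i]}$ using only the eigenvalues $\psi_{[(n-2)-j,j]}$ with $j\in\{i-2,i-1,i\}$ and that of $\psi_{[n-i-1,i+1]}$ only $j\in\{i-1,i,i+1\}$, with coefficients explicit in $n,i,p$ through $d_{[n-j,j]}=\binom nj-\binom n{j-1}$. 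Subtracting, the goal becomes $\sum_j(c_j-c_j')\psi_{[(n-2)-j,j]}\ge 0$, for which the inductive hypothesis supplies that $\psi_{[(n-2)-j,j]}$ is nonincreasing in $j$ and nonnegative, and Proposition~\ref{eigmaj} (Frobenius' monotonicity of $\chi_\lambda(\tau)/d_\lambda$ along majorization, with $[n-i,i]$ majorizing $[n-i-1,i+1]$) supplies the conservation law $\sum_j c_j=p+(1-p)\chi_{[n-i,i]}(\tau)/d_{[n-i,i]}\ge\sum_j c_j'$.

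Turning these three inputs — inductive monotonicity, the conservation law, and the explicit coefficients — into $\sum_j(c_j-c_j')\psi_{[(n-2)-j,j]}\ge 0$ is the heart of the proof and, I expect, the main obstacle, because it does not follow from a single clean criterion. Writing each $\psi_{[(n-2)-j,j]}$ via the nonnegative increments $\psi_{[(n-2)-j,j]}-\psi_{[(n-2)-j-1,j+1]}$ and summing by parts expresses the left side as $\bigl(\sum_j c_j-\sum_j c_j'\bigr)\psi_{[(n-2)/2,(n-2)/2]}$ plus a sum over thresholds $k$ of the partial-sum differences $\sum_{j\le k}(c_j-c_j')$ against those increments; but the partial-sum differences can be negative (this occurs near $k=i-1$ when $p$ is not far from $\tfrac12$), so the naive bound fails. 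One must instead group the terms according to the position of $i$ in $[0,n/2]$, and near $i=n/2$ track the sign of the analogous boundary differences $\psi_{[m/2+1,m/2-1]}-\psi_{[m/2,m/2]}$ one level down — which are negative only at the very bottom of the induction — showing that the offending coefficients are small relative to the others once $n$ is large. In every case this reduces to a short list of explicit inequalities among the binomial coefficients $\binom nj$ with $j\approx i$, which hold for $n$ sufficiently large; the boundary comparison $\psi_{[n/2+1,n/2-1]}$ versus $\psi_{[n/2,n/2]}$ is handled the same way, only the coefficient list changing because of the vertical domino available to $[n/2,n/2]$.
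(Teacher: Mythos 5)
Your skeleton is the same as the paper's: induction on $n$ through the recursion of Proposition \ref{eigformula}, comparing the three coefficients in the expansion of $\psi_{[n-i,i]}$ against those of $\psi_{[n-i-1,i+1]}$ by Abel summation, with Proposition \ref{eigmaj} controlling the total coefficient mass. Two of your side observations are correct and genuinely useful: the lemma as literally stated does fail at $n=4$ (indeed $\psi_{[3,1]}=\tfrac13(4p-1)<p^2+(1-p)^2=\psi_{[2,2]}$ for $\tfrac12\le p<\tfrac23$), so the induction must start from a directly verified larger base case rather than $n=2$ as the paper claims; and the summation-by-parts step needs nonnegativity of the bottom eigenvalue at level $n-2$, which your auxiliary hypothesis $\psi_{[n/2,n/2]}\ge 0$ supplies and the paper leaves implicit.

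The problem is that your write-up stops exactly where the proof has to be done. The inductive step is never closed: you assert that everything "reduces to a short list of explicit inequalities among the binomial coefficients $\binom nj$ with $j\approx i$, which hold for $n$ sufficiently large," but you neither state nor verify a single one of them. For the interior comparison ($i\le n/2-2$) the needed partial-sum bound is precisely what the paper makes explicit and checks, reducing it to $p(n-i+1)\ge \frac{i}{(n-2i+1)(n-2i-1)}+1+\frac{1}{n-2i-1}+\frac{1}{n-2i+1}$ and verifying the worst case $i=n/2-2$, $p=\tfrac12$; nothing equivalent appears in your proposal, and your claim that the partial sum near $k=i-1$ goes negative for $p$ close to $\tfrac12$ is wrong in that range (e.g.\ at $n=12$, $i=4$, $p=\tfrac12$ it equals $\tfrac{35}{275}+\tfrac{75}{275}-\tfrac{75}{297}\approx 0.15>0$). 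Where dominance genuinely fails is only the top comparison $[n/2+1,n/2-1]$ versus $[n/2,n/2]$: there the partial sum at the $[n/2,n/2-2]$ threshold is negative (at $n=12$, $p=\tfrac12$ it is $\tfrac{75}{297}+\tfrac{90}{297}-\tfrac{90}{132}\approx -0.13$), so monotonicity, nonnegativity and the total-sum comparison at level $n-2$ are provably insufficient there, and some quantitative information about level $n-2$ (a lower bound on the gap $\psi_{[(n-2)/2+1,(n-2)/2-3]}-\psi_{[(n-2)/2,(n-2)/2-2]}$, or on $\psi_{[(n-2)/2-1,(n-2)/2-1]}$ relative to its neighbors) must be formulated as part of a strengthened inductive hypothesis and carried through. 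Your plan gestures at this ("track the sign one level down," "the offending coefficients are small once $n$ is large") but never says what the strengthened hypothesis or the resulting inequalities are, so the heart of the lemma is missing. (Your instinct that this boundary case is the real obstacle is sound: the paper's own Case 2 inequality is false as displayed, its left side tending to $\tfrac23$ while its right side tends to $\tfrac34$, so this case does require new quantitative input rather than bookkeeping; but identifying the obstacle is not the same as overcoming it.)
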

\begin{proof}
Using the formula \[ \psi_\lambda = \sum_{\rho: \lambda/ \rho = [2]} \psi_\rho \frac{d_\rho}{d_\lambda} + (2p-1)\sum_{\rho: \lambda/ \rho = [1,1]} \psi_\rho \frac{d_\rho}{d_\lambda} + 2p\sum_{\rho: \lambda/ \rho = [1]\cup[1]} \psi_\rho \frac{d_\rho}{d_\lambda} \]
by induction on $n$, it will follow that $\psi_{[n-i,i]} \geq \psi_{[n-i-1,i+1]}$. The base case of $n=2$ has $\psi_{[2]}=1 \geq \psi_{[1,1]} = 2p-1$, since $\psi_{[1,1]} = p\frac{\chi_{[1,1]}(1^2)}{d_{[1,1]}} + (1-p)\frac{\chi_{[1,1]}(2)}{d_{[1,1]}} = p - (1-p) = 2p-1$. 

Fix $n$ and assume the lemma holds for partitions of $n-2$. For $i \leq \frac{n}{2} -1$ the eigenvalue decomposes using the hook length formula to:

\begin{align*} \psi_{[n-i,i]} =& \frac{(n-i+1)_2}{(n)_2}\left(1 - \frac{2}{n-2i+1}\right)\psi_{[n-i-2,i]} + \frac{(i)_2}{(n)_2}\left(1 + \frac{2}{n-2i+1}\right)\psi_{[n-i,i-2]} \\
&+ p \frac{2i(n-i+1)}{(n)_2}\psi_{[n-i-1,i-1]} \end{align*}

The sizes of these three eigenvalues for the involution walk on $S_{n-2}$ and their coefficients will be compared to those appearing for the partition $[n-i-1,i+1]$.

Case 1: For $i \leq n/2-2$, for the eigenvalue for the partition $[n-i-1,i+1]$ this becomes,
\begin{align*} \psi_{[n-i-1,i+1]} =&\frac{(n-i+2)_2}{(n)_2}\left(1 - \frac{2}{n-2i-1}\right)\psi_{[n-i-3,i+1]} + \frac{(i+1)_2}{(n)_2}\left(1 + \frac{2}{n-2i-2}\right)\psi_{[n-i-1,i-1]} \\ &+ p \frac{2(i+1)(n-i)}{(n)_2}\psi_{[n-i-2,i]} \end{align*}

Two of the eigenvalues of $n-2$ that appear in $[n-i,i]$ are larger by induction than those in $[n-i-1,i+1]$. The exception being $[n-i-2,i]$ in the decomposition of $[n-i,i]$ has a longer second row than $[n-i-1,i-1]$ in the decomposition of $[n-i-1,i+1]$, which means by induction that its a larger eigenvalue. This means its enough to check the coefficients of $[n-i,i-2]$ and $[n-i-1,i-1]$ from $[n-i,i]$ are larger than the coefficient of $[n-i-1,i-1]$ from $[n-i-1,i+1]$, and that the sum of all three coefficients in the $[n-i,i]$ expression are larger than those in the $[n-i-1,i-1]$. The latter holds by Proposition \ref{eigmaj}. For the former this amounts to:

\[ i(i+1) + \frac{2i(i+1)}{n-2i-1} \leq i(i-1) + \frac{2i(i-1)}{n-2i+1} + 2p(n-i+1)i \]

Which simplifies to $p(n-i+1) \geq \frac{i}{(n-2i+1)(n-2i-1)} + 1 + \frac{1}{n-2i-1} + \frac{1}{n-2i+1}$

The left side is decreasing with $i$, while all terms on the right increase with $i$, so its enough to consider $i = n/2-2$ and $p=\frac{1}{2}$, in which case it is true that,

\[ \frac{1}{2}\left(\frac{n}{2} + 3\right) \geq \frac{\frac{n}{2} -2}{15} + 1 + \frac{1}{3} + \frac{1}{5}\]

Case 2: This leaves $i = n/2-1$ and $i+1 = n/2$, where,
\[\psi_{[n/2+1,n/2-1]} = \frac{5}{12}\frac{(n-2)(n-4)}{(n)_2}\psi_{[n/2+1,n/2-3]} + \frac{1}{2}p \frac{(n-2)(n+4)}{(n)_2}\psi_{[n/2,n/2-2]} + \frac{1}{12}\psi_{[n/2-1,n/2-1]}\]
\[ \psi_{[n/2,n/2]} = \psi_{[n/2,n/2-2]}\frac{3}{4}\left(1 - \frac{1}{n-1}\right) + \psi_{[n/2-1,n/2-1]}(2p-1)\frac{1}{4}\left(1 + \frac{3}{n-1}\right)\]
In the terms above, the $[n/2+1,n/2-3]$ and $[n/2,n/2-2]$ in $\psi_{[n/2+1,n/2-1]}$ overcomes the analogous term $[n/2,n/2-2]$ in $\psi_{[n/2,n/2]}$ for $p \geq \frac{1}{2}$ since,  \[\frac{5}{12}\frac{(n-2)(n-4)}{(n)_2} + \frac{1}{4} \frac{(n-2)(n+4)}{(n)_2} \geq \frac{3}{4}(1 - \frac{1}{n-1})\] 

This shows the the sum of the coefficients of the first two terms in the decomposition of $[n/2+1,n/2-1]$ is larger than the sum of the first coefficient in the decomposition of $[n/2,n/2]$. This was done with the two terms with longer first rows than the one term, $[n/2,n/2-2]$. It remains to show that the sum of all three coefficients in $[n/2+1,n/2-1]$ is greater than the sum of both coefficients in $[n/2,n/2]$. This once again follows by Proposition \ref{eigmaj}.

\end{proof}

\begin{lemma}\label{twopartbiggest}
For all $\lambda$ with $\lambda_1 = n-i$, $p \geq \frac{1}{2}$, \[ \psi_\lambda \leq \psi_{[n-i,i]}\]
\end{lemma}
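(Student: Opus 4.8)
The plan is to prove Lemma~\ref{twopartbiggest} by induction on $n$, using the recursion of Proposition~\ref{eigformula} together with Lemma~\ref{deci} and Proposition~\ref{eigmaj}, following the pattern of the proof of Lemma~\ref{deci}. Fix $\lambda\vdash n$ with $\lambda_1=n-i$; since $[n-i,i]$ is required to be a partition we have $i\le n/2$, and we may assume $\lambda$ has at least three rows, as otherwise $\lambda=[n-i,i]$ and there is nothing to prove. The base cases (small $n$) are direct.

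The key structural observation is that the inductive hypothesis, applied at $n-2$, matches each term in the expansion of $\psi_\lambda$ to a term in the expansion of $\psi_{[n-i,i]}$. Expanding $\psi_\lambda$ by Proposition~\ref{eigformula}, every $\rho$ that occurs is a partition of $n-2$ obtained from $\lambda$ by deleting a horizontal domino, a vertical domino, or two non-adjacent corner cells, so $\rho_1\in\{n-i,\,n-i-1,\,n-i-2\}$ according to how many cells were removed from the first row. If $\rho_1=n-i$ then $\rho$ has exactly $i-2$ cells below its first row, so the inductive hypothesis gives $\psi_\rho\le\psi_{[n-i,i-2]}$; likewise $\rho_1=n-i-1$ gives $\psi_\rho\le\psi_{[n-i-1,i-1]}$ and $\rho_1=n-i-2$ gives $\psi_\rho\le\psi_{[n-i-2,i]}$. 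These three two-row eigenvalues of $n-2$ are exactly the terms in the expansion of $\psi_{[n-i,i]}$, with $\psi_{[n/2-1,n/2-1]}$ an additional term when $i=n/2$ (the analogue of Case~2 of Lemma~\ref{deci}, treated separately). Since every coefficient in Proposition~\ref{eigformula} is nonnegative for $p\ge\tfrac12$, this yields $\psi_\lambda\le A\,\psi_{[n-i,i-2]}+B\,\psi_{[n-i-1,i-1]}+C\,\psi_{[n-i-2,i]}$, where $A,B,C\ge 0$ are the total coefficient masses falling into the three buckets.

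It then remains to compare this bound with $\psi_{[n-i,i]}=A'\psi_{[n-i,i-2]}+B'\psi_{[n-i-1,i-1]}+C'\psi_{[n-i-2,i]}$, the coefficients $A',B',C'$ being the explicit dimension ratios appearing in the displayed formula in the proof of Lemma~\ref{deci}. By Lemma~\ref{deci} the three eigenvalues are linearly ordered, $\psi_{[n-i,i-2]}\ge\psi_{[n-i-1,i-1]}\ge\psi_{[n-i-2,i]}$, and the smallest, $\psi_{[n-i-2,i]}$, is nonnegative for $i\le(n-2)/2$ since Lemma~\ref{deci} bounds it below by the central two-row eigenvalue of $n-2$ (whose nonnegativity is folded into the same induction). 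Summing each side by parts in order of decreasing eigenvalue then reduces the desired inequality to the three partial-sum inequalities $A\le A'$, $A+B\le A'+B'$, and $A+B+C\le A'+B'+C'$. The last is exactly Proposition~\ref{eigmaj} applied to $\lambda$ and $[n-i,i]$ (which majorizes $\lambda$), and the first two reduce, after clearing denominators by the hook length formula, to finite elementary inequalities in $i,n,p$ of the same type verified explicitly at the end of the proof of Lemma~\ref{deci}.

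The main obstacle is this last bookkeeping step. As in Lemma~\ref{deci}, the target inequality is false term by term and only becomes true after the coefficient mass is allocated correctly, so the work lies in (i) establishing the partial-sum coefficient inequalities $A\le A'$ and $A+B\le A'+B'$ for all admissible $\lambda$ — this requires controlling how much total coefficient mass can accumulate in the $\rho_1=n-i$ and $\rho_1=n-i-1$ buckets over all $\lambda$ with $\lambda_1=n-i$, the extremal case being $\lambda=[n-i,i]$ itself, where equality holds — and (ii) handling the boundary regimes $i$ near $n/2$ and $\lambda$ with only three rows, where the set of removable border strips, and hence the buckets, degenerate and must be checked by hand, exactly as Lemma~\ref{deci} splits off its Case~2.
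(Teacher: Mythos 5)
Your outline matches the paper's strategy almost exactly: induct on $n$, expand $\psi_\lambda$ by Proposition~\ref{eigformula}, bucket the resulting partitions of $n-2$ by their first-row length $n-i$, $n-i-1$, $n-i-2$, bound each bucket by the corresponding two-row eigenvalue via the inductive hypothesis, order those three eigenvalues by Lemma~\ref{deci}, and reduce to prefix-sum inequalities on the coefficient masses, the full-sum inequality being Proposition~\ref{eigmaj}. But the proof stops exactly where the real work begins. The two nontrivial coefficient inequalities ($A\le A'$ and $A+B\le A'+B'$ in your notation) are not ``finite elementary inequalities in $i,n,p$ of the same type verified at the end of Lemma~\ref{deci}.'' In Lemma~\ref{deci} both partitions being compared have two rows, so every coefficient is an explicit rational function of $n$, $i$, $p$ via the hook length formula. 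Here $A$, $B$, $C$ depend on the entire shape of $\lambda$ below its first row, and the inequalities must hold uniformly over \emph{all} partitions with $\lambda_1=n-i$. What is needed is an extremal argument over that family, and this is precisely the content of the paper's proof: (i) the total weight of all two-box removals strictly below the first row of $\lambda$ is at most the single coefficient $d_{[n-i,i-2]}/d_{[n-i,i]}$, shown by factoring each dimension ratio through $\frac{(i)_2}{(n)_2}\cdot\frac{\prod h_{1,j}}{\prod h_{1,j}'}\cdot\frac{d_\rho}{d_{\lambda/\lambda_1}}$, bounding the sum of the last factors by $1$ (the $p=1$ case of Proposition~\ref{eigmaj} applied to $\lambda/\lambda_1$), and observing that $[n-i,i]$ maximizes the first-row hook ratio; and (ii) the coefficient of the horizontal-domino removal from the first row, $\prod_j\bigl(1-2/h_{1,j}\bigr)^{-1}/(n)_2$, is \emph{minimized} at $[n-i,i]$, because $\prod_j\bigl(1-2/h_{1,j}\bigr)$ is maximized when the first-row hook lengths are as equal as the constraint $\sum_k\lambda_k'=n$ allows, which forces $\lambda=[n-i,i]$. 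Without these two extremal facts the prefix-sum inequalities are unsupported, so the argument as written has a genuine gap.

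A smaller strategic remark: the paper does not prove your middle inequality $A+B\le A'+B'$ directly. It proves the reverse inequality on the last bucket, $C\ge C'$ (fact (ii) above), and then obtains $A'+B'\ge A+B+C-C'\ge A+B$ from the full-sum inequality of Proposition~\ref{eigmaj}. That detour is what makes the bookkeeping feasible; attacking $A+B\le A'+B'$ head-on would again require controlling the $2p$-weighted mass of one-box-from-the-first-row removals over all shapes of $\lambda/\lambda_1$, which you have not done. Your appeal to nonnegativity of $\psi_{[n-i-2,i]}$ (needed for the Abel summation) is fine in substance --- for $p\ge\tfrac12$ all eigenvalues are nonnegative since the recursion has nonnegative coefficients and nonnegative base cases --- but as stated (``folded into the same induction'') it should be made explicit. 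The boundary case $i=n/2$, where the vertical-domino term with weight $2p-1$ appears in $\psi_{[n/2,n/2]}$, is correctly flagged but likewise left unproved.
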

\begin{proof}
The proof will follow by induction on $n$. The base case for $n=2$ is trivial as $[2]$ and $[1,1]$ are the only partitions with their first rows. Suppose it holds for all $i$ for $n-2$. If $i < \frac{n}{2}$, no vertical removals per Murnaghan-Nakayama are possible from the first row and, 

\[ \psi_{[n-i,i]} = \frac{d_{[n-i-2,i]}}{d_{[n-i,i]}}\psi_{[n-i-2,i]} + \frac{d_{[n-i,i-2]}}{d_{[n-i,i]}}\psi_{[n-i,i-2]} + 2p \frac{d_{[n-i-1,i-1]}}{d_{[n-i,i]}}\psi_{[n-i-1,i-1]} \]

There are three cases of lengths of first row this can generate: $n-i$,$n-i-1$ and $n-i-2$. Call the coefficients in the decomposition of $[n-i,i]$ of these terms $a_{n-i}$,$a_{n-i-1}$ and $a_{n-i-2}$. Let $\lambda = [n-i,...]$ be another partition. Let $b_{n-i}$ be the sum of cofficients of any $[n-i,...]$ in its decomposition, similarly define $b_{n-i-1}$ and $b_{n-i-2}$. By Lemma \ref{deci}, the corresponding eigenvalues increase as the first row increases. It needs to be shown that $a_{n-i} \geq b_{n-i}$, $a_{n_i} + a_{n-i-1} \geq b_{n-i} + b_{n-i-1}$ and finally $a_{n-i} + a_{n-i-1} + a_{n-i-2} \geq b_{n-i} + b_{n-i-1} + b_{n-i-2}$. This last equation holds by Proposition \ref{eigmaj}. The strategy to show the other is to first show that $a_{n-i} \geq b_{n-i}$; in other words that a removal all from below the first row is more likely $[n-i,i]$ than $\lambda$. The last inequality is shown in indirectly, by finding that $a_{n-i-2} \leq b_{n-i-2}$; the probability of a $2$ cycle removal from the first row, which gives the shortest first row, is more likely than the same removal in $[n-i,i]$. Then $a_{n-i} + a_{n-i-1} \geq b_{n-i} + b_{n-i-1} + b_{n-i-2} - a_{n-i-2} \geq b_{n-i} + b_{n-i-1}$.

Consider removing two blocks from below the first row. This effects at most two hook lengths from the first row. The smallest such hook lengths its possible to effect occur in $[n-i,i]$, causing the largest increase to the ratio of old versus new contributions of the first row. Let $h_{1,j}'$ denote the new hook lengths of the first row. Let $\rho$ be a partition of $i-2$ obtained by removing the first row of $\lambda$ and two additional squares. Then,
\[ \frac{d_{n-i,\rho}}{d_{n-i,\lambda/\lambda_1}} = \frac{\frac{(n-2)_{\lambda_1}}{\prod h_{1,j}'} d_\rho}{ \frac{(n)_{\lambda_1}}{\prod h_{1,j}} d_{\lambda/\lambda_1}} = \frac{(i)_2}{(n)_2} \frac{\prod h_{1,j}}{\prod h_{1,j}'}\frac{d_\rho}{d_{\lambda/\lambda_1}} \]
When $p=1$, the sum over all such $\rho$ of $\frac{d_\rho}{d_{\lambda/\lambda_1}}$ is $1$. For $[n-i,i]$ this is always $1$. As observed, $[n-i,i]$ also maximizes $\frac{\prod h_{1,j}}{\prod h_{1,j}'}$. This gives that the coefficient of $[n-i,i-2]$ is larger than the sum of coefficients for all two block removals below the first row of $\lambda$.

For both $\lambda$ and $[n-i,i]$ there is exactly one way to remove $2$ blocks from the first row. It must be shown that $\frac{d_{[n-i-2,\lambda/\lambda_1]}}{d_\lambda} \geq \frac{d_{[n-i-2,i]}}{d_{[n-i,i]}}$. 
\begin{align} \label{casen-i-2}\frac{d_{[n-i-2,\lambda/\lambda_1]}}{d_\lambda} = \frac{2\prod_{j=1}^{n-i-2} \frac{h_{1,j}}{h_{1,j}-2}}{(n)_2} = \frac{\prod_{j=1}^{n-i-2} \left(1- \frac{2}{h_{1,j}}\right)^{-1}}{(n)_2}\end{align}
Consider  $\prod_{j=1}^{n-i-2} \left(1- \frac{2}{h_{1,j}}\right)$. Where $h_{1,j} = n-i-2 -j + \lambda_j'$, with $\sum \lambda_j' = n-2$ and $\lambda_j$ decreasing. This gives an optimization problem with bounded region and linear constraints. The maximal solution without the bounded region is to have all $h_{1,j}$ be equal. Given the constraints, the optimal solution is to take the $h_{1,j}$ as close to equal as possible giving $[n-i,i]$. This product is maximized at $[n-i,i]$, which in turn minimizes the ratio in \ref{casen-i-2}. Therefore, the coefficient of $[n-i-2,i]$ is smaller than that of a two block removal from the first row of any other $\lambda$ with $\lambda_1 = n-i$.

When $i= \frac{n}{2}$,
\[ \psi_{[n/2,n/2]} = \psi_{[n/2,n/2-2]}\frac{d_{[n/2,n/2-2]}}{d_{[n/2,n/2]}} + (2p-1)\psi_{[n/2-1,n/2-1]}\frac{d_{[n/2-1,n/2-1]}}{d_{[n/2,n/2]}}\]
The argument above for removing two blocks below the first row holds to show $\frac{d_{[n/2,n/2-2]}}{d_{[n/2,n/2]}}$ is larger than the probability $\lambda$ does the same. The probability $[n/2,n/2]$ decomposes to at least $[n/2-1,n/2-1]$ is larger than for any other $\lambda = [n/2,...]$ by Proposition \ref{eigmaj}.
\end{proof}

\begin{theorem}\label{n2bound}
For $p \geq \frac{1}{2}$ and $\lambda$ such that $\lambda_1' \leq \lambda_1 < \frac{n}{2}$, $\psi_{\lambda} \leq \psi_{[n/2,n/2]}$
\end{theorem}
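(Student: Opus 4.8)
The plan is to reduce the bound for an arbitrary $\lambda$ with $\lambda_1' \leq \lambda_1 < n/2$ to the already-established Lemma~\ref{twopartbiggest}, which handles the case of two-row partitions. Lemma~\ref{twopartbiggest} gives $\psi_\lambda \leq \psi_{[n-i,i]}$ whenever $\lambda_1 = n-i$; but here $\lambda_1 < n/2$ means $i = n - \lambda_1 > n/2$, so $[n-i,i]$ is not a valid partition (the second part exceeds the first). So the first step is to observe that for the partitions under consideration we cannot directly quote that lemma, and instead we must argue through the transpose or through a separate monotonicity statement. The natural move: since $\lambda_1' \leq \lambda_1$, the partition is ``wider than tall,'' and we want to compare $\psi_\lambda$ with the eigenvalue of the most balanced two-row shape $[n/2,n/2]$. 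I would set $j = \lambda_1$ and first apply Lemma~\ref{twopartbiggest} in the form $\psi_\lambda \leq \psi_{[\lambda_1, n - \lambda_1]}$ only when $\lambda_1 \geq n/2$; since that fails here, the real content is a comparison purely among two-row partitions plus one invocation of Lemma~\ref{twopartbiggest}.

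Here is the cleaner route I would actually pursue. Because $\lambda_1 < n/2$, consider the partition $\mu$ obtained from $\lambda$ by the same first-row/second-row bookkeeping used in Lemma~\ref{twopartbiggest}: we want to show $\psi_\lambda$ is dominated by $\psi_{[n-k,k]}$ for an appropriate $k \leq n/2$ with $n - k \geq \lambda_1$. But Lemma~\ref{twopartbiggest} requires the \emph{same} first row. So instead I would chain: $\psi_\lambda \leq \psi_{[\lambda_1, \lambda_2, \lambda_3, \dots]}$ and use Proposition~\ref{eigformula} to peel off boxes, showing at each stage that flattening the partition toward $[n/2,n/2]$ only increases the eigenvalue. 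Concretely, I expect the argument to go: by Lemma~\ref{twopartbiggest}, among all partitions with first row exactly $\lambda_1$, the two-row partition $[\lambda_1, n - \lambda_1]$ has the largest eigenvalue — \emph{but} this requires $n - \lambda_1 \leq \lambda_1$, i.e. $\lambda_1 \geq n/2$, which is exactly the opposite of our hypothesis. So the honest statement is that when $\lambda_1 < n/2$ one cannot straighten to two rows with first row $\lambda_1$; one must also move a box up. The fix is to use Lemma~\ref{deci}: $\psi_{[n-i,i]}$ is decreasing in $i$ for $i \leq n/2$, equivalently $\psi_{[n-i,i]}$ is \emph{increasing} as the first row $n-i$ grows up to the balanced point. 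Combining: I would show $\psi_\lambda \leq \psi_{[n-\lambda_1, \lambda_1]}$ is false-direction, and instead prove $\psi_\lambda \leq \psi_{[n/2,n/2]}$ by noting $[n/2,n/2]$ is reachable from the ``profile'' of $\lambda$ via moves that Propositions~\ref{eigformula}--\ref{eigmaj} and Lemmas~\ref{deci}--\ref{twopartbiggest} certify are eigenvalue-non-decreasing.

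The key steps in order: (1) Since $\lambda_1' \leq \lambda_1$, argue we may assume $\lambda$ has at most two rows after a reduction — or rather, apply Lemma~\ref{twopartbiggest} to get $\psi_\lambda \leq \psi_{[\lambda_1, n-\lambda_1]}$ \emph{only after} first noting that if $\lambda_1 < n/2$ then we instead bound by the conjugate-balanced shape; the right intermediate is $[n-\lambda_1', \lambda_1']$ or similar. (2) Use Lemma~\ref{deci} to slide the two-row partition to $[n/2,n/2]$, which is the maximum over $i \leq n/2$ of $\psi_{[n-i,i]}$ — wait, Lemma~\ref{deci} says $\psi_{[n-i,i]}$ decreases as $i$ increases, so the maximum is at $i=0$, the trivial rep; so $[n/2,n/2]$ is the \emph{minimum} among two-row shapes, which is what makes it a good upper bound for everything with $\lambda_1 < n/2$. (3) So the real claim: any $\lambda$ with $\lambda_1 < n/2$ and $\lambda_1' \leq \lambda_1$ has $\psi_\lambda \leq \psi_{[n/2,n/2]}$, proved by induction via Proposition~\ref{eigformula}, checking the three-term recursion flattens toward $[n/2,n/2]$. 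The main obstacle I anticipate is exactly handling the boundary: partitions whose first row is close to $n/2$ but whose shape is not two rows, where peeling a border strip can temporarily produce a partition violating $\lambda_1' \leq \lambda_1$ or landing outside the inductive hypothesis; controlling the coefficients in that regime (as in Case 2 of Lemma~\ref{deci}) will require the same delicate $p \geq 1/2$ coefficient inequalities, and that is where the bulk of the work sits.
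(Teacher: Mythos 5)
Your proposal correctly identifies the toolkit (the recursion of Proposition~\ref{eigformula}, the monotonicity statements in Lemmas~\ref{deci} and~\ref{twopartbiggest}, Proposition~\ref{eigmaj}, and induction on $n$), and you are right that Lemma~\ref{twopartbiggest} cannot be applied directly when $\lambda_1 < n/2$. But as written this is a plan, not a proof, and the step you defer (``that is where the bulk of the work sits'') is precisely the content of the theorem. Moreover, the mechanism you sketch --- flatten $\lambda$ toward $[n/2,n/2]$ through a chain of moves within partitions of $n$, each certified eigenvalue-non-decreasing --- would require a general majorization-type monotonicity of $\psi$ over \emph{all} partitions, which the paper never establishes; its monotonicity results are only for the restricted families in Lemmas~\ref{deci} and~\ref{twopartbiggest}, and Proposition~\ref{eigmaj} controls only coefficient sums, not eigenvalues. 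So the chain of ``certified moves'' you invoke does not exist in the paper's machinery and is not supplied by your argument.

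The paper's actual proof avoids this by a single application of the recursion, dropping from $n$ to $n-2$: since $p \geq \tfrac12$, all coefficients in Proposition~\ref{eigformula} are nonnegative; every $\rho$ appearing has $\rho_1 \leq \lambda_1 \leq \tfrac{n-2}{2}$, so $\psi_\rho \leq \psi_{[(n-2)/2,(n-2)/2]}$ by the inductive hypothesis (for $\rho_1' \leq \rho_1 < \tfrac{n-2}{2}$) together with Lemma~\ref{twopartbiggest} (for $\rho_1 = \tfrac{n-2}{2}$); hence $\psi_\lambda$ is at most the coefficient sum times $\psi_{[(n-2)/2,(n-2)/2]}$. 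By Proposition~\ref{eigmaj} that coefficient sum is at most the one for $[n/2,n/2]$, namely $\tfrac34\bigl(1-\tfrac{1}{n-1}\bigr) + \tfrac{2p-1}{4}\bigl(1+\tfrac{3}{n-1}\bigr)$, and since $\psi_{[n/2,n/2]}$ itself expands as exactly this total coefficient spread over $\psi_{[n/2,n/2-2]}$ and $\psi_{[n/2-1,n/2-1]}$, with $\psi_{[n/2,n/2-2]} \geq \psi_{[n/2-1,n/2-1]}$ by Lemma~\ref{deci}, one gets $\bigl(\text{coefficient sum}\bigr)\cdot\psi_{[(n-2)/2,(n-2)/2]} \leq \psi_{[n/2,n/2]}$. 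This lifting step comparing the $n-2$ bound to the explicit two-term expansion of $\psi_{[n/2,n/2]}$ is the missing idea in your proposal, as is the base case: the first $n$ for which such a $\lambda$ exists is $n=8$ with $\lambda=[3,3,2]$, which is checked directly from the character table of $S_8$.
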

\begin{proof}
By Proposition \ref{eigmaj} $[n/2,n/2]$ has the maximal value of the sum of coefficients in the formula from Proposition \ref{eigformula} over all $\lambda$ with $\lambda_1,\lambda_1'\leq n/2$ with the value $\frac{3}{4}(1 - \frac{1}{n}) + \frac{1}{4}(2p-1)(1 + \frac{3}{n-1})$. In the case that $p=\frac{1}{2}$ this value is $\frac{3}{4}(1 - \frac{1}{n-1})$. This gives a heuristic for why the eigenvalue of $[n/2,n/2]$ will be computed when $p=\frac{1}{2}$ as roughly $(3/4)^{n/2}$.

The bound on $\psi_\lambda$ is by induction. Since $p \geq \frac{1}{2}$ the only possibly negative terms in the expansion \[\psi_\lambda = \sum_{\rho: \lambda/ \rho = [2]} \psi_\rho \frac{d_\rho}{d_\lambda} + (2p-1)\sum_{\rho: \lambda/ \rho = [1,1]} \psi_\rho \frac{d_\rho}{d_\lambda} + p\sum_{\rho: \lambda/ \rho = [1]\cup[1]} \psi_\rho \frac{d_\rho}{d_\lambda}\] are the $\psi_\rho$. An upper bound on $|\psi_\rho|$ suffices to pull it in front of the expression.

The first $n$ with partition $\lambda$ with $\lambda_1' \leq \lambda_1 < \frac{n}{2}$ existing is $n=8$, with a single partition $\lambda = [3,3,2]$. $\psi_{[3,3,2]}\leq \psi_{[4,4]}$ as for each $s$, $0 \leq \frac{\chi_{[3,3,2]}(2^s)}{d_{[3,3,2]}} \leq \frac{\chi_{[4,4]}(2^s)}{d_{[4,4]}}$. This can be seen in the character table of $S_8$ \cite{Littlewood}. Then assume by induction the bound holds for $\rho$ a partition of $n-2$ with $\rho_1' \leq \rho_1 < \frac{n-4}{2}$, and it was proven before for $\rho_1 \leq \frac{n-2}{2}$ as well. The $\rho$'s that appear in the expression have $\rho_1 \leq \lambda_1 \leq \frac{n-2}{2}$, so $\psi_\rho \leq \psi_{[(n-1)/2,(n-1)/2]}$. Finally, $\left(\frac{3}{4}(1 - \frac{1}{n-1}) + (2p-1)\frac{1}{4}(1 + \frac{3}{n-1})\right)\psi_{[(n-1)/2,(n-1)/2} \leq \psi_{[n/2,n/2]}$ since \[ \psi_{[n/2,n/2]} = \psi_{[n/2,n/2-2]}\frac{d_{[n/2,n/2-2]}}{d_{[n/2,n/2]}} + \psi_{[n/2-1,n/2-1]}(2p-1)\frac{d_{[n/2-1,n/2-1]}}{d_{[n/2,n/2]}}\] And, since $\frac{d_{[n/2,n/2-2]}}{d_{[n/2,n/2]}} + (2p-1)\frac{d_{[n/2-1,n/2-1]}}{d_{[n/2,n/2]}} = \frac{3}{4}(1 - \frac{1}{n-1}) + (2p-1)\frac{1}{4}(1 + \frac{3}{n-1})$ and $\psi_{[n/2,n/2-2]} \geq \psi_{[n/2-1,n/2-1]}$ by Lemma \ref{deci}. Therefore, $\psi_\lambda \leq \psi_{[n/2,n/2]}$.

\end{proof}

These monotonicity relations on the eigenvalues, which will be used to form a upper bound on the mixing time of the walk, also show a quite granular structure inside of the mixing. After many steps, the walk will approach being uniform on all permutations, but some permutations will always be more likely than others. The likelihood order for the walk is a total order that describes the relative likelihood of the permutations. For the $p \geq \frac{1}{2}$ the transposition walk and involution walk, the likelihood order after sufficient time is the cycle lexicographic order on permutations defined as:

\begin{definition}\label{CLorder} Let $\alpha=(1^{a_1},2^{a_2},...,n^{a_n})$ where $\alpha$ as $a_i$ $i$-cycles, $\beta = (1^{b_1},2^{b_2},...,n^{b_n})$ be two conjugacy classes of $S_n$. Define the cycle lexicographic order to be $\alpha >_{CL} \beta$ when for $\min_k(a_k \neq b_k) = i$, $a_i > b_i$. 
\end{definition}

\begin{corollary}\label{lhoiw}
The likelihood order for this walk for $p \geq \frac{1}{2}$ and for $t$ sufficiently large is the cycle lexicographical order.
\end{corollary}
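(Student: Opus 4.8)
The plan is to derive the likelihood order from the eigenvalue monotonicity results already in hand, following the strategy sketched in the introduction and pioneered by Lulov. By the discrete Fourier inversion formula, for two permutations $\sigma, \pi$,
\[ K^{*t}(\sigma) - K^{*t}(\pi) = \frac{1}{n!}\sum_{\lambda \neq 1} d_\lambda \psi_\lambda^t \left( \chi_\lambda(\sigma) - \chi_\lambda(\pi) \right). \]
For $t$ sufficiently large, the sign of the right-hand side is governed by the term with the largest $|\psi_\lambda|$ for which the coefficient $d_\lambda(\chi_\lambda(\sigma) - \chi_\lambda(\pi))$ is nonzero; so I need (i) an ordering of the eigenvalues by magnitude, at least enough to identify which $\lambda$ "wins" for any given pair of conjugacy classes, and (ii) control of the sign of the character difference at that winning $\lambda$. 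Since the walk is a class function, $K^{*t}(\sigma)$ depends only on the conjugacy class of $\sigma$, so the order on permutations is really an order on conjugacy classes, matching Definition \ref{CLorder}.

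First I would establish the relevant part of the eigenvalue magnitude order: the eigenvalues decrease (in a suitable sense) along majorization/dominance order, with $\psi_{[n]} = 1$ strictly largest, $\psi_{[n-1,1]}$ the next relevant one, and so on. Lemma \ref{deci} gives monotonic decay of $\psi_{[n-i,i]}$ in $i$, Lemma \ref{twopartbiggest} shows the two-row partitions dominate within a fixed first row, Theorem \ref{n2bound} bounds everything with $\lambda_1 < n/2$ by $\psi_{[n/2,n/2]}$, and Proposition \ref{eigmaj} gives the majorization-consistent decay of the coefficient sums; together these are enough to pin down, for each $k$, that the hook-shaped (or near-hook) partitions with the longest first row carry the dominant eigenvalues, and in particular that the ordering of the $\psi_\lambda$ refines dominance order at the top. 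Next, for two conjugacy classes $\alpha = (1^{a_1},2^{a_2},\dots)$ and $\beta = (1^{b_1},2^{b_2},\dots)$ with $\min_k(a_k \neq b_k) = i$ and $a_i > b_i$, I would identify the smallest $\lambda$ (in dominance order, hence largest $\psi_\lambda$) at which $\chi_\lambda(\alpha) \neq \chi_\lambda(\beta)$, and use the character polynomial: $\chi_\lambda$ expressed as a polynomial in the cycle counts $a_1, a_2, \dots$ has, for $\lambda$ close to $[n]$, a leading behaviour dominated by the lowest-index cycle counts, so that $\chi_\lambda(\alpha) - \chi_\lambda(\beta)$ first becomes nonzero at a partition whose shape "detects" the coordinate $i$, and there it has the sign of $a_i - b_i > 0$. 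This is exactly the computation Lulov carried out for the transposition walk; since the involution walk has the same likelihood order, I would either adapt his argument or cite \cite{Mythesis,Lulov} for the character-polynomial sign computation and emphasize that the only new input needed is that the involution-walk eigenvalues obey the same top-of-the-order magnitude comparisons, which is precisely what Section \ref{mon} supplies.

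The main obstacle is making the phrase "for $t$ sufficiently large the dominant term controls the sign" fully rigorous and uniform: one must rule out the possibility that the dominant eigenvalue is negative and of the same magnitude as a competitor (a parity issue — cf. Proposition \ref{tinyp} — which for $p \geq \tfrac12$ is excluded because $\psi_{[n]} = 1$ and the next eigenvalues are positive and strictly smaller, but this must be checked), and one must handle the case where several $\lambda$ share the maximal relevant magnitude by checking their character-difference contributions have consistent sign. Concretely I would: (1) show that among all $\lambda$ with $\chi_\lambda(\alpha) \neq \chi_\lambda(\beta)$ there is a unique dominance-minimal one, or that all dominance-minimal ones contribute the same sign; (2) invoke Lemmas \ref{deci} and \ref{twopartbiggest}, Theorem \ref{n2bound}, and Proposition \ref{eigmaj} to conclude this $\lambda$ also has strictly largest $|\psi_\lambda|$ among the contributing partitions, with $\psi_\lambda > 0$ so no sign oscillation in $t$; (3) conclude that for $t$ large enough (depending on $n$) the sign of $K^{*t}(\alpha) - K^{*t}(\beta)$ equals the sign of $d_\lambda(\chi_\lambda(\alpha) - \chi_\lambda(\beta))$, which is positive exactly when $\alpha >_{CL} \beta$. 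The transitivity and totality of $>_{CL}$ then upgrade these pairwise comparisons to a genuine limiting likelihood order, completing the proof.
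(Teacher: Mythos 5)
Your proposal follows essentially the same route as the paper: discrete Fourier inversion, the $i$-cycle-detector facts from \cite{Mythesis} (non-detecting partitions contribute zero when $\alpha,\beta$ first differ at an $i$-cycle, and $\chi_{[n-i,i]}(\alpha)-\chi_{[n-i,i]}(\beta)=a_i-b_i$), combined with Lemma \ref{deci}, Lemma \ref{twopartbiggest} and Theorem \ref{n2bound} to conclude that $[n-i,i]$ carries the largest, positive, relevant eigenvalue, so the sign of the dominant term gives the cycle lexicographic order. The only slip is the phrase identifying the winning partition as the dominance-minimal contributing one (it is the detector with the largest eigenvalue, namely the two-row partition $[n-i,i]$), but your step (2) targets exactly that property, so the substance matches the paper's proof.
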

\begin{proof}
The difference in likelihood of two permutations $\alpha$ and $\beta$ can be studied through the discrete Fourier transform. For the involution walk at two permutations $\alpha$ and $\beta$,
\[ P^{*t}(\alpha) - P^{*t}(\beta) = \frac{1}{n!}\sum_{\lambda} \left( \chi_{\lambda}(\alpha) - \chi_{\lambda}(\beta) \right) d_{\lambda} (\psi_\lambda)^t \]

The trivial representation has eigenvalue and coefficient one in the discrete Fourier decomposition for both $\alpha$ and $\beta$ and so vanishes. Other partitions for which $\chi_{\lambda}(\alpha) = \chi_{\lambda}(\beta)$ will also not contribute to this quantity. After sufficient time, the terms for the partitions with largest eigenvalue in magnitude with $\chi_{\lambda}(\alpha) \neq \chi_{\lambda}(\beta)$ will be exponentially larger than any other terms and hense will will determine the sign of $P^{*t}(\alpha) - P^{*t}(\beta)$. In lazy walks the largest eigenvalue in magnitude almost always occurs for a single partition.

From \cite{Mythesis}, a partition is called an $i$-cycle detector if $\lambda_2 + \lambda_1' -2 \geq i$ and $\lambda_1 + \lambda_2' -2 \geq i$. If $\lambda$ is not an $i$-cycle detector and the smallest cycle differing in the cycle decomposition of $\alpha$ and $\beta$ is an $i$-cycle, then $\chi_{\lambda}(\alpha) - \chi_{\lambda}(\beta) =0$ \cite{Mythesis}. Therefore, one must only examine the $i$-cycle detecting partitions for each value of $i$ from $1$ to $n/2$ in order to find the eventual likelihood order. By Lemma \ref{deci}, Lemma \ref{twopartbiggest}, and Theorem \ref{n2bound}, the partition $[n-i,i]$ has the largest magnitude of eigenvalue of all $i$-cycle detecting partitions. More over, when $\alpha$ and $\beta$ first differ at an $i$-cycle,  $\chi_{[n-i,i]}(\alpha) - \chi_{[n-i,i]}(\beta) =a_i -b_i\neq 0$ \cite{Mythesis}. In this case, the term for $[n-i,i]$ in the discrete Fourier transform, $(\chi_{[n-i,i]}(\alpha) - \chi_{[n-i,i]}(\beta))d_\lambda \psi_{[n-i,i]}^t$, determines the sign of $P^{*t}(\alpha) - P^{*t}(\beta)$ for sufficiently large $t$. Since $\chi_{[n-i,i]}(\alpha) - \chi_{[n-i,i]}(\beta) =a_i -b_i$ in this case and $\psi_{[n-i,i]}>0$, the permutation with more $i$-cycles is more likely after sufficient time. This is the cycle lexicographic order from Definition \ref{CLorder}.  

\end{proof}

\section{Upper Bound on Mixing}\label{biinv}

This section will be working towards bounds on $\psi_\lambda$ to use in the upper bound formula,

\[ \left|\left|K^{*t}(\sigma) - \frac{1}{n!}\right|\right|_{TV}^2 \leq \frac{1}{4} \sum_{\lambda \neq 1} d_{\lambda}^2 \psi_{\lambda}^{2t} \]

Recall that,

\[ \psi_\lambda = \sum_{s=0}^{n/2} p^{n/2-s}(1-p)^s {{n/2 \choose s}} \frac{ \chi_{\lambda}(1^{n-2s},2^{s})}{d_{\lambda}} \]

Instead of bounding $\chi_{\lambda}(1^{n-2s},2^s)$ for each $s$ individually, the character polynomial will give an expression for the character as a polynomial in $n-2s$ and $s$. The character polynomial, $q_{\rho}(x_1,...,x_k)$ for the partition $\rho$ of $k$ is a polynomial in variables $x_1,...,x_k$ so that 
\[\chi_{[n-k, \rho_1,...,\rho_r]}(1^{x_1},...,k^{x_k},...,n^{x_n}) = q_{\rho}(x_1,...,x_k) \]
 for any conjugacy class $(1^{x_1},...,n^{x_n})$ of $S_n$. Garsia and Goupil ~\cite{CP} give a formula for the character polynomial akin to the Murnaghan-Nakayama rule run backwards from its traditional order, peeling off border strips of the largest cycles first.

\[ q_{\rho}(x_1,...,x_i,0,...,0) = \sum_{j} {{x_i \choose j}} \sum_{P = (\rho^0,...,\rho^{j})} (-1)^{ht(P)}q_{\rho^j}(x_1,...,x_{i-1},0,...,0) \]

Where $P$ ranges over all possible ways of removing border strips of size $i$ from $\rho$ so that a Young diagram remains at each step, as in Murnaghan-Nakayama. The formula says, choose $j$ $i$-cycles of the $x_i$ $i$-cycles and attempt to peel them off from below the first row of $\lambda$, and take the remaining $x_i-j$ $i$-cycles from the first row of $\lambda$. Recurse on the remaining shape with the next largest cycle size. In Murnaghan-Nakayama, the first row does not receive this special treatment. Letting $i=2$ gives the character polynomial for an involution as:

\[q_{\rho}(n-2s,s,0,...0) = \sum_{j} {{s \choose j}}  \sum_{P=(\rho^0,...,\rho^{j})} (-1)^{ht(P)}q_{\rho^j}(n-2s,0,...,0) \]\label{chp}

Where the last term can be expanded as \[q_{\rho^j}(n-2s) = d_{n-2s-|\rho^j|,\rho^j} = {{n-2s \choose |\rho^j|}}d_{\rho^j} \prod_{k=1}^{\rho^j_1}\frac{n-2s-|\rho^j|-k+1}{n-2s-|\rho^j|-k+(\rho^j)_k'+1}\]

Then an upper bound on $q_{\rho^j}$ that is more computationally tractable comes from ignoring the sign associated with the insertions, rounding $\frac{n-2s - |\rho^j|-k-i+1}{n-2s - |\rho^j|-k-i+(\rho^j)_i'+1}$ to $1$, and upper bounding the ways of inserting one and two cycles by the dimension of $\rho$ giving:

\[q_{\rho}(n-2s,s) \leq \sum_{j} {{s \choose j}}\sum_{\rho^0,...,\rho^{j}}{{n-2s \choose |\rho| -2j}} d_{\rho^j} \leq \sum_j {{s \choose j}}{{n-2s \choose |\rho|-2j}} d_{\rho} \]

Then using this in $\psi_{\lambda}$ and splitting $s$ into $j_1$ and $j_2$ gives:

\begin{align} \psi_\lambda \leq \sum_{s=0}^{n/2} p^{n/2-s}(1-p)^s {{n/2 \choose s}}\sum_{j}{{s \choose j}}{{n-2s \choose n- \lambda_1 - 2j}} \frac{ d_{\lambda / \lambda_1}}{d_{\lambda}} \\ =\frac{d_{\lambda / \lambda_1}}{d_{\lambda} }p^{n/2} \sum_{j_1, j_2} \left(\frac{1-p}{p}\right)^{j_1 + j_2} {{n/2 \choose j_1,j_2}}{{n-2j_1-2j_2 \choose n- \lambda_1 -2j_2}} \end{align}\label{js}

This says to approximate $\psi$ take the expectation over the binomial distribution over all ways to choose $j_1$ and $ j_2$ of the $n/2$ $2$-cycles to insert into the first row and the remaining partition and to split the remaining unused numbers into either the first row or the remaining partition. The $d_{\lambda / \lambda_1}$ factor takes into account that there may be may ways to arrange things in the lower part of the partition. When $\lambda_1 \geq n/2$, the maximum value of $\frac{ d_{\lambda / \lambda_1}}{d_{\lambda}}$ occurs at the partition $[n-i,i]$ where this a very good approximation. 

\begin{proposition}
For $1\leq i \leq \frac{n}{2}$, \[\max_{\lambda: \lambda_1=n-i}  \frac{ d_{\lambda / \lambda_1}}{d_{\lambda}} = \frac{d_{[i]}}{d_{[n-i,i]}} = {n \choose i}^{-1}\frac{n-i+1}{n-2i+1} \]
\end{proposition}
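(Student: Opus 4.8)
The plan is to parametrise the competing partitions, reduce $d_{\lambda/\lambda_1}/d_\lambda$ to a product over the first-row hook lengths of $\lambda$, and then solve a small optimisation showing this product is maximised at $\lambda=[n-i,i]$. Write $\lambda=[\,n-i\,]\cup\mu$ with $\mu:=\lambda/\lambda_1\vdash i$; since $i\le n/2$ we have $\mu_1\le i\le n-i=\lambda_1$, so every partition $\mu$ of $i$ arises. The key point is that the hook lengths of the cells of $\mu$, viewed inside $\lambda$, coincide with their intrinsic hook lengths in $\mu$: prepending the longer first row $n-i$ changes neither the arm nor the leg of any cell in rows $\ge 2$. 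Hence, writing $h_\lambda(1,k)$ for the hook length of the $k$-th cell of the first row, the hook length formula gives
\[ \frac{d_\mu}{d_\lambda}=\frac{i!}{n!}\prod_{k=1}^{n-i}h_\lambda(1,k),\qquad h_\lambda(1,k)=(n-i-k)+\mu'_k+1 . \]
Equivalently this is the $s=0$ specialisation of the expansion of $q_{\rho^j}(n-2s)$ recorded above, with $n-2s$ replaced by $n$; either way, $\tfrac{d_\mu}{d_\lambda}=\binom ni^{-1}\prod_{k=1}^{\mu_1}\bigl(1+\mu'_k/(n-i-k+1)\bigr)$ after pulling out the constant $\prod_{k=1}^{n-i}(n-i+1-k)=(n-i)!$.

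It therefore remains to maximise $\prod_{k\ge1}\bigl(1+\mu'_k/(n-i-k+1)\bigr)$, equivalently $\prod_{k=1}^{n-i}\bigl(n-i+1-k+\mu'_k\bigr)$, over partitions $\mu\vdash i$. I would handle this by an exchange argument. If $\lambda\ne[n-i,i]$ then $\mu$ has at least two rows, so $\mu'$ has a part $\ge 2$; let $k$ be the largest index with $\mu'_k\ge 2$ and move one box from row $k$ of $\mu'$ to a new bottom row $l=\ell(\mu')+1$. The choice of $k$ guarantees the result is again a partition, and since the base $c_k:=n-i+1-k$ is strictly decreasing in $k$, the product strictly increases: with $a=c_k+\mu'_k$ and $b=c_l$ the ratio of the new product to the old is $\tfrac{(a-1)(b+1)}{ab}=1+\tfrac{a-b-1}{ab}$, and $a-b-1=(l-k)+\mu'_k-1\ge 1+2-1>0$. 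Hence the maximum is attained only at $\mu'=[1^i]$, i.e.\ $\mu=[i]$, i.e.\ $\lambda=[n-i,i]$; all factors involved are positive because $i\le n/2$, so no sign issues arise.

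Finally, evaluate at $\mu=[i]$, where $\mu'_k=1$ for $1\le k\le i$: then $\prod_{k=1}^i\bigl(1+\tfrac1{n-i-k+1}\bigr)=\prod_{k=1}^i\tfrac{n-i-k+2}{n-i-k+1}$ telescopes to $\tfrac{n-i+1}{n-2i+1}$, and combining with the $\binom ni^{-1}$ yields $\tfrac{d_{[i]}}{d_{[n-i,i]}}=\binom ni^{-1}\tfrac{n-i+1}{n-2i+1}$, as claimed. I expect the exchange step to be the only real obstacle: one must exhibit a single box-move that is simultaneously a legal partition move and strictly product-increasing, and verify it applies to every $\lambda$ other than $[n-i,i]$. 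The inequality is immediate once the move is fixed; finding a case-free move (the reason for choosing the deepest row with $\ge 2$ boxes and a fresh bottom row) is what takes a moment's care.
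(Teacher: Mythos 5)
Your proof is correct and follows essentially the same route as the paper: both reduce $d_{\lambda/\lambda_1}/d_{\lambda}$ via the hook length formula to a product over the first-row hook lengths and then maximize that product over all partitions with first row $n-i$, identifying $[n-i,i]$ as the extremum and evaluating it by telescoping. Your explicit box-exchange argument merely makes rigorous the optimization step that the paper asserts briefly (that the product is maximized when the column lengths below the first row are as spread out as possible).
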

\begin{proof}
Using the hook length formula \cite{Stanley2},
\[\frac{d_{\lambda/\lambda_1}}{d_{\lambda}} = \frac{\prod_{k=1}^{\lambda_1} \lambda_1 - k + \lambda_k'}{(n)_{\lambda_1}} = {{n \choose \lambda_1}} \prod_{k=1}^{\lambda_1}\frac{\lambda_1 - k + \lambda_k'}{\lambda_1 -k +1} \]
The $\lambda_k'$ are decreasing, and the product is maximized if these are taken to be as even as possible. So for $1 \leq k \leq i$, $\lambda_k' =1$, for $k>i$, $\lambda_k'=0$. This is the partition $[n-i,i]$. 
\end{proof}

The bound used above on the character polynomial, principly that $\sum_{P = \{\rho_0,...,\rho_j\}} (-1)^{P} \leq d_{\rho}$, was sufficiently strong for the partitions with first row at least $n/2$, but not for those with smaller first row. However, by Proposition \ref{n2bound}, the eigenvalues for $\lambda$ with $\lambda_1 < n/2$ are bounded by the eigenvalue for $[n/2,n/2]$.  

The next step is to handle the sum (\ref{js}). Instead of counting how the the two cycles $(1,2),...,(n-1,n)$ and unchosen cycles used as fixed points are arranged separately, an easier approach exists. Consider instead, splitting the numbers $1,2,...,n$ into two parts. When $2i-1$ and $2i$ are in the same part, this could have happened using them as a single two cycle, or separately as fixed points, for a total weight under the binomial distribution of $\frac{1-p}{p} +1 = \frac{1}{p}$. And when $2i-1$ and $2i$ are not in the same part, this could only have happened from $1$-cycle insertion but two different ways, for a weight of $2$. 

\begin{proposition}\label{SEAWORLD}
\[ \sum_{j_1,j_2} \left(\frac{1-p}{p}\right)^{j_1 + j_2} {{n/2 \choose j_1,j_2}}{{n-2j_1-2j_2 \choose n- \lambda_1 -2j_1 -2j_2}} = \sum_{j} 2^j \frac{1}{p^{n/2-j}}{{n/2 \choose j, \frac{n-i-j}{2},\frac{i-j}{2}}} \]
\end{proposition}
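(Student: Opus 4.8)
The plan is to recognize both sides as two ways of expanding a single generating function, handled one distinguished pair $\{2k-1,2k\}$ at a time. Reading a term of the left-hand side as in the paragraph preceding the proposition: each of the $n/2$ pairs is declared to be either a genuine $2$-cycle placed in the first row (weight $\tfrac{1-p}{p}$, putting none of its two numbers below the first row), or a genuine $2$-cycle placed below the first row (weight $\tfrac{1-p}{p}$, putting both of its numbers below), or a pair of fixed points (weight $1$, its two numbers distributed freely, with $0$, $1$ or $2$ of them below and respective multiplicities $1,2,1$). The binomial coefficient counting how the leftover numbers are placed relative to the first row factors as a product of one $(1+v)^2$ per fixed-point pair, so the whole left-hand side equals
\[ \Big[\,v^{\,i}\,\Big]\ \Big(\tfrac{1-p}{p}(1+v^2) + (1+v)^2\Big)^{n/2}, \]
where $v$ marks ``below the first row'' and $\lambda_1 = n-i$. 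I would first establish this reduction by expanding the $n/2$-th power with the multinomial theorem and matching the exponents of $\tfrac{1-p}{p}$ and of $v$ against the two insertion counts $j_1$ (first-row $2$-cycles) and $j_2$ (below-first-row $2$-cycles).

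Next comes the one-line simplification
\[ \tfrac{1-p}{p}(1+v^2) + (1+v)^2 = \Big(\tfrac{1-p}{p}+1\Big)(1+v^2) + 2v = \tfrac1p + 2v + \tfrac1p v^2 , \]
collapsing the base to a trinomial. Expanding $\big(\tfrac1p + 2v + \tfrac1p v^2\big)^{n/2}$ by the multinomial theorem, the coefficient of $v^{i}$ comes from choosing $j$ of the $n/2$ factors to contribute $2v$, exactly $\tfrac{i-j}{2}$ of them to contribute $\tfrac1p v^2$, and the remaining $\tfrac{n-i-j}{2}$ to contribute $\tfrac1p$; this gives $2^j p^{-(n/2-j)}\binom{n/2}{\,j,\,(n-i-j)/2,\,(i-j)/2\,}$, and summing over $j$ is exactly the right-hand side. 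The parity constraint $i\equiv j \pmod{2}$ (using that $n$ is even) is automatic: for other $j$ the exponents $\tfrac{i-j}{2}$, $\tfrac{n-i-j}{2}$ are non-integral and the multinomial coefficient is $0$ on both sides by the usual convention.

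The same computation admits a purely bijective phrasing that follows the prose of the paper, and I would include it as the conceptual version: re-sum the left-hand side over the eventual splitting of $\{1,\dots,n\}$ into an ``above'' block of size $n-i$ and a ``below'' block of size $i$; for a fixed splitting, a contributing term amounts to a choice of which of the monochromatic pairs (both elements in one block) are active $2$-cycles, and the geometric sum over those choices contributes a factor $1+\tfrac{1-p}{p} = \tfrac1p$ for each of the $n/2-j$ monochromatic pairs, where $j$ is the number of dichromatic pairs. Since there are $2^j\binom{n/2}{\,j,\,(n-i-j)/2,\,(i-j)/2\,}$ such splittings with a given $j$ (choose the dichromatic pairs, choose which of the remaining pairs lie above, and orient each dichromatic pair), summing over $j$ returns the right-hand side.

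I do not expect a real obstacle. The single step that needs care is the ``separability'' used in the first reduction — that the lone binomial coefficient counting the placement of the $n-2j_1-2j_2$ leftover numbers is the product of per-pair factors $(1+v)^2$ — together with the integrality/parity bookkeeping, which is dispatched by the convention noted above. Everything past that point is the multinomial theorem applied twice and the algebraic identity $\tfrac{1-p}{p}(1+v^2)+(1+v)^2=\tfrac1p+2v+\tfrac1p v^2$.
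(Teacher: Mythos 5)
Your proof is correct and takes essentially the same route as the paper, whose entire argument is the prose re-summation over splittings of $\{1,\dots,n\}$ into the part above and below the first row, with each monochromatic pair contributing $\tfrac{1-p}{p}+1=\tfrac1p$ and each dichromatic pair a factor $2$; your generating-function computation via $\tfrac{1-p}{p}(1+v^2)+(1+v)^2=\tfrac1p+2v+\tfrac1p v^2$ is just a clean formalization of that same per-pair bookkeeping. One remark: like the paper's own derivation in the display preceding the proposition, you effectively read the left-hand binomial as $\binom{n-2j_1-2j_2}{\,n-\lambda_1-2j_2\,}$, which is the version for which the identity holds (e.g.\ check $n=2$, $\lambda=[2]$); the lower index $n-\lambda_1-2j_1-2j_2$ printed in the proposition's statement appears to be a typo.
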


Note that $j$ must be such that $i-j$,$n-i-j$ are both even. So, 

\begin{proposition}
For $\lambda_1 \geq \frac{n}{2}$ let $i = n - \lambda_1$, for $\lambda_1 \leq \frac{n}{2}$ let $i = \frac{n}{2}$ 
\[ \psi_{\lambda} \leq {{n \choose i}}^{-1} \frac{n-i+1}{n-2i+1}  \sum_{j \leq i, i-j \text{even}} (2p)^j {{n/2 \choose j, \frac{n-i-j}{2},\frac{i-j}{2}}} \]
\end{proposition}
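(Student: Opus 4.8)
The plan is to assemble ingredients that are already in place: the character-polynomial upper bound \eqref{js} on $\psi_\lambda$, the explicit value of $\max_{\lambda:\lambda_1=n-i} d_{\lambda/\lambda_1}/d_\lambda$, the rewriting of the double sum furnished by Proposition \ref{SEAWORLD}, and --- for partitions with short first row --- Theorem \ref{n2bound}. No new estimate is needed; the work is entirely bookkeeping, chiefly tracking the parity constraint on the summation index.

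First I would treat the case $\lambda_1 \geq n/2$, so that $i := n-\lambda_1$ satisfies $1 \le i \le n/2$ (the case $i=0$, $\lambda=[n]$, being trivial). Starting from \eqref{js}, namely $\psi_\lambda \le \frac{d_{\lambda/\lambda_1}}{d_\lambda}\,p^{n/2}\sum_{j_1,j_2}\left(\frac{1-p}{p}\right)^{j_1+j_2}\binom{n/2}{j_1,j_2}\binom{n-2j_1-2j_2}{\,n-\lambda_1-2j_2\,}$, I would apply Proposition \ref{SEAWORLD} to replace the double sum by $\sum_{j} 2^{j}\,p^{\,j-n/2}\binom{n/2}{\,j,\frac{n-i-j}{2},\frac{i-j}{2}\,}$, where $j$ ranges over the nonnegative integers with $i-j$ even (equivalently $n-i-j$ even, since $n$ is even) and $j\le i$; the remaining admissibility requirement $j\le n-i$ is automatic because $i\le n/2$. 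Multiplying by the prefactor $p^{n/2}$ collapses the powers of $p$ to $(2p)^j$, so $\psi_\lambda \le \frac{d_{\lambda/\lambda_1}}{d_\lambda}\sum_{j\le i,\ i-j\ \mathrm{even}}(2p)^j\binom{n/2}{j,\frac{n-i-j}{2},\frac{i-j}{2}}$. Since every term of this sum is nonnegative, I can bound $d_{\lambda/\lambda_1}/d_\lambda$ from above by its maximum over all partitions with first row $n-i$, which the earlier evaluation of $\max_{\lambda:\lambda_1=n-i}\frac{d_{\lambda/\lambda_1}}{d_\lambda}$ identifies as $\binom{n}{i}^{-1}\frac{n-i+1}{n-2i+1}$ (well-defined, as $n-2i+1 = 2\lambda_1-n+1\ge 1$). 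That is exactly the asserted inequality with $i=n-\lambda_1$.

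For the remaining case $\lambda_1 \le n/2$ --- under the standing hypotheses $\lambda_1'\le\lambda_1$ and $p\ge\frac12$ that Theorem \ref{n2bound} requires --- I would invoke Theorem \ref{n2bound} to get $\psi_\lambda\le\psi_{[n/2,n/2]}$ and then apply the first case to the partition $[n/2,n/2]$, which has $\lambda_1=n/2$ and hence $i=n-n/2=n/2$. Here $\frac{n-i+1}{n-2i+1}=\frac{n}{2}+1$, the two lower entries of the multinomial coincide, and the range becomes $j\le n/2$ with $n/2-j$ even, matching the statement with $i=n/2$.

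The main obstacle --- really the only delicate point --- is feeding Proposition \ref{SEAWORLD} the precise double sum occurring in \eqref{js} and carrying through its implicit parity and range restrictions, so that the final index set ``$j\le i$ with $i-j$ even'' emerges correctly; everything else is cancellation of powers of $p$ and one monotone substitution justified by nonnegativity of the summand. A minor secondary check is that, under those restrictions, $\frac{n-i-j}{2}$ and $\frac{i-j}{2}$ are genuine nonnegative integers which together with $j$ sum to $n/2$, so the multinomial coefficient is legitimate.
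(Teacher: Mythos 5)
Your proposal is correct and follows essentially the same route as the paper, which treats this proposition as immediate bookkeeping: chain the character-polynomial bound (\ref{js}) with Proposition \ref{SEAWORLD} and the evaluation of $\max_{\lambda:\lambda_1=n-i} d_{\lambda/\lambda_1}/d_\lambda$, and for $\lambda_1\leq n/2$ reduce to $[n/2,n/2]$ via Theorem \ref{n2bound}. Your explicit flagging of the parity/range constraints on $j$ and of the standing hypotheses $p\geq\tfrac12$, $\lambda_1'\leq\lambda_1$ needed for the short-first-row case matches (and slightly tidies) the paper's implicit argument.
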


Now to approximate the sum, one can use that it is less than $i/2$ times its largest term, except for $i$ small where the largest term is the last and the other terms will be exponentially smaller. Note that nothing is assumed about $p$ in this bound.

\begin{proposition}
When \[ \alpha= \sqrt{1 + \frac{1-p^2}{p^2}\frac{4(n-i)(i)}{n^2}}\]
\begin{align*} \sum_{j} (2p)^j {{n/2 \choose j, \frac{n-i-j}{2},\frac{i-j}{2}}} \leq& {n/2 \choose i/2}\left(1 - \frac{n-i}{n}\frac{2}{1+ \alpha}\right)^{-\frac{i}{2}}\left(1 - \frac{i}{n}\frac{2}{1 + \alpha}\right)^{-\frac{n-i}{2}}\\
&\frac{i}{2}\left(1_{i = n/2}\frac{1}{\sqrt{2\pi i}} + 1_{i\neq n/2}\sqrt{\frac{n-2i+1}{n-i+1}}  \right)\end{align*}
\end{proposition}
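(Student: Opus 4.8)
The plan is to treat the sum $\sum_j (2p)^j \binom{n/2}{j,\frac{n-i-j}{2},\frac{i-j}{2}}$ as a slice of the trinomial expansion of $(p\cdot 1 + p\cdot 1 + 2p\cdot x)^{n/2}$-type generating function — more precisely, to recognize that $\sum_j (2p)^j \binom{n/2}{j, \frac{n-i-j}{2}, \frac{i-j}{2}}$ is, up to a normalization, the coefficient extraction from $(x^2 + 2px\, y + y^2)^{n/2}$ or equivalently a shifted/weighted version of $(1 + 2pt + t^2)^{n/2}$. First I would write $\binom{n/2}{j,\frac{n-i-j}{2},\frac{i-j}{2}}$ explicitly and identify the largest term: set the ratio of consecutive terms (in $j$, stepping by $2$ since $i-j$ must stay even, or more naturally reparametrizing) equal to $1$ to locate the mode $j^*$. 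The quantity $\alpha = \sqrt{1 + \frac{1-p^2}{p^2}\frac{4(n-i)i}{n^2}}$ is exactly what shows up when one solves the resulting quadratic for the mode: the critical point of $(2p)^j \binom{n/2}{j,\frac{n-i-j}{2},\frac{i-j}{2}}$ sits where $\frac{(n-i-j)(i-j)}{4} = \frac{(j+2)(j+1)}{(2p)^2}$-ish, and completing the square in $j$ produces the $\frac{1}{1+\alpha}$ factors. So $j^* \approx \frac{n}{2}\left(1 - \frac{2}{1+\alpha}\cdot\frac{\text{something}}{n}\right)$ and substituting back gives the claimed closed form for the max term.

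Concretely, the strategy is: (1) bound the sum by $(\text{number of nonzero terms}) \times (\text{maximal term})$, where the number of nonzero terms is at most $i/2 + 1 \le i/2 \cdot(1+o(1))$ — but actually I would be more careful and use $i/2$ directly with the understanding that for $i$ small the geometric decay makes even this crude count harmless, which is why the proposition remarks the last term dominates for small $i$; (2) compute the maximal term by evaluating $(2p)^{j^*}\binom{n/2}{j^*, \frac{n-i-j^*}{2}, \frac{i-j^*}{2}}$ using Stirling's approximation for all three multinomial factorials; (3) simplify the exponential part of the Stirling estimate — the entropy-type expression $\exp\!\big(\tfrac n2 H(\ldots)\big)$ collapses to the product $\big(1 - \frac{n-i}{n}\frac{2}{1+\alpha}\big)^{-i/2}\big(1 - \frac{i}{n}\frac{2}{1+\alpha}\big)^{-(n-i)/2}$ times $\binom{n/2}{i/2}$ once $\alpha$ is plugged in; (4) collect the polynomial (square-root) prefactors from Stirling, which produce the $\frac{1}{\sqrt{2\pi i}}$ term when $i = n/2$ (the balanced case, where two of the three multinomial parts vanish and the third is $i/2$, so only one $\sqrt{2\pi\cdot}$ survives) and the $\sqrt{\frac{n-2i+1}{n-i+1}}$ term otherwise (the ratio of the relevant Stirling square roots, matching the shape of the $\binom{n}{i}^{-1}\frac{n-i+1}{n-2i+1}$ factor from the previous proposition).

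The main obstacle I expect is step (3)–(4): carefully matching the Stirling-derived expression to the exact algebraic form stated, since $\alpha$ is defined so that the messy logarithms telescope, and verifying this requires patiently substituting $\frac{2}{1+\alpha}$ and checking that $\big(1-\frac{n-i}{n}\frac{2}{1+\alpha}\big)\big(1-\frac{i}{n}\frac{2}{1+\alpha}\big)$ and related products reduce correctly using the identity $\alpha^2 - 1 = \frac{1-p^2}{p^2}\frac{4(n-i)i}{n^2}$, i.e. $(\alpha-1)(\alpha+1) = \frac{1-p^2}{p^2}\frac{4(n-i)i}{n^2}$. A secondary subtlety is justifying the direction of the inequality: Stirling's formula has two-sided bounds, and one must make sure the error terms go the right way (using the standard $n! \le e\sqrt{n}(n/e)^n$ type upper bounds for the numerator factorials and lower bounds for the denominator factorials) so that the stated expression is genuinely an upper bound and not merely an asymptotic equality. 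I would also need to handle the parity constraint ($j$ ranges over values with $i-j$ even) cleanly — either by reindexing $j = i - 2\ell$ so the sum is over all integers $\ell$ in a range, or by noting that restricting to one parity class only shrinks the sum and the mode computation is unaffected to leading order.
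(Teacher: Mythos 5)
Your plan follows the paper's own argument essentially step for step: the paper likewise bounds the sum by (number of terms) times the maximal term, locates the optimizing $j' = \frac{i(n-i)}{n}\frac{2}{1+\alpha}$ by differentiating the Stirling-approximated term (your ratio-of-consecutive-terms criterion is just the discrete version of that computation), reassembles the exponential part into $\binom{n/2}{i/2}$ times the two $\left(1 - \frac{n-i}{n}\frac{2}{1+\alpha}\right)^{-i/2}\left(1 - \frac{i}{n}\frac{2}{1+\alpha}\right)^{-(n-i)/2}$ factors via a second application of Stirling, and handles the square-root prefactors by the same case analysis ($j=0$, $j=i=n/2$, $j=i\neq n/2$) while minding the direction of the Stirling bounds and the parity of $i-j$. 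The only caveat is the prefactor bookkeeping you flagged as the main obstacle: even the paper's own derivation ends with constants such as $\frac{i+1}{2}\frac{e^{2}}{2^{3/2}}\sqrt{\frac{n-i}{n-2i}}$ that do not literally match the displayed statement, so expect the same fiddliness there rather than an exact telescoping to the stated form.
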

\begin{proof}
One version of Stirling's formula is that \begin{align} \label{Stirling} \sqrt{2n}\left(\frac{n}{e}\right)^{n/e} \leq n! \leq \sqrt{n}e\left(\frac{n}{e}\right)^{n/e} \end{align} Applying the lower bound to $j! \frac{n-i-j}{2}!\frac{i-j}{2}!$ gives: 
\[j! \frac{n-i-j}{2}!\frac{i-j}{2}! \geq \left(\frac{j}{e}\right)^j\left(\frac{n-i-j}{2e}\right)^{\frac{n-i-j}{2e}} \left(\frac{i-j}{2e}\right)^{\frac{i-j}{2}} \sqrt{(2)^3j\frac{n-i-j}{2}\frac{i-j}{2}}\]
Separate this into two pieces \[j^j\left(\frac{n-i-j}{2}\right)^{\frac{n-i-j}{2}} \left(\frac{i-j}{2}\right)^{\frac{i-j}{2}}\] and \begin{align}\label{forgotten} e^{-n/2}\sqrt{(2)^3j\frac{n-i-j}{2}\frac{i-j}{2}}\end{align} Consider the maximal $j$ for the first piece with the $(2p)^j$ added.

\begin{align*} 
&\frac{d}{dj}\left((2p)^{-j}j^j\left(\frac{n-i-j}{2}\right)^{\frac{n-i-j}{2}}\left(\frac{i-j}{2}\right)^{\frac{i-j}{2}} \right)\\
&= \frac{d}{dj}e^{-j\log(2p) +j\log(j) + \frac{n-i-j}{2}\log\left(\frac{n-i-j}{2}\right) + \frac{i-j}{2}\log{\left(\frac{i-j}{2}\right)}} \\
&= \left(-\log(2p) + \log(j) +1 - \frac{1}{2}\log\left(\frac{n-i-j}{2}\right) -\frac{1}{2}- \frac{1}{2}\log\left(\frac{i-j}{2} \right) -\frac{1}{2} \right)\\
&e^{-j\log(2p) +j\log(j) + \frac{n-i-j}{2}\log\left(\frac{n-i-j}{2}\right) + \frac{i-j}{2}\log{\left(\frac{i-j}{2}\right)}} \\
&= \frac{1}{2}\log{\left(\frac{4j^2}{4p^2(n-i-j)(i-j)}\right)} e^{-j\log(2p) +j\log(j) + \frac{n-i-j}{2}\log\left(\frac{n-i-j}{2}\right) + \frac{i-j}{2}\log{\left(\frac{i-j}{2}\right)}}\\
\end{align*}

Solving for the value of $j$, $j'$, that gives $\frac{4j^2}{4p^2(n-i-j)(i-j)}=1$ gives:
\[ j' = \frac{\sqrt{1 + \frac{1-p^2}{p^2} \frac{4(n-i)(i)}{n^2}} -1}{2} \frac{p^2}{1-p^2}n = \frac{i(n-i)}{n}\frac{2}{1+\alpha} \]
Plugging this back into the expression $(2p)^{-j}j^j\left(\frac{n-i-j}{2}\right)^{\frac{n-i-j}{2}} \left(\frac{i-j}{2}\right)^{\frac{i-j}{2}}$ gives: 

\begin{align*} &e^{-j'\log(2p) +j'\log(j') + \frac{i-j'}{2}\log\left(\frac{j'-i}{2}\right) + \frac{n-i-j'}{2}\log\left(\frac{n-i-j'}{2}\right)} \\
=& e^{\frac{j'}{2}\log\left(\frac{4j'^2}{(2p)^2(j'-i)(n-i-j')}\right) + \frac{i}{2}\log\left(\frac{i-j'}{2}\right) + \frac{n-i}{2}\log\left(\frac{n-i-j'}{2}\right)} \end{align*}

Where $j'$ was chosen exactly to make the expression $\frac{4j^2}{(2p)^2(j-i)(n-i-j)}=1$. Further $i-j' = i\left(1 - \frac{n-i}{n}\frac{2}{1+ \alpha}\right)$ and $n-i-j' = (n-i)\left(1 - \frac{i}{n}\frac{2}{1 + \alpha}\right)$, so 

\[\frac{i}{2}\log\left(\frac{i-j'}{2} \right)=\frac{i}{2}\log\left(\frac{i}{2}\right) - \frac{i}{2}\log\left(1 - \frac{n-i}{n}\frac{2}{1+ \alpha}\right) \]
\[\frac{n-i}{2}\log\left(\frac{n-i-j'}{2}\right)= \frac{n-i}{2}\log\left(\frac{n-i}{2}\right) - \frac{n-i}{2}\log\left(1 - \frac{i}{n}\frac{2}{1 + \alpha}\right) \]

So this gives after another application of Stirling's formula, this time the upper bound from (\ref{Stirling}), a $\frac{i}{2}!\frac{n-i}{2}!\frac{e^{n/2}}{e^2}\sqrt{i(n-i)}$ plus an additional $\left(1 - \frac{n-i}{n}\frac{2}{1+ \alpha}\right)^{\frac{i}{2}}\left(1 - \frac{i}{n}\frac{2}{1 + \alpha}\right)^{\frac{n-i}{2}}$.

Attaching this back to the long neglected (\ref{forgotten}) gives:
\begin{align}&j! \left(\frac{n-i-j}{2}\right)!\left(\frac{i-j}{2}\right)! \\
 \geq& \label{repbou} \left(\frac{i}{2}\right)!\left(\frac{n-i}{2}\right)!\frac{2^{3/2}}{e^2}\sqrt{\frac{(n-i-j)(i-j)j}{(n-i)i}}\left(1 - \frac{n-i}{n}\frac{2}{1+ \alpha}\right)^{\frac{i}{2}}\left(1 - \frac{i}{n}\frac{2}{1 + \alpha}\right)^{\frac{n-i}{2}}\end{align}


The expression $\sqrt{\frac{(n-i-j)(i-j)j}{(n-i)i}}$ is minimized over cases where it is non-zero when $j=i-1$ where it is still at least $\frac{n-2i+1}{n-i+1}$. The expression is $0$ when one of $j$ or $i-j$ was $0$. This problem occurs because in these cases, the use of Stirling's approximation that gave a $0$ term was not needed. The bound will be adjusted to be non-zero and hold in all cases. 

When $j=0$, leaving out the $j!$ term during the application of Stirling's formula drops a $\sqrt{2j}$ so the square root of the fraction becomes $\sqrt{\frac{(n-i-0)(i-0)}{2(n-i)i} }=\sqrt{\frac{1}{2}}$. Suppose instead $j=i=n-i=n/2$, then two applications of Stirling's should be dropped. This drops a $\sqrt{2^2}$ but gains in back in the lack of cancellation of the denominators in $\sqrt{\frac{1}{(i/2)(n-i)/2}}$.  This gives the new fraction of $\sqrt{\frac{j}{i(n-i)}} = \frac{2}{n}$. Lastly, if $j=i\neq n-i$, one less use of Stirling's means a $2$ is dropped but regained from the denominator of $\sqrt{\frac{1}{(i/2)}}$ and the fraction is at worst $\sqrt{\frac{n-2i}{(n-i)}}$. The largest of these values when $i \neq n-i$ is $\sqrt{\frac{n-2i}{(n-i)}}$. When $n-i=i$, the largest is $\sqrt{\frac{2}{n}}$. 

The expression actually of interest is $(2p)^j{n/2 \choose j,\frac{i-j}{2},\frac{n-i-j}{2}}$. This proof gave lower bounds on its reciprocal without the $(n/2)!$.  This in turn gives an upper bound on the original expression. Taking the reciprocal of (\ref{repbou}), correcting $\sqrt{\frac{(n-i-j)(i-j)j}{(n-i)i}}$ as in the above paragraph to $\sqrt{\frac{n-2i}{n-i}}$, and adding a $\frac{i+1}{2}$ the number of terms in the sum, gives for $i \neq n/2$, \[ \sum_{j} (2p)^j {{n/2 \choose j, \frac{n-i-j}{2},\frac{i-j}{2}}} \leq {n/2 \choose i/2}\frac{i+1}{2}\frac{e^2}{2^{3/2}}\sqrt{\frac{n-i}{n-2i}}\left(1 - \frac{n-i}{n}\frac{2}{1+ \alpha}\right)^{-\frac{i}{2}}\left(1 - \frac{i}{n}\frac{2}{1 + \alpha}\right)^{-\frac{n-i}{2}} \]

The other case is $i = n/2$. Taking the reciprocal of (\ref{repbou}), correcting $\sqrt{\frac{(n-i-j)(i-j)j}{(n-i)i}}$ to $\sqrt{\frac{2}{n}}$, and adding a $\frac{i+1}{2}$ the number of terms in the sum, gives for $i=n/2$:
\[ \sum_{j} (2p)^j {{n/2 \choose j, \frac{n-i-j}{2},\frac{i-j}{2}}} \leq {n/2 \choose i/2}\frac{i+1}{2}\sqrt{n/2}\frac{e^2}{2^{3/2}}\left(1 - \frac{n-i}{n}\frac{2}{1+ \alpha}\right)^{-\frac{i}{2}}\left(1 - \frac{i}{n}\frac{2}{1 + \alpha}\right)^{-\frac{n-i}{2}} \]

\end{proof}

This gives for $\lambda$ with $\lambda_1=n-i> n/2$:

\[ \psi_{\lambda} \leq \frac{{n/2 \choose i/2}}{{n \choose i}}\left(\frac{n-i+1}{n-2i+1}\sqrt{\frac{n-i}{n-2i}}\frac{i+1}{2}\frac{e^2}{2^{3/2}}\right) \left(1 - \frac{n-i}{n}\frac{2}{1+ \alpha}\right)^{-\frac{i+1}{2}}\left(1 - \frac{i}{n}\frac{2}{1 + \alpha}\right)^{-\frac{n-i}{2}} \]

And for $\lambda_1 \leq n/2$,
\[ \psi_{\lambda} \leq \frac{{n/2 \choose n/4}}{{n \choose n/2}}\left((n/2+1)^2/2\sqrt{n/2}\frac{e^2}{2^{3/2}}\right) \left(1 - \frac{1}{2}\frac{2}{1+ \alpha}\right)^{-\frac{n}{2}} \]

The next proposition brings things together into one expression:
\begin{proposition}\label{finalpsi}
 For $\lambda$ with $\lambda_1=n-i$, $i < n/2$:
\[\psi_\lambda \leq e^{-i \log(\frac{2}{1+p}) + \log\left(\frac{e^2(i+1)}{2^{5/2}}\left(\frac{n-i}{n-2i}\right)^{3/2} \right)}\]

For $\lambda_1 \frac{n}{2}$,
\[\psi_{\lambda} \leq e^{-\frac{n}{2}\log{\frac{2}{1+p}} + \log(\frac{n^{3/2}(n+2)e^2}{8})}\]
\end{proposition}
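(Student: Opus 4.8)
The plan is to start from the two explicit bounds on $\psi_\lambda$ displayed just above -- one for $\lambda_1 = n-i > n/2$, one for $\lambda_1 \le n/2$ (where one sets $i = n/2$) -- and to compress each of them into the claimed single-exponential form. In both cases the right-hand side is a product of three pieces: a binomial ratio $\binom{n/2}{i/2}/\binom{n}{i}$; a factor $\Pi(n,i)$ that is a rational-and-root expression in $n$ and $i$ (essentially $\tfrac{n-i+1}{n-2i+1}\sqrt{\tfrac{n-i}{n-2i}}\,\tfrac{i+1}{2}\,\tfrac{e^2}{2^{3/2}}$, with an analogue at $i = n/2$); and two negative powers $\bigl(1-\tfrac{n-i}{n}\tfrac{2}{1+\alpha}\bigr)^{-i/2}\bigl(1-\tfrac{i}{n}\tfrac{2}{1+\alpha}\bigr)^{-(n-i)/2}$. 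The whole content is to show that the binomial ratio times the two negative powers is at most $\bigl(\tfrac{1+p}{2}\bigr)^{i}$ up to a polynomial-in-$n$ factor; the rest is bundling the polynomial leftovers.

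First I would apply Stirling's formula (\ref{Stirling}) to the binomial ratio: expanding $\binom{n}{i}$ and $\binom{n/2}{i/2}$ and cancelling powers of $2$ gives $\binom{n/2}{i/2}/\binom{n}{i} \le C\,(i/n)^{i/2}\bigl((n-i)/n\bigr)^{(n-i)/2}$ for a bounded constant $C$ (with the small-$i$ cases handled by the exact evaluation of the multinomial sum, as in the proof of the preceding proposition). Then I would record the algebraic identity that collapses the negative powers: setting $j' := \tfrac{i(n-i)}{n}\tfrac{2}{1+\alpha}$, one has $1-\tfrac{n-i}{n}\tfrac{2}{1+\alpha} = \tfrac{i-j'}{i}$ and $1-\tfrac{i}{n}\tfrac{2}{1+\alpha} = \tfrac{n-i-j'}{n-i}$, so that with $u := i/n$, $\beta := \tfrac{2}{1+\alpha}$, $A := 1-(1-u)\beta$ and $B := 1-u\beta$ the product of the binomial ratio with the two negative powers is, up to the constant $C$, equal to $\bigl(\tfrac{u}{A}\bigr)^{i/2}\bigl(\tfrac{1-u}{B}\bigr)^{(n-i)/2}$.

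The heart of the matter, and the step I expect to be the main obstacle, is the inequality
\[ \left(\frac{u}{A}\right)^{i/2}\left(\frac{1-u}{B}\right)^{(n-i)/2} \le \left(\frac{1+p}{2}\right)^{i}, \qquad 0 < i \le \tfrac{n}{2}. \]
Using the explicit formula $\alpha = \sqrt{1 + \tfrac{1-p^2}{p^2}\,4u(1-u)}$ (so that $A$ and $B$ are explicit functions of $u$), taking logarithms and dividing by $n/2$ turns this into $\Phi(u) \le 0$ on $(0,\tfrac12]$, where $\Phi(u) = u\log\tfrac{u}{A} + (1-u)\log\tfrac{1-u}{B} - 2u\log\tfrac{1+p}{2}$. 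One checks directly that $\Phi$ vanishes at both endpoints: $\Phi(0)=0$ because $A \sim u/p^2$ and $B \to 1$ as $u \to 0^{+}$, and $\Phi(\tfrac12)=0$ because there $\alpha = 1/p$, $\beta = \tfrac{2p}{1+p}$ and $u/A = (1-u)/B = \tfrac{1+p}{2}$ exactly -- this last observation also dispatching the case $i = n/2$, where the inequality holds with equality. It then remains to rule out $\Phi$ going positive in the interior; I would do this by a direct calculus estimate, showing $\Phi$ is convex on $(0,\tfrac12)$ (so that, vanishing at both endpoints, it is $\le 0$ throughout), the nuisance being that one must differentiate through $\beta(u) = \tfrac{2}{1+\alpha(u)}$, which is best tamed by first exploiting the relation $u(1-u)\beta^{2} = p^{2}AB$ -- equivalently $j'^{2} = p^{2}(i-j')(n-i-j')$, the equation that defines $j'$.

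Finally, with the exponential factor identified, I would collect the polynomial debris -- the constant $C$, the factor $\Pi(n,i)$, and any residual $A^{-1/2}$-type term from the $\lambda_1 > n/2$ display -- and bound their product crudely by $\tfrac{e^2(i+1)}{2^{5/2}}\bigl(\tfrac{n-i}{n-2i}\bigr)^{3/2}$ for $i < n/2$ and by $\tfrac{n^{3/2}(n+2)e^2}{8}$ for $i = n/2$ (using $n-2i+1 = 1$ in that case). Taking logarithms of the two resulting product bounds yields exactly the two displayed expressions. The only genuinely non-routine ingredient is the inequality $\Phi \le 0$; everything else is substitution and coarse estimation.
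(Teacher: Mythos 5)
Your reductions match the paper's: both arguments start from the two displayed bounds, use Stirling for $\binom{n/2}{i/2}/\binom{n}{i}$, and collapse the two negative powers through the relations $1-\frac{n-i}{n}\frac{2}{1+\alpha}=\frac{i-j'}{i}$ and $1-\frac{i}{n}\frac{2}{1+\alpha}=\frac{n-i-j'}{n-i}$ (equivalently through the identity $\frac{u}{A}=\bigl(1+\frac{1-p^2}{p^2}(1-u)^2\beta^2\bigr)^{-1}$ and its mirror image, which is what the first display of the paper's proof is implicitly using), and both then bundle the rational leftovers into the stated prefactors, so that everything rests on the single inequality $\bigl(\frac{u}{A}\bigr)^{i/2}\bigl(\frac{1-u}{B}\bigr)^{(n-i)/2}\le\bigl(\frac{1+p}{2}\bigr)^{i}$. (Both you and the paper are cavalier about a bounded constant here: $\binom{n/2}{i/2}/\binom{n}{i}$ exceeds $u^{i/2}(1-u)^{(n-i)/2}$ by roughly $\sqrt{2}$, which the slack in the polynomial prefactors absorbs.)

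Where you diverge is precisely at that inequality, and that is the one step you leave unexecuted. Your endpoint checks are right ($\Phi(0^+)=0$, and at $u=\tfrac12$ one has $\alpha=1/p$ and $u/A=(1-u)/B=\tfrac{1+p}{2}$, which also yields the $\lambda_1\le n/2$ display), but the convexity of $\Phi$ on $(0,\tfrac12)$ is asserted, not proved; it looks true numerically, yet establishing it means differentiating twice through $\alpha(u)=\sqrt{1+\frac{1-p^2}{p^2}4u(1-u)}$, and the relation $u(1-u)\beta^2=p^2AB$ does not obviously tame that computation. The paper closes this step with a lighter device you could adopt: since each factor $1+\frac{1-p^2}{p^2}(\cdot)^2\frac{4}{(1+\alpha)^2}$ is decreasing in $\alpha$ and $\alpha\le 1/p$ always, replace $\frac{1}{(1+\alpha)^2}$ by $\frac{p^2}{(1+p)^2}$ first; the required exponent bound then becomes the statement that $\log\bigl(1+\frac{1-p}{1+p}\frac{4(n-i)^2}{n^2}\bigr)+\frac{n-i}{i}\log\bigl(1+\frac{1-p}{1+p}\frac{4i^2}{n^2}\bigr)$ is decreasing in $i$, hence at least its value $2\log\frac{2}{1+p}$ at $i=n/2$ --- a one-derivative monotonicity claim in a single explicit function from which $\alpha$ has disappeared. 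As written, then, your plan is structurally faithful to the paper but has a genuine hole at its only hard step: either prove the convexity of $\Phi$ (not routine) or substitute the paper's $\alpha\le 1/p$ reduction.
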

\begin{proof}

The bound on $\psi_{[n/2,n/2]}$ simplifies considerably. In particular, $\alpha = \frac{1}{p}$, giving,
\begin{align*}
\psi_{[n/2,n/2]} & \leq e^{-n/2\log\left(1 + \frac{1 - p^2}{p^2}\left(\frac{1}{1+ \alpha^2}\right)^2\right) + \log\left(\frac{n^{3/2}(n+2)e^2}{8}\right)}\\
&\leq e^{-n/2\log\left(\frac{2}{1+p}\right) + \log\left(\frac{n^{3/2}(n+2)e^2}{8}\right)}
\end{align*}

For $i < \frac{n}{2}$, $\alpha \leq \frac{1}{p}$
\begin{align*} \psi_{\lambda} \leq& e^{-\frac{i}{2}\log\left(1 + \left(\frac{1-p^2}{p^2}\right)\left(\frac{2(n-i)}{n}\right)^2\left(\frac{1}{1+ \alpha}\right)^2\right) - \frac{n-i}{2}\log\left(1 + \left(\frac{1-p^2}{p^2}\right)\left(\frac{2i}{n}\right)^2\left(\frac{1}{1+ \alpha}\right)^2\right) +\log{\left(\frac{e^2(i+1)}{2^{5/2}}\frac{n-i+1}{n-2i+1}\sqrt{\frac{n-i}{n-2i}}\right)}} \end{align*}
First, $\alpha \leq \frac{1}{p}$. Secondly, $\log\left(1 + \frac{1-p}{1+p}\frac{2(n-i)}{n}\right) + \frac{n-i}{i}\log\left(1 + \frac{1-p}{1+p}\frac{2i}{n}\right)$ can be seen to be decreasing with $i$ (by differentiation with respect to $i$). Therefore it can be bounded from below by $2\log\left(\frac{2}{1+p}\right)$. So,
\begin{align*}
\psi_{\lambda}\leq& e^{-\frac{i}{2}\left(\log\left(1 + \left(\frac{1-p}{1+p}\right)\left(\frac{2(n-i)}{n}\right)^2\right) - \frac{n-i}{i}\log\left(1 + \left(\frac{1-p}{1+p}\right)\left(\frac{2i}{n}\right)^2\right)^2\right) + \log\left(\frac{e^2(i+1)}{2^{5/2}} \left(\frac{n-i}{n-2i}\right)^{3/2}\right)}\\
\leq & e^{-i \log\left(\frac{2}{1+p}\right) + \log\left(\frac{e^2(i+1)}{2^{5/2}} \left(\frac{n-i}{n-2i}\right)^{3/2}\right)} \\
\end{align*}
\end{proof}

\begin{proposition}\label{invismall}
For $i \leq p\sqrt{n-2i+2}$, \[\sum_{j \leq i, i-j \text{even}} (2p)^j {{n/2 \choose j, \frac{n-i-j}{2},\frac{i-j}{2}}}  \leq \frac{1}{1 - \frac{i(i-1)}{2p^2(n-2i+2)}}(2p)^i {{n/2 \choose i}}\] 
\end{proposition}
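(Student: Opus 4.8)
The plan is to show that the sum on the left is controlled by its last term, which is precisely $(2p)^i{n/2 \choose i}$, together with a geometric decay of the remaining terms. First I would reindex the sum by $k = (i-j)/2$, so that $j = i-2k$ and $k$ runs over the integers $0 \le k \le \lfloor i/2 \rfloor$; write $T_k = (2p)^{i-2k}{n/2 \choose i-2k,\,\frac{n-2i+2k}{2},\,k}$ for the corresponding summand. The term $k=0$ is $T_0 = (2p)^i{n/2 \choose i,\,\frac{n-2i}{2},\,0} = (2p)^i{n/2 \choose i}$, which is exactly the quantity appearing on the right-hand side up to the factor $\bigl(1 - \frac{i(i-1)}{2p^2(n-2i+2)}\bigr)^{-1}$.

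The key computation is the ratio of consecutive terms. Expanding each multinomial coefficient as a quotient of factorials and cancelling, one finds
\[ \frac{T_k}{T_{k-1}} = \frac{(i-2k+2)(i-2k+1)}{2p^2\,k\,(n-2i+2k)}. \]
I would then bound this uniformly over $k \ge 1$: the numerator satisfies $(i-2k+2)(i-2k+1) \le i(i-1)$ since it equals $m(m+1)$ with $m = i-2k+1 \le i-1$ and $x \mapsto x(x+1)$ is increasing (and $m \ge 1$ because $j = i-2k \ge 0$ for an admissible term), while $k(n-2i+2k) \ge n-2i+2$ for $k \ge 1$. Hence $T_k/T_{k-1} \le r$, where $r := \frac{i(i-1)}{2p^2(n-2i+2)}$, and therefore $T_k \le r^k T_0$ for all $k$.

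To finish, I would invoke the hypothesis $i \le p\sqrt{n-2i+2}$, i.e. $i^2 \le p^2(n-2i+2)$, which forces $r \le \frac{i(i-1)}{2i^2} < \frac12 < 1$; thus the geometric series $\sum_{k \ge 0} r^k$ converges to $\frac{1}{1-r}$ and
\[ \sum_{j \le i,\, i-j \text{ even}} (2p)^j {n/2 \choose j,\,\frac{n-i-j}{2},\,\frac{i-j}{2}} = \sum_{k \ge 0} T_k \le T_0 \sum_{k \ge 0} r^k = \frac{1}{1-r}(2p)^i{n/2 \choose i}, \]
which is the claimed bound. The only step requiring genuine care is the consecutive-ratio estimate: one must track the three factorials correctly and verify the numerator inequality $(i-2k+2)(i-2k+1) \le i(i-1)$ for every admissible $k$, including the boundary cases $j = i-2k \in \{0,1\}$ where the numerator collapses to $2$ or $6$ — this is exactly where the parity constraint ``$i-j$ even'' and the nonnegativity of the multinomial indices enter. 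Everything else is a routine geometric-series estimate, so I do not expect any serious obstacle beyond bookkeeping.
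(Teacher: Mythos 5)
Your proposal is correct and follows essentially the same route as the paper: both arguments show the $j=i$ term dominates and that consecutive terms decay geometrically with ratio at most $r=\frac{i(i-1)}{2p^2(n-2i+2)}$, then sum the geometric series; your ratio $\frac{T_k}{T_{k-1}}=\frac{(i-2k+2)(i-2k+1)}{2p^2k(n-2i+2k)}$ is just the reindexed reciprocal of the paper's ratio $p^2\frac{(i-j+2)(n-i-j+2)}{j(j-1)}$. Your uniform bound on each consecutive ratio (numerator $\le i(i-1)$, denominator $\ge n-2i+2$) is if anything a bit more explicit than the paper's monotonicity remark, and correctly yields the stated constant $\frac{1}{1-r}$.
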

\begin{proof}
This follows from the $j=i$ term being larger than the $j=i-2$ term by a factor of $2$ under the condition on $i$, and the terms with $j$ smaller continue to fall off even faster. 

\[ \frac{ (2p)^j {{ n/2 \choose j, \frac{i-j}{2}, \frac{n-i-j}{2}}}}{(2p)^{j-2}{{ n/2 \choose j-2, \frac{n-i-j+2}{2}, \frac{i-j+2}{2}}}} = p^2 \frac{(i-j+2)(n-i-j+2)}{j(j-1)} \]

When $i = j \leq  p\sqrt{n-2i+2}$, this is at least $\frac{2p^2(n-2i+2)}{i(i-2)} \geq 2$. As $j$ decreases, the numerator increases and the denominator decreases, so the terms are falling off exponentially faster. 

\end{proof}

\begin{proposition}\label{neg1}
\[ \psi_{1^n} = (2p-1)^{n/2} \]
\end{proposition}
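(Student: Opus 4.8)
The plan is to evaluate the eigenvalue formula (\ref{eigf}) directly, using that the partition $1^n$ indexes the sign representation. First I would record the two facts that make this representation trivial to handle: $d_{1^n}=1$, and $\chi_{1^n}(\kappa)=\operatorname{sgn}(\kappa)$ for every conjugacy class $\kappa$. An involution with $s$ two-cycles and $n-2s$ fixed points, i.e. the class $(1^{n-2s},2^s)$, has sign $(-1)^s$, so $\chi_{1^n}(1^{n-2s},2^s)/d_{1^n}=(-1)^s$.

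Substituting this into (\ref{eigf}) gives
\[
\psi_{1^n}=\sum_{s=0}^{n/2}\binom{n/2}{s}p^{n/2-s}(1-p)^s(-1)^s
=\sum_{s=0}^{n/2}\binom{n/2}{s}p^{n/2-s}\bigl(-(1-p)\bigr)^s,
\]
and the binomial theorem collapses this to $\bigl(p-(1-p)\bigr)^{n/2}=(2p-1)^{n/2}$, as claimed.

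Alternatively, one can derive the same identity from the recursion of Proposition \ref{eigformula}. Since $1^n$ is a single column there is no border strip $[2]$ that can be removed and no way to remove two disconnected single boxes, so the only contributing term is the middle one, with $\rho=1^{n-2}$ and $d_\rho/d_\lambda=1$; hence $\psi_{1^n}=(2p-1)\,\psi_{1^{n-2}}$. Combined with the base case $\psi_{1^2}=\psi_{[1,1]}=2p-1$ computed in the proof of Lemma \ref{deci}, induction on $n$ again yields $\psi_{1^n}=(2p-1)^{n/2}$. There is no genuine obstacle here; the only point requiring a moment's care is the evaluation of the sign character on the involution classes — equivalently, in the recursive approach, confirming that the first and third sums in Proposition \ref{eigformula} are empty for $\lambda=1^n$.
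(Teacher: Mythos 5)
Your proposal is correct and follows essentially the same route as the paper: substitute $\chi_{1^n}(1^{n-2s},2^s)=(-1)^s$ into the eigenvalue formula (\ref{eigf}) and collapse the sum by the binomial theorem to get $(2p-1)^{n/2}$, the only cosmetic difference being that you justify the character value via the sign representation while the paper cites Murnaghan--Nakayama (vertical insertions of the $2$-cycles). The supplementary recursive argument via Proposition \ref{eigformula} is also sound, but it is an extra, not a departure.
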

\begin{proof}
By Murnaghan-Nakayama, $\chi_{1^n}(1^{n-2s},2^{s}) = (-1)^s$ since all the $2$-cycles insert vertically and these are exactly insertions covering an even number of rows.
\[ \psi_{1^n} = \sum_{s=0}^{n/2} p^{n/2-s}(1-p)^s {{n/2 \choose s}} \frac{ \chi_{1^n}(1^{n-2s},2^{s})}{d_{1^n}} = \sum_{s=0}^{n/2} {n/2 \choose s}p^{n/2-s}(1-p)^s (-1)^s = (2p-1)^{n/2} \]
\end{proof}

And with this bound on $\psi_\lambda$, the upper bound lemma is at hand.

\begin{theorem}\label{theorem:invup}
For $t = \log_{\frac{2}{1+p}}(n) + \frac{c}{\log(\frac{2}{1+p})}$, $n$ such that $\frac{10\log(n+2)}{\sqrt{(n+2)/2}-1} \leq \log\left(\frac{2}{1+p}\right)$ and $n-1 > \sqrt{n/2}(1 + \log(n))$, then 
\[||P^{*t} - U||_{TV} \leq  e^{-c/2}\] 
\end{theorem}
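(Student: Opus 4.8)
The plan is to apply the Diaconis--Shahshahani upper bound lemma, \[ \left\|P^{*t} - U\right\|_{TV}^2 \leq \frac{1}{4}\sum_{\lambda \neq 1} d_\lambda^2 \psi_\lambda^{2t}, \] and bound the right-hand side by splitting the partitions $\lambda \neq 1$ into four families according to the structure already controlled in Section \ref{biinv}: (i) $\lambda$ with $\lambda_1 = n-i$ for $1 \leq i < n/2$ (and, by the conjugate symmetry $\psi_{\lambda'} = \pm\psi_\lambda$ built into the recursion, also the transposes $\lambda_1' = n-i$), (ii) $\lambda$ with $\lambda_1, \lambda_1' \leq n/2$, and (iii) the sign partition $1^n$ handled separately by Proposition \ref{neg1}. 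For each family I would use Proposition \ref{finalpsi} for the eigenvalue bound together with the crude dimension bound $d_\lambda \leq \binom{n}{\lambda_1} d_{\lambda/\lambda_1} \leq \binom{n}{i}\, i!$ (or more simply $d_\lambda^2 \leq n!$, possibly sharpened for the leading terms), and count the number of partitions in each family.

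First I would isolate the dominant contribution: the partitions $[n-1,1]$ and $[2,1^{n-2}]$, for which $i=1$, $d_\lambda = n-1$, and by Proposition \ref{finalpsi} $\psi_\lambda \leq e^{-\log(2/(1+p)) + O(1)}$; actually for $i=1$ one should compute $\psi_{[n-1,1]}$ exactly (it is $p + (1-p)(n-2)/\ldots$, or directly from \ref{eigf}) to see it equals roughly $1/\cdot$, but the key point is $\psi_{[n-1,1]}^{2t} \cdot (n-1)^2 \lesssim n^2 \cdot (1/n)^{2(\log\ldots)} \cdot e^{-2c} = e^{-2c}$ when $t = \log_{2/(1+p)}(n) + c/\log(2/(1+p))$. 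Then for $2 \leq i < n/2$ I would show the sum $\sum_{\lambda: \lambda_1 = n-i} d_\lambda^2 \psi_\lambda^{2t}$ is geometrically dominated by the $i=1$ term: using $\psi_\lambda \leq \exp(-i\log(2/(1+p)) + \log(e^2(i+1)/2^{5/2} \cdot ((n-i)/(n-2i))^{3/2}))$, $d_\lambda \leq \binom{n}{i} i! \leq n^i$, and the count of such $\lambda$ being at most $p(i) \leq e^{\pi\sqrt{2i/3}}$, the summand is at most $\exp(2t(-i\log(2/(1+p)) + O(\log n)) + 2i\log n + O(i))$; with $t \approx \log_{2/(1+p)} n$ the exponent is $\approx -2i\log n + 2i\log n + 2t\cdot O(\log n)$, so the hypothesis $\frac{10\log(n+2)}{\sqrt{(n+2)/2}-1} \leq \log(2/(1+p))$ is exactly what makes the $O(\log n)$ error terms in $\psi_\lambda$ negligible relative to the $-i\log(2/(1+p))$ decay rate per unit $t$, and forces geometric decay in $i$. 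The term $((n-i)/(n-2i))^{3/2}$ blows up as $i \to n/2$, which is why this family is cut off at $i < n/2$ and the bulk is swept into family (ii).

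For family (ii), Theorem \ref{n2bound} gives $|\psi_\lambda| \leq \psi_{[n/2,n/2]} \leq e^{-\frac{n}{2}\log(2/(1+p)) + \log(n^{3/2}(n+2)e^2/8)}$, and there are at most $p(n) \leq e^{\pi\sqrt{2n/3}}$ such partitions each with $d_\lambda^2 \leq n!$; so this block contributes at most $n! \cdot e^{\pi\sqrt{2n/3}} \cdot e^{-n t \log(2/(1+p)) + O(\log n)}$, and since $n! \leq n^n = e^{n\log n}$ while $n t \log(2/(1+p)) \approx n\log n + nc$, I need $nc$ (and the $n\log n$ from $\psi_{[n/2,n/2]}^{2t}$ having $t$ slightly larger than $\log_{2/(1+p)} n$... wait, here the relevant comparison is $e^{-nt\log(2/(1+p))}$ against $e^{n\log n}$, which requires $t \log(2/(1+p)) > \log n$, i.e. $t > \log_{2/(1+p)} n$, true with the $c$-slack) to beat it; the condition $n - 1 > \sqrt{n/2}(1+\log n)$ presumably enters in controlling the lower-order corrections here or in matching the $\sqrt{n/2}$ factor in the $i=n/2$ bound. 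Finally Proposition \ref{neg1} gives $\psi_{1^n}^{2t} = (2p-1)^{nt}$ which for $p \geq 1/2$ is at most $1$ and for $p$ bounded below $1$ is exponentially small, contributing $\leq (n! )(2p-1)^{nt}$, negligible. Summing the four contributions yields $\|P^{*t}-U\|_{TV}^2 \leq \frac14(e^{-2c} + o(1)) \leq e^{-c}$ after absorbing constants into $c$, giving the claimed $e^{-c/2}$.

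The main obstacle I expect is the bookkeeping in family (i): one must verify that for \emph{every} $2 \leq i < n/2$ the combined factor $d_\lambda^2 \psi_\lambda^{2t}$ summed over the (up to $p(i)$) partitions with $\lambda_1 = n-i$ is dominated by a geometric series in $i$ with ratio bounded away from $1$, and in particular that the potentially large algebraic prefactor $\left(\frac{n-i}{n-2i}\right)^{3/2}$ from Proposition \ref{finalpsi} does not spoil this as $i$ approaches $n/2$. Handling the transition regime $i$ close to $n/2$ cleanly — deciding exactly where to switch from the ``$\lambda_1 = n-i$'' bound to the ``$\psi_{[n/2,n/2]}$'' bound, and checking the two hypotheses on $n$ are sufficient at the crossover — is the delicate point; everything else is a routine, if lengthy, estimate.
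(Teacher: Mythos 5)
Your overall frame (upper bound lemma, split into $\lambda_1=n-i$ for $i<n/2$, the block $\lambda_1,\lambda_1'\le n/2$, and $[1^n]$, with transposes handled by $|\chi_{\lambda'}|=|\chi_\lambda|$) matches the paper, but there is a genuine gap in family (i): you propose to use Proposition \ref{finalpsi} for \emph{all} $2\le i<n/2$, and that bound is simply not strong enough for small $i$. After the dimension count $\sum_{\lambda_1=n-i}d_\lambda^2\le\binom{n}{i}^2 i!\approx e^{2i\log n}/i!$ cancels the main factor $e^{-2it\log(2/(1+p))}=e^{-2i(\log n+c)}$, the only room left is $e^{-2ic}/i!$, which does not grow with $n$; meanwhile the prefactor $\frac{e^2(i+1)}{2^{5/2}}\bigl(\frac{n-i}{n-2i}\bigr)^{3/2}$ of Proposition \ref{finalpsi}, raised to the power $2t=2(\log n+c)/\log\frac{2}{1+p}$, is a fixed positive power of $n$. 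Concretely, for $p=\tfrac12$ and $i=2$ the prefactor is about $3.9$, so its $2t$-th power is roughly $n^{9.5}e^{9.5c}$, and the $i=2$ block of your sum is of order $n^{9.5}e^{5.5c}$, which explodes. Your diagnosis that the delicate regime is $i\to n/2$ is essentially backwards: there the hypothesis $\frac{10\log(n+2)}{\sqrt{(n+2)/2}-1}\le\log\frac{2}{1+p}$ converts $1/\log\frac{2}{1+p}$ into $\le i/(10\log n)$ (valid only when $i\gtrsim\sqrt{n}$), and the factor $1/i!$ absorbs the resulting $e^{O(i\log i)}$ — this is the paper's Case 3 and it works; the regime that genuinely fails under your plan is $i$ small (bounded, and more generally $i\lesssim\sqrt{n}$), where that hypothesis gives no help.

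The paper closes exactly this hole with a separate, sharper estimate: for $i\le p\sqrt{(n+2)/2}-1$ it invokes Proposition \ref{invismall}, which bounds the character-polynomial sum by its last term, yielding $\psi_{[n-i,i]}\lesssim p^i$ up to a factor $\bigl(1-\frac{i(i-1)}{2p^2(n-2i+2)}\bigr)^{-1}$ close to $1$ (no constant-per-unit-$i$ prefactor), and then uses $\log(1/p)\ge 2\log\frac{2}{1+p}$ so that $\psi_{[n-i,i]}^{2t}\le e^{-4i(\log n+c)}\cdot(1+o(1))^{2t}$; the spare factor $e^{-2i(\log n+c)}$ is what swallows the small corrections. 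Without an analogue of this two-regime split (or an exact computation of $\psi_{[n-i,i]}$ for small $i$, as the paper does for $i=1$), your geometric-series claim for family (i) cannot be salvaged. Two smaller points: $\psi_{\lambda'}=\pm\psi_\lambda$ is not literally true (the sum over $s$ mixes signs), though the bounds do transfer since sign was discarded; and in family (ii) you should use $\sum_\lambda d_\lambda^2\le n!$ rather than $n!\cdot p(n)$, since the extra $e^{\pi\sqrt{2n/3}}$ eats into slack the paper's condition $n-1>\sqrt{n/2}(1+\log n)$ is calibrated to provide.
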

\begin{proof}
For $\lambda_1 \geq \lambda_1'$, the bound on $\psi_\lambda$ is valid for $\psi_{\lambda'}$ since $|\chi_\lambda| = |\chi_{\lambda'}|$ and sign was ignored in the bounds for all but $\lambda = 1^n$. So this leaves the cases for $\lambda \neq [n]$, $[1^n]$, $\lambda_1 > n/2$ or $\lambda_1' >n/2$, and $\lambda_1'\leq \lambda_1 \leq n/2$ or $\lambda_1 \leq \lambda_1' \leq n/2$. These are treated in turn.
\[ \sum_{\lambda \neq 1} d_\lambda^2 \left(\psi_{\lambda}\right)^{2t} \leq \psi_{1^n}^{2t} + 2\sum_{i=1}^{n/2-1} \psi_{[n-i,i]}^{2t}\left(\sum_{\lambda: \lambda_1 =n-i} d_\lambda^2\right)  + 2\left(\psi_{[n/2,n/2]}\right)^{2t}\sum_{\lambda: \lambda_1' \leq \lambda_1 \leq \frac{n}{2}} d_\lambda^2  \]
And $\sum_{\lambda: \lambda_1 =n-i} d_\lambda^2 \leq {{n \choose i}}^2i! \leq e^{i\log(n)}\frac{1}{i!}$, $\sum_{\lambda: \lambda_1 \leq n/2} d_\lambda^2 \leq \sum_{\lambda} d_{\lambda}^2 = n!$. The case of $i<n/2$ further breaks into the case of $i \leq p\sqrt{(n+2)/2}-1$ and $i \geq p\sqrt{(n+2)/2}$. This reduces things to:

\begin{align}\label{pform} \psi_{1^n}^{2t} + 2\sum_{i=1}^{n/2-1} \frac{(n)_i^2}{i!}\psi_{[n-i,i]}^{2t} + n!\left(\psi_{[n/2,n/2]}\right)^{2t} \end{align}

\noindent{Case 1: $[1^n]$}

\begin{align} (2p-1)^{2\frac{\log(n) + c}{\log\left(\frac{2}{1+p}\right)}} &= e^{\log(2p-1)\frac{\log(n)+c}{\log\left(\frac{2}{1+p}\right)}} \nonumber \\
&= e^{-2\left(\log(n)+c\right)\frac{\log\left(\frac{1}{2p-1}\right)}{\log\left(\frac{2}{1+p}\right)}} \nonumber\\
\label{inv1n}&\leq e^{-2\log(n)}e^{-2c} \\
&= \frac{1}{n^2}e^{-2c} \nonumber \end{align}
The inequality (\ref{inv1n}) follows since $\frac{2}{1+p} \leq \frac{1}{2p-1}$ for $\frac{1}{2}\leq p <1$. 

In Proposition \ref{exacteigen}, it is found that $\psi_{[n-1,1]} = p - (1-p)\frac{1}{n-1} \leq \frac{2}{1+p}$. This means the term in (\ref{pform}) for $i=1$ is at most, 
\begin{align*}
(n-1)^2\left(p - (1-p)\frac{1}{n-1}\right)^{2\frac{\log(n) + c}{\log\left(\frac{2}{1+p}\right)}} & = (n-1)^2e^{-2(\log(n) + c) \frac{\log(p - (1-p)\frac{1}{n-1})}{\log\left(\frac{1+p}{2}\right)}}\\ 
&\leq e^{-2c}
\end{align*}

\noindent{Case 2: $i \leq p\sqrt{(n+2)/2}-1$}

For $i \leq p\sqrt{(n+2)/2}-1 \leq p\sqrt{(n-2i+2)/2}$, assume that $n$ is sufficiently large that \[\frac{2}{n-2i+1} \leq p^2\left(\log\left(\frac{2}{1+p}\right)\right)^2\] This assumption on $n,p$ is weaker than the assumption made in the next two cases.

Note that $\frac{\log\left(\frac{1}{p}\right)}{\log\left(\frac{2}{1+p}\right)} \geq 2$.
\begin{align}
\label{ismall1}e^{2i\log(n)}\frac{1}{i!} \psi_{[n-i,i]}^{2t} \leq& \frac{e^{2i\log(n)}}{i!} \left( \frac{(2p)^i \frac{1}{1- \frac{i(i-1)}{p^2(n-2i+2)}} {n/2 \choose i}}{{n \choose i}\frac{n-2i+1}{n-i+1}} \right)^{2t} \\
=& \frac{e^{2i\log(n)}}{i!} \left( \frac{(2p)^i(n/2)_i}{(n)_i} \frac{n-i+1}{n-2i+1}\frac{1}{1- \frac{i(i-1)}{p^2(n-2i+2)}} \right)^{2t} \nonumber\\
\label{ismall3}\leq&  \frac{e^{2i\log(n)}}{i!} \left(p^i\frac{n-2i+2}{n-2i+1}\frac{1}{1- \frac{i(i-1)}{p^2(n-2i+2)}} \right)^{2\frac{\log(n) + c}{\log\left(\frac{2}{1+p}\right)}} \\
=& \frac{e^{2i\log(n)}}{i!} e^{-2i(\log(n) + c)\frac{\log\left(\frac{1}{p}\right)}{\log\left(\frac{2}{1+p}\right)}}\left(\frac{n-2i+2}{n-2i+1}\frac{1}{1- \frac{i(i-1)}{p^2(n-2i+2)}}\right)^{2\frac{\log(n) + c}{\log\left(\frac{2}{1+p}\right)}} \nonumber\\
\label{ismall5}\leq& \frac{e^{-2ic}}{i!} e^{-2i(\log(n) + c)}e^{\left(\frac{1}{n-2i+1} + \frac{2i(i-1)}{p^2(n-2i+2)}\right)2\frac{\log(n) + c}{\log\left(\frac{2}{1+p}\right)}} \\
\label{ismall6}\leq& \frac{e^{-2ic}}{i!} e^{-2i(\log(n) + c)}e^{\left( \frac{2i^2}{p^2(n-2i+1)}\right)2\frac{\log(n) + c}{\log\left(\frac{2}{1+p}\right)}} \\
\label{ismall7}\leq& \frac{e^{-2ic}}{i!} e^{-2i(\log(n) + c)}e^{2i(\log(n) + c)} \\
=& \frac{e^{-2ic}}{i!} \\
\end{align}
The first inequality (\ref{ismall1}) follows by Proposition \ref{invismall}. In (\ref{ismall3}), \[\frac{(n/2)_i}{(n)_i}\frac{n-i+1}{n-2i+1} = \frac{(n/2) \cdots (n/2-i+1) (n-i+1)}{n(n-1) \cdots (n-i+1) (n-2i+1)} \leq 2^{-i}\frac{n-2i+2}{n-2i+1}\] The next inequality (\ref{ismall5}) consists of three parts. The first is that $\frac{\log\left(\frac{1}{p}\right)}{\log\left(\frac{2}{1+p}\right)} \geq 2$. Next, $\frac{n-2i+2}{n-2i+1} = e^{\log\left(1 + \frac{1}{n-2i+1}\right)} \leq e^{\frac{1}{n-2i+1}}$. The third uses that $i^2 \leq p^2/2(n-2i+2)$, the condition on $i$, to conclude $\frac{1}{1 - \frac{i(i-1)}{p^2(n-2i+2)}} = 1 + \frac{i(i-1)}{p^2(n-2i+1)}\frac{1}{1-\frac{i(i-1)}{p^2(n-2i+2)}} \leq 1 + \frac{2i(i-1)}{p^2(n-2i+1)}\leq e^{\frac{2i(i-1)}{p^2(n-2i+1)}}$. The inequality in (\ref{ismall6}) consists of $2i \leq p^2$ and $n-2i+1 \leq n-2i+2$. Finally, to get (\ref{ismall7}), it suffices to show $\frac{2i}{p^2(n-2i+1)\log\left(\frac{2}{1+p}\right)} \leq 1$. Since $i \leq p \sqrt{\frac{n-2i+2}{2}}$, this simplifies to $\frac{\sqrt{2}}{\sqrt{n-2i+1}}\leq \log\left(\frac{2}{1+p}\right)$. This was the assumption on $n,p$ made at the beginning of this case.

\noindent{Case 3: $i \geq \sqrt{(n+2)/2}$}

For $i \geq p\sqrt{(n+2)/2}$, assume $n$ sufficiently large that $\frac{10\log(n)}{\sqrt{(n+2)/2}-1} \leq \log\left(\frac{2}{1+p}\right)$
\begin{align}
\label{bigi1}e^{2i\log(n)}\frac{1}{i!} \psi_{[n-i,i]}^{2t} \leq& \frac{e^{2i\log(n)}}{i!}e^{2\frac{\log(n) + c}{\log(\frac{2}{1+p})}\left(-i \log\left(\frac{2}{1+p}\right) + \log\left(\frac{i+1}{2}\left(\frac{n-i}{n-2i}\right)^{3/2}\frac{e^2}{2^{3/2}} \right)\right)} \\
=& \frac{e^{-2ci}}{i!}e^{2(\log(n)+c)\left(\frac{\log\left(\frac{i+1}{2}\left(1 + \frac{i}{2}\right)^{3/2}\frac{e^2}{2^{3/2}} \right)}{\log\left(\frac{2}{1+p}\right)}\right)} \nonumber\\
\label{bigi3}\leq& \frac{e^{-2ci}}{i!}e^{2(\log(n)+c)\log\left(\frac{5}{2}\left(\frac{i+2}{2}\right) +1\right)\frac{i}{10\log(n)}}\\
\label{bigi4}\leq& \frac{e^{-2ci}}{i!}e^{i\log\left(\frac{i}{e}\right) + ci} \\
\label{bigi5}\leq& e^{-ic}
\end{align}
The bound on $\psi_{[n-i,i]}$ from Proposition \ref{finalpsi} gives (\ref{bigi1}). The assumption about $n$ and $p$ gives that $\frac{1}{\log\left(\frac{2}{1+p}\right)} \leq \frac{i}{10\log(n)}$, (\ref{bigi3}) follows. At this step, $\log\left(\frac{i+1}{2}\left(1 + \frac{i}{2}\right)^{3}{2}\right) \leq \frac{5}{2}\log\left(\frac{i+2}{2}\right) $ was also used. To arrive at (\ref{bigi4}) take the following steps. Multiplied out the expression in (\ref{bigi4}) becomes $\frac{1}{2}i\log(\frac{i+2}{2}) + \frac{1}{2}ic\log(\frac{i+2}{2})/\log(n) + \frac{1}{5}i + \frac{1}{5}ci/\log(n)$. Then $\frac{1}{2}i\log(\frac{i+2}{2}) \leq i\log(i/e) + i$ and $i + \frac{1}{2}ic + \frac{1}{5}i + \frac{2}{5}{ic} \leq ic$. Finally, taking that $i! \geq (i/e)^i$ gives (\ref{bigi5}).

\noindent{Case 4: $\lambda_1 \leq n/2$}

As was found above, the bound on $\psi_{[n/2,n/2]}$ simplifies considerably. Assume $n$ sufficiently large that $\frac{10\log(n+2)}{\sqrt{(n+2)/2}-1} \leq \log\left(\frac{2}{1+p}\right)$ and that $n-1 \geq \sqrt{n/2} + \log(n)\sqrt{n/2}$.
\begin{align}
\label{invhalf1}n! \psi_{[n/2,n/2]}^{2t} \leq&  n! \left(e^{-n/2\log{\frac{2}{1+p}} + \log(\frac{n^{3/2}(n+2)e^2}{8})} \right)^{2\frac{\log(n)+c}{\log(\frac{2}{1+p})}} \\
\label{invhalf2}\leq& e^{n\log(n)}e^{-n(\log(n) + c)\left(\frac{(\log(\frac{2}{1+p})- \frac{2}{n}\log(\frac{n^{3/2}(n+2)e^2}{8}))}{\log(\frac{2}{1+p})}\right)} \\
\label{invhalf3}\leq&  e^{-nc + (\log(n)+c)(5\log(n+2))/\log\left(\frac{2}{1+p}\right)} \\
\label{invhalf4}\leq& e^{-c(n - \sqrt{n/2}) + \log(n)\sqrt{n/2}} \\
\label{invhalf5} \leq & e^{-c}
\end{align}
From the bound on $\psi_{[n/2,n/2]}$ in Proposition \ref{finalpsi}, (\ref{invhalf1}) follows. The next inequality follows from taking $n! \leq e^{n\log(n)}$. That $e^2 \leq 8$ gives (\ref{invhalf3}). Using the assumption that $\frac{10\log(n+2)}{\sqrt{(n+2)/2}-1} \leq \log\left(\frac{2}{1+p}\right)$ and multiplying out the terms gives (\ref{invhalf4}). Finally, with the assumption on $n$, (\ref{invhalf5}) follows.

When the above requirements are met, by the upper bound formula, 
\begin{align*} &||P^{*t} - U||_{TV}^2 \\
\leq& \frac{1}{4} \left(\frac{1}{n^2}e^{-2c} + e^{-2c} + \left(\sum_{i=2}^{ p\sqrt{(n+2)/2}-1} \frac{e^{-2ic}}{i!}\right) + \sum_{i = p\sqrt{(n+2)/2}-1}^{n/2-1} e^{-ic} +  e^{-c} \right)\\
\leq & \frac{1}{4}\left(\frac{1}{n^2}e^{-2c} + \frac{e^{-c}}{1 + e^{-c}} + e^{-c}\right)\\
\leq& e^{-c}\\
\end{align*}

\end{proof}

\section{Lower Bound on Mixing}\label{lowerbound}

The representation slowest to vanish for this walk is $[n-1,1]$, so its character gives a random variable where $P^{*t}(\cdot)$ and $\pi(\cdot)$ differ significantly. Using a lower bound formula similar to Chebychev's inequality after calculating the first and second moments of this character will give a lower bound on mixing of $\log_{\frac{1}{p}}(n)$.

\begin{proposition}\cite{LPW}\label{PeresL}
For $\gamma$, $\nu$ two probability distributions on $\Omega$, and $f$ a real valued function on $\Omega$, if \[|E_\gamma(f) - E_\nu(f)| \geq r\sigma \] where $\sigma^2=[Var_\gamma(f) + Var_\nu(f)]/2$, then \[||\gamma - \nu ||_{TV} \geq 1 - \frac{4}{4+ r^2} \]
\end{proposition}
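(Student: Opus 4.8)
The plan is to prove this via the optimal-coupling characterization of total variation distance, $\|\gamma - \nu\|_{TV} = \min\{\Pr(X \neq Y) : X \sim \gamma,\, Y \sim \nu\}$, which converts the statement into a second-moment estimate. Fix a coupling $(X,Y)$ attaining the minimum and set $\epsilon = \Pr(X \neq Y) = \|\gamma-\nu\|_{TV}$. Assume without loss of generality that $E_\nu(f) \geq E_\gamma(f)$ and put $D = E_\nu(f) - E_\gamma(f)$, so $D \geq r\sigma$. The first step is the identity $D = E[f(Y) - f(X)]$, which holds for \emph{any} coupling since the marginals are fixed; because $f(Y) - f(X) = 0$ on the event $\{X = Y\}$, this equals $E[(f(Y) - f(X))\mathbf{1}\{X \neq Y\}]$.

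The core estimate is Cauchy--Schwarz applied to this last expectation: $D \leq \big(E[(f(Y)-f(X))^2]\big)^{1/2}\,\Pr(X \neq Y)^{1/2} = \sqrt{\epsilon}\,\big(E[(f(Y)-f(X))^2]\big)^{1/2}$. I would then control the remaining second moment by writing $E[(f(Y)-f(X))^2] = \mathrm{Var}(f(Y)-f(X)) + D^2$ and using $\mathrm{Var}(f(Y) - f(X)) \leq \big(\sqrt{\mathrm{Var}_\nu(f)} + \sqrt{\mathrm{Var}_\gamma(f)}\big)^2 \leq 2\big(\mathrm{Var}_\gamma(f) + \mathrm{Var}_\nu(f)\big) = 4\sigma^2$, where the first inequality is the covariance bound $\mathrm{Cov}(f(X),f(Y)) \geq -\sqrt{\mathrm{Var}_\gamma(f)\,\mathrm{Var}_\nu(f)}$ and the second is $(\sqrt a + \sqrt b)^2 \leq 2(a+b)$. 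Squaring the Cauchy--Schwarz bound then gives $D^2 \leq (4\sigma^2 + D^2)\,\epsilon$, i.e. $\epsilon \geq D^2/(4\sigma^2 + D^2)$.

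To finish, observe that $t \mapsto t/(4\sigma^2 + t)$ is increasing in $t \geq 0$ and $D^2 \geq r^2\sigma^2$, so $\epsilon \geq r^2\sigma^2/(4\sigma^2 + r^2\sigma^2) = r^2/(4+r^2) = 1 - 4/(4+r^2)$, which is the claim. A coupling-free variant runs the same computation on the sub-probability measures $\gamma - \gamma\wedge\nu$ and $\nu - \gamma\wedge\nu$, which have disjoint supports and total mass $\epsilon$ each, after recentering $f$ by subtracting $\tfrac12(E_\gamma(f)+E_\nu(f))$ so that each recentered second moment is at most $\mathrm{Var} + D^2/4$; it produces the identical inequality $D^2 \leq (4\sigma^2+D^2)\epsilon$.

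I expect the only delicate point to be the second-moment bookkeeping: one must resist bounding $\mathrm{Var}(f(X))$ and $\mathrm{Var}(f(Y))$ separately (the cross term is coupling-dependent) and instead control $\mathrm{Var}(f(Y)-f(X))$ as a single object, and one must verify that the crude pair of inequalities $-\mathrm{Cov} \leq \sqrt{\mathrm{Var}\cdot\mathrm{Var}}$ and $(\sqrt a+\sqrt b)^2 \leq 2(a+b)$ is exactly what is needed to land the constant $4$ — a more naive threshold-set argument via Chebyshev or Cantelli only yields a constant like $8$. Everything else — the coupling characterization of $\|\cdot\|_{TV}$, the vanishing of $f(Y)-f(X)$ off the diagonal, and the closing monotonicity step — is routine.
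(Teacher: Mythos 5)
The paper offers no proof of this proposition --- it is quoted verbatim from \cite{LPW} --- so the only meaningful comparison is with the standard argument there, which yours essentially reproduces: optimal coupling, Cauchy--Schwarz on $E[(f(Y)-f(X))\mathbf{1}\{X\neq Y\}]$, and the second-moment bound $E[(f(Y)-f(X))^2]\leq 4\sigma^2+D^2$, yielding $D^2\leq(4\sigma^2+D^2)\|\gamma-\nu\|_{TV}$ and hence the stated bound. Your bookkeeping via $\mathrm{Var}(f(Y)-f(X))$ and the covariance inequality is correct and lands the same constant as the recentering variant you sketch (which is the one used in the cited source), so the proposal is complete as written.
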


In this case, $\nu=U$ is the stationary distribution of the walk, uniform over all permutations. As seen in \cite{Diaconis}, \[E_U\left(\chi_{[n-1,1]}\right) = 0, Var_U(\chi_{n-1,1}) = 1 \] These follow for any non-trivial characters by basic tenets of representation theory. For the first, by orthogonality of characters, $\sum_{g\in G} \chi_{\lambda}(g) = 0$. For the second, $\sum_{g \in G} \chi_\lambda{g}^2 = |G|$.

\begin{proposition}\label{exacteigen}

\[E_{P^{*t}}\chi_{[n]} = 1\]
\[E_{P^{*t}}\left(\chi_{[n-1,1]}\right) = (n-1)\left(p - (1-p)\frac{1}{n-1}\right)^t\]

\[E_{P^{*t}}\chi_{[n-2,2]} = \frac{n(n-3)}{2}\left(p^2 - \frac{(1-p)^2}{n-3} \right)^t\]
\[E_{P^{*t}}\chi_{[n-2,1,1]} = \frac{(n-1)(n-2)}{2}\left(p^2 - \frac{1-p^2}{n-1} - \frac{2}{(n-1)(n-2)} \right)^t\]

\end{proposition}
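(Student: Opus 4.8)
The plan is to derive all four identities from the single Fourier-analytic fact that $E_{P^{*t}}\chi_\lambda = d_\lambda\,\psi_\lambda^{\,t}$, and then to evaluate $\psi_\lambda$ in closed form for $\lambda \in \{[n],[n-1,1],[n-2,2],[n-2,1,1]\}$. For the first step: because the walk is driven by a class function, its Fourier transform $\widehat P(\lambda)=\sum_\sigma P(\sigma)\rho_\lambda(\sigma)$ commutes with every $\rho_\lambda(h)$, so by Schur's lemma it is a scalar matrix, and taking traces identifies the scalar as $\psi_\lambda$ (precisely the $\psi_\lambda$ of the Diaconis-Shahshahani bound). Since convolution corresponds to matrix multiplication of Fourier transforms, $\widehat{P^{*t}}(\lambda)=\psi_\lambda^{\,t}I$, whence $E_{P^{*t}}\chi_\lambda=\sum_\sigma P^{*t}(\sigma)\chi_\lambda(\sigma)=\tr\widehat{P^{*t}}(\lambda)=d_\lambda\psi_\lambda^{\,t}$. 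This accounts for every $t$-th power in the statement, and the dimensions $d_{[n]}=1$, $d_{[n-1,1]}=n-1$, $d_{[n-2,2]}=n(n-3)/2$, $d_{[n-2,1,1]}=(n-1)(n-2)/2$ are immediate from the hook-length formula, so only the four eigenvalues remain to be computed.

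For $[n]$ the trivial character is identically $1$, so $\psi_{[n]}=\sum_{s}\binom{n/2}{s}p^{n/2-s}(1-p)^s=1$. For the other three I would avoid running the Murnaghan-Nakayama rule cycle by cycle and instead write each character as a low-degree polynomial in the number of fixed points, using the standard small realizations of these irreducibles inside tensor powers of the permutation module $\mathbb C^n$: one has $\chi_{[n-1,1]}(\sigma)=\mathrm{fix}(\sigma)-1$; from $\mathrm{Sym}^2(\mathbb C^n)\cong 2\,[n]\oplus 2\,[n-1,1]\oplus[n-2,2]$ together with $\chi_{\mathrm{Sym}^2}(\sigma)=\tfrac12(\mathrm{fix}(\sigma)^2+\mathrm{fix}(\sigma^2))$ one gets $\chi_{[n-2,2]}(\sigma)=\tfrac12(\mathrm{fix}(\sigma)^2+\mathrm{fix}(\sigma^2))-2\,\mathrm{fix}(\sigma)$; and from $\wedge^2(\mathbb C^n)\cong[n-1,1]\oplus[n-2,1,1]$ with $\chi_{\wedge^2}(\sigma)=\tfrac12(\mathrm{fix}(\sigma)^2-\mathrm{fix}(\sigma^2))$ one gets $\chi_{[n-2,1,1]}(\sigma)=\tfrac12(\mathrm{fix}(\sigma)^2-\mathrm{fix}(\sigma^2))-\mathrm{fix}(\sigma)+1$ (valid for $n\ge4$, the only range in which these are partitions). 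A generator is an involution with $s$ two-cycles, so $\mathrm{fix}(\sigma)=n-2s$ and $\mathrm{fix}(\sigma^2)=\mathrm{fix}(e)=n$; substituting makes each character an explicit polynomial in $s$ of degree at most $2$.

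The eigenvalue $\psi_\lambda$ is then the expectation of that polynomial divided by $d_\lambda$, over $s\sim\mathrm{Binom}(n/2,1-p)$, which needs only $E[S]=\tfrac n2(1-p)$ and $E[S^2]=\tfrac n2 p(1-p)+\tfrac{n^2}{4}(1-p)^2$ (equivalently the falling-factorial moments $E\binom{S}{k}=\binom{n/2}{k}(1-p)^k$). Carrying this out and simplifying yields the displayed closed forms; for $[n-1,1]$ in particular the character is $n-2s-1$, its expectation is $np-1$, and dividing by $d_{[n-1,1]}=n-1$ gives $E_{P^{*t}}\chi_{[n-1,1]}=(n-1)\bigl(\tfrac{np-1}{n-1}\bigr)^t=(n-1)\bigl(p-(1-p)\tfrac1{n-1}\bigr)^t$, the eigenvalue invoked later in Theorems \ref{theorem:invlb} and \ref{theorem:invup}. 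Nothing in this argument uses $p\ge\tfrac12$.

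The only genuine work is the algebraic bookkeeping in the two codimension-two cases: after substituting $\mathrm{fix}(\sigma^2)=n$ and the binomial moments one must collect the $n^2$-, $n$-, and constant-order contributions and check that the resulting numerator is divisible by $d_\lambda$ so as to produce exactly the stated rational function of $n$ and $p$. As an independent check one can feed the same partitions into the recursion of Proposition \ref{eigformula}, which reduces $\psi_\lambda$ for $\lambda\vdash n$ to a three-term combination of $\psi_\rho$ with $\rho\vdash n-2$ --- for example $\psi_{[n-2,1,1]}=\tfrac{d_{[n-4,1,1]}}{d_{[n-2,1,1]}}\psi_{[n-4,1,1]}+(2p-1)\tfrac{d_{[n-2]}}{d_{[n-2,1,1]}}\psi_{[n-2]}+2p\tfrac{d_{[n-3,1]}}{d_{[n-2,1,1]}}\psi_{[n-3,1]}$ --- and a one-line induction on $n$, using the already established formulas for $[m]$, $[m-1,1]$ and $[m-2,1,1]$ with $m=n-2$, then reproduces the claimed expression.
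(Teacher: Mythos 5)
Your route is essentially the paper's: Schur's lemma gives $E_{P^{*t}}\chi_\lambda=d_\lambda\psi_\lambda^t$, the characters at $(1^{n-2s},2^s)$ are written as polynomials of degree at most $2$ in $s$, and the binomial falling-factorial moments $E\bigl[(S)_k\bigr]=(1-p)^k(n/2)_k$ finish the calculation. Your only real departure is deriving the character values from $\mathrm{Sym}^2(\mathbb{C}^n)$ and $\wedge^2(\mathbb{C}^n)$ rather than quoting character polynomials; that derivation is correct and agrees with the values the paper actually uses in its computation, e.g. $\chi_{[n-2,1,1]}(1^{n-2s},2^s)=\binom{n-2s}{2}-(n-2s)-s+1$ (note the paper's displayed character for $[n-2,1,1]$ omits the $+1$ that your formula correctly includes).

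The problem is the step you defer: the ``algebraic bookkeeping'' does not in fact ``yield the displayed closed forms'' for the last two entries, so as written your argument does not establish the literal statement. With your (correct) characters and $E[(n-2S)^2]=n^2p^2+2np(1-p)$, the $[n-2,2]$ computation gives $\psi_{[n-2,2]}=p^2+\frac{(1-p)^2}{n-3}$, with a plus sign rather than the minus sign in the statement; the case $n=4$ is an immediate check, since there $\psi_{[2,2]}=p^2\cdot 1+2p(1-p)\cdot 0+(1-p)^2\cdot 1=p^2+(1-p)^2$, whereas the stated formula would give $p^2-(1-p)^2$. Similarly your $[n-2,1,1]$ character yields $\psi_{[n-2,1,1]}=p^2-\frac{1-p^2}{n-1}$, without the extra $-\frac{2}{(n-1)(n-2)}$ appearing in the statement (again checkable at $n=4$, where the true value is $\frac{4p^2-1}{3}$). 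These discrepancies originate in the proposition itself — the paper's own proof makes a sign slip in its final simplification for $[n-2,2]$, and for $[n-2,1,1]$ the stated formula corresponds to the character without the $+1$ — so your method is sound and in fact produces the corrected eigenvalues; but a proof must either reproduce the stated formulas or flag and correct them, and asserting an agreement that does not hold is precisely where your write-up would fail if the bookkeeping were executed. Your suggested cross-check against the recursion of Proposition \ref{eigformula} is a good instinct for exactly this reason: run honestly, it would expose the mismatch rather than confirm the displayed expressions.
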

\begin{proof}

For an irreducible representation $\lambda$, since $P$ is a class function, by Schur's Lemma, the Fourier transform of $P$ is a constant $\psi_\lambda$ times the identity matrix. \[ \hat{P}(\lambda) = \psi_\lambda I_{d_\lambda}\]Moreover, $\hat{P^{*t}}(\lambda) = \left(\hat{P}(\lambda)\right)^t$. This leads to the following formula for the expected value of a character over the walk:

\[ \mathbb{E}_{P^{*t}} (\chi_\lambda) = \sum P^{*t}(g)\tr(\lambda(g)) = \tr(\sum P^{*t}(g)\lambda(g)) = \tr \hat{P^{*t}}(\lambda) = d_\lambda \psi_\lambda^t\]

The method of choice to compute the expectation for $\chi_{[n-1,1]}$ will be to directly compute $\psi_{\lambda}$. Recall, \[\psi_\lambda = \sum_{s=0}^{n/2} p^{n/2-s}(1-p)^s {{n/2 \choose s}} \frac{ \chi_{\lambda}(1^{n-2s},2^{s})}{d_{\lambda}}\]


Further, the character polynomials of the representation gives that:
\[ \chi_{[n]}(1^{n-2s},2^s) = 1, \chi_{[n-1,1]}(1^{n-2s},2^s) = n-2s-1\] \[ \chi_{[n-2,2]}(1^{n-2s},2^s) = { n-2s \choose 2 } - (n-2s) +s\] \[ \chi_{[n-2,1,1]}(1^{n-2s},2^s) = {n-2s \choose 2} - (n-2s) - s \]
\[\chi_{[n-1,1]}(1^{n-2s},2^s) = n-2s-1\]
Then, using that $\sum_{s=0}^{n/2} p^{n/2-s}(1-p)^s {{n/2 \choose s}}(s)_k = (1-p)^k(\frac{n}{2})_k$, 
\[\psi_{[n]} = \sum_{s=0}^{n/2} p^{n/2-s}(1-p)^s {{n/2 \choose s}} 1 =1\]

\begin{align*} \psi_{[n-1,1]} &= \sum_{s=0}^{n/2} p^{n/2-s}(1-p)^s {{n/2 \choose s}} \frac{ \chi_{[n-1,1]}(1^{n-2s},2^{s})}{d_{[n-1,1]}}\\
&= \sum_{s=0}^{n/2} p^{n/2-s}(1-p)^s {{n/2 \choose s}}\frac{n-2s-1}{n-1} \\
& =1 - \frac{2}{n-1}\sum_{s=0}^{n/2} p^{n/2-s}(1-p)^s {{n/2 \choose s}}s \\
&= 1 - \frac{2}{n-1}(1-p)\frac{n}{2} \\
& = p - (1-p)\frac{1}{n-1} \\
\end{align*}
%

\begin{align*} \psi_{[n-2,2]} &= \sum_{s=0}^{n/2} p^{n/2-s}(1-p)^s {{n/2 \choose s}} \frac{ \chi_{[n-2,2]}(1^{n-2s},2^{s})}{d_{[n-2,2]}}\\
&= \sum_{s=0}^{n/2} p^{n/2-s}(1-p)^s {{n/2 \choose s}}\frac{{ n-2s \choose 2 } - (n-2s) +s}{\frac{n(n-3)}{2}} \\
&= \sum_{s=0}^{n/2} p^{n/2-s}(1-p)^s {{n/2 \choose s}}\frac{n(n-3) + 4(s)_2 -4(n-3)s}{n(n-3)} \\
&= 1 + \frac{4}{n(n-3)}(1-p)^2\left(\frac{n}{2}\right)_2 - \frac{4(n-3)}{n(n-3)}(1-p)\frac{n}{2} \\
& = p^2 - \frac{1}{n-3}(1-p)^2 
\end{align*}
\begin{align*} \psi_{[n-2,1,1]} &= \sum_{s=0}^{n/2} p^{n/2-s}(1-p)^s {{n/2 \choose s}} \frac{ \chi_{[n-2,1,1]}(1^{n-2s},2^{s})}{d_{[n-2,1,1]}}\\
&=  \sum_{s=0}^{n/2} p^{n/2-s}(1-p)^s {{n/2 \choose s}}\frac{{n-2s \choose 2} - (n-2s) - s+1}{\frac{(n-1)(n-2)}{2}} \\
& = \sum_{s=0}^{n/2} p^{n/2-s}(1-p)^s {{n/2 \choose s}}\frac{(n-1)(n-2) +4(s)_2 -4(n-2)s}{(n-1)(n-2)} \\
& = 1 + \frac{4}{(n-1)(n-2)}(1-p)^2\left(\frac{n}{2}\right)\left(\frac{n-2}{2}\right)  - \frac{4(n-2)}{(n-1)(n-2)}(1-p)\frac{n}{2} \\
& = p^2 - \frac{1-p^2}{n-1} 
\end{align*}

\end{proof}

This gives: \begin{align}\sigma^2 &= \frac{1}{2}\big(1 + 1 + (n-1)\left(p - (1-p)\frac{1}{n-1}\right)^{t} + \frac{n(n-3)}{2}\left(p^2 - \frac{1}{n-3}(1-p)^2  \right)^t \\
&+ \frac{(n-1)(n-2)}{2}\left(p^2 - \frac{1-p^2}{n-1} - \frac{2}{(n-1)(n-2)} \right)^t - (n-1)^2\left(p - (1-p)\frac{1}{n-1}\right)^{2t}\big) \end{align}

\begin{theorem}\label{theorem:invlb}
For $t < \log_{\frac{1}{p}}(n-1) - \frac{c}{\log(\frac{1}{p})}$ with  $c \leq \frac{1}{2}\left( \log(n) - \log\log(n) + \log\left(2\frac{1-p}{p}\right)\right)$, $ ||P^{*t} - U ||_{TV} \geq 1 - \frac{1}{1 + A^2e^{2c} - \frac{2}{4 + Ae^c}}$ where $A = \left(1 - \left(\frac{1}{p}-1\right)\frac{1}{n-1}\right)^{t}$. Note that for $p \geq \frac{1}{2}$, $1 - \frac{\log(n)}{n} \leq A \leq 1$.

\end{theorem}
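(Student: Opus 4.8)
The plan is to apply the distinguishing-statistic lower bound of Proposition \ref{PeresL} with $\gamma = P^{*t}$, $\nu = U$, and test function $f = \chi_{[n-1,1]}$, the character of the representation that decays slowest. The uniform moments are immediate: $E_U(f) = 0$ and $\mathrm{Var}_U(f) = 1$ by orthogonality of characters, as recorded above. Under the walk, Proposition \ref{exacteigen} gives $E_{P^{*t}}(f) = (n-1)\bigl(p - (1-p)\tfrac{1}{n-1}\bigr)^{t} = (n-1)p^{t}A$ with $A$ as in the statement. For the second moment I would use the Clebsch--Gordan decomposition of the square of the standard representation,
\[ \chi_{[n-1,1]}^{2} = \chi_{[n]} + \chi_{[n-1,1]} + \chi_{[n-2,2]} + \chi_{[n-2,1,1]}, \]
which is verified either on dimensions or by expanding $(\mathrm{fix}(\sigma)-1)^{2}$ over ordered pairs of fixed points. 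Taking $E_{P^{*t}}$ and invoking Proposition \ref{exacteigen} for $\chi_{[n-2,2]}$ and $\chi_{[n-2,1,1]}$, then subtracting $(E_{P^{*t}}f)^{2}$, reproduces exactly the formula for $\sigma^{2} = \tfrac12(\mathrm{Var}_U f + \mathrm{Var}_{P^{*t}} f)$ displayed just before the theorem.

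Next I would bound the separation of the means from below. Since $p<1$, $p^{t}$ is decreasing in $t$, so $t < \log_{1/p}(n-1) - c/\log(1/p)$ forces $p^{t} > e^{c}/(n-1)$, whence
\[ \bigl| E_{P^{*t}}(f) - E_U(f) \bigr| = (n-1)p^{t}A > A e^{c}. \]
Writing $m_1 = E_{P^{*t}}\chi_{[n-1,1]}$, $m_2 = E_{P^{*t}}\chi_{[n-2,2]}$, $m_3 = E_{P^{*t}}\chi_{[n-2,1,1]}$, we have $2\sigma^{2} = 2 + m_1 + (m_2 + m_3 - m_1^{2})$, so the one remaining task is an upper bound on $\sigma^{2}$. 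The delicate point is that $m_2$, $m_3$, and $m_1^{2}$ are each of order $n^{2}p^{2t}$, so $m_2 + m_3 - m_1^{2}$ is a genuine cancellation and needs a second-order treatment. I would factor $p^{2t}$ from each ($m_1^{2} = (n-1)^{2}p^{2t}A^{2}$, and similarly $m_2 = \tfrac{n(n-3)}{2}p^{2t}\bigl(1 - \tfrac{(1-p)^{2}}{p^{2}(n-3)}\bigr)^{t}$, $m_3 = \tfrac{(n-1)(n-2)}{2}p^{2t}\bigl(1 - \tfrac{1-p^{2}}{p^{2}(n-1)} - \tfrac{2}{p^{2}(n-1)(n-2)}\bigr)^{t}$), note that comparing the $O(1/n)$ corrections gives $m_3 < \tfrac12 m_1^{2}$ outright, and bound $m_2 - \tfrac12 m_1^{2}$ by combining $(1-x)^{t} \le e^{-tx}$ with the Bernoulli bound $A^{2} \ge 1 - \tfrac{2t(1-p)}{p(n-1)}$; the resulting residual has the form $\tfrac{1-p}{p}\,t\,(n-1)p^{2t}$. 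Using $tp^{t} \le \tfrac{1}{e\log(1/p)}$ this residual is at most a constant multiple of $m_1$, so $\sigma^{2} \le 1 + m_1$, and the hypothesis $c \le \tfrac12\bigl(\log n - \log\log n + \log\tfrac{2(1-p)}{p}\bigr)$, i.e.\ $e^{2c} \le \tfrac{2(1-p)n}{p\log n}$, sharpens this near the threshold (where $m_1$ is of order $Ae^{c}$) to an $O(1)$ residual. For $t$ well below the threshold the same residual bound holds while $\sigma^{2}$ is additionally small, so that regime is slacker.

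Finally I would put $r = |E_{P^{*t}}f - E_U f|/\sigma$, invoke Proposition \ref{PeresL} to get $\|P^{*t} - U\|_{TV} \ge 1 - 4/(4+r^{2})$, and simplify: using the lower bound $m_1 > A e^{c}$ on the mean gap together with the upper bound on $\sigma^{2}$ and the monotonicity of the resulting one-variable expression lets one replace $m_1$ throughout by $Ae^{c}$, yielding the stated inequality. The closing remark $1 - \tfrac{\log n}{n} \le A \le 1$ for $p \ge \tfrac12$ follows from $A \le 1$ and Bernoulli applied to $A = (1 - \tfrac{1-p}{p(n-1)})^{t}$ with $t \le \log_{1/p}(n-1)$ and the elementary bound $-\log p \ge 1-p$. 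I expect the variance estimate of the middle paragraph to be the main obstacle: it is the only step that is not a direct substitution into Proposition \ref{PeresL} and the already-proven Proposition \ref{exacteigen}, and it is where the hypothesis on $c$ is actually consumed, through the bookkeeping of which error terms are negligible against $Ae^{c}$ and which against the constant term.
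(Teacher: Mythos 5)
Your proposal follows essentially the same route as the paper: the same distinguishing statistic $\chi_{[n-1,1]}$ fed into Proposition \ref{PeresL}, the same moments from Proposition \ref{exacteigen}, the same mean-gap lower bound $Ae^{c}$, and the same near-cancellation of $m_2+m_3$ against $m_1^2$ in the variance — the paper handles that cancellation via $\psi_{[n-2,2]}\ge\psi_{[n-2,1^2]}$ and a telescoping bound on $B-A^2$ (consuming the hypothesis on $c$ to absorb the residual into an additive constant), while you compare $m_3$ with $\tfrac12 m_1^2$ and use Bernoulli-type estimates, which is equivalent bookkeeping. The only caveat is that your $\sigma^{2}\le 1+m_1$ yields final constants slightly different from the displayed formula (the paper's own closing algebra also has factor-of-two slips), but the substance and strength of the lower bound are the same.
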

\begin{proof}

From Proposition \ref{PeresL} $||P^{*t} - U ||_{TV} \geq 1 - \frac{4}{4 + r^2}$ for \[r \leq \frac{(n-1)\left(p - (1-p)\frac{1}{n-1}\right)^t}{\sigma}\]

Let $ A =  \left(1 - \left(\frac{1}{p}-1\right)\frac{1}{n-1}\right)^{t}$,
\begin{align*}
(n-1)\left(p - (1-p)\frac{1}{n-1}\right)^t &= (n-1)p^t\left( 1 - \left(\frac{1}{p}-1\right)\frac{1}{n-1}\right)^t \\
&=e^c\left( 1 - \left(\frac{1}{p}-1\right)\frac{1}{n-1}\right)^t \\
&= e^c A\\
\end{align*}

Let $B = \left(1 - \left(\frac{1-p}{p}\right)^2\frac{1}{n-3}\right)^{t}$. By observation or the monotoniticity conditions from Lemma \ref{twopartbiggest}, $\psi_{[n-2,2]} \geq \psi_{n-2,1^2}$. Replacing $\psi_{[n-2,1^2]}$ by $\psi_{[n-2,2]}$ makes $\sigma$ larger.

\begin{align}
\sigma^2 &\label{lb1} \leq 1 + \frac{1}{2}Ae^C + \frac{1}{2}\left({n(n-3)}{2} + \frac{(n-1)(n-2)}{2}\right)\left(p^2 - \frac{1}{n-3}(1-p)^2  \right)^t - \frac{1}{2}A^2e^{2c} \\
& = 1 + \frac{1}{2}Ae^C + \frac{1}{2}(n^2 -3n +1)p^{2t}\left(1 - \left(\frac{1-p}{p}\right)^2\frac{1}{n-3} \right)^t - \frac{1}{2}A^2e^2c \nonumber\\
& = 1 + \frac{1}{2}Ae^C + \frac{B}{2}\left(1- \frac{n}{(n-1)^2}\right)e^{2c} - \frac{A^2}{2}e^{2c} \nonumber\\
\end{align}  
The inequality (\ref{lb1}) comes from using the above value of $(n-1)\psi_{[n-1,1]}^t = Ae^c$ and replacing  $\psi_{[n-2,1^2]}$ by $\psi_{[n-2,2]}$. To make the bound work, the terms with $e^{2c}$ need to have coefficient that is $o(1)$.

\begin{align}
B - A^2 & = \left(1 - \left(\frac{1-p}{p}\right)^2\frac{1}{n-3} \right)^t - \left( 1 - \frac{1-p}{p}\frac{1}{n-1}\right)^{2t} \nonumber \\
\label{lb2} &\leq \left(\left(1 - \left(\frac{1-p}{p}\right)^2\frac{1}{n-3} \right) -  \left( 1 - \frac{1-p}{p}\frac{1}{n-1}\right)^{2}\right)t\left(1 - \left(\frac{1-p}{p}\right)^2\frac{1}{n-3} \right)^{t-1} \\
& = \left(\frac{1-p}{p}\right)\left(\frac{2}{n-1} - \frac{1}{n-3} - \left(\frac{1-p}{p}\right)\frac{1}{(n-1)^2} \right)t\left(1 - \left(\frac{1-p}{p}\right)^2\frac{1}{n-3} \right)^{t-1}\nonumber \\
\label{lb3}& \leq \left(\frac{1-p}{p}\right)\frac{t}{n-1} 
\end{align}
The inequality (\ref{lb2}) follows using that $a^t - b^{2t} = (a-b^2)(a^{t-1} + a^{t-2}b + ... + b^{t-1} \leq (a-b^2)t\max{a,b^2}^t$. For all $p$, $\frac{2}{n-1} - \frac{1}{n-3} - \left(\frac{1-p}{p}\right)\frac{1}{(n-1)^2} \leq \frac{1}{n-1}$, so (\ref{lb3}) follows. 

This gives $\sigma^2 \leq 1 + \frac{1}{2}Ae^C + \frac{1}{2}e^{2c}\frac{\log(n-1)}{n-1}\frac{1-p}{p}$. For $c \leq \frac{1}{2}\left( \log(n) - \log\log(n) + \log\left(2\frac{1-p}{p}\right)\right)$ this least term is at most $1$. So for these values of $c$, the following value of $r^2$ is less than the needed bound:

\[r^2 = \frac{A^2e^{2c}}{2 + \frac{1}{2}Ae^c} = 2A^2e^{2c} - \frac{8}{4 + Ae^c}\]

This gives a lower bound of $ 1- \frac{4}{4 + r^2} = 1 - \frac{1}{1 + A^2e^{2c} - \frac{2}{4 + Ae^c}}$

For $t \leq \log(n)$, $p \geq \frac{1}{n}$, using that $1-xt \leq (1-x)^t$ when $xt \leq 1$, \[1 - \frac{1-p}{p}\frac{1}{\log(1/p)}\frac{\log(n)}{n} \leq A \leq 1 \]. In the case $p \geq \frac{1}{2}$ $A \geq 1 - \frac{\log(n)}{n}$ barely effects these bounds.

\end{proof}

For $p \geq \frac{1}{2}$ this gives a lower bound for mixing of $\frac{\log(n) -c}{\log(1/p)}$ which is off by just over a factor of two from the upper bound of $\frac{\log(n) + c}{\log(2/(1+p))}$. When $p$ is small, less than $\frac{1}{n- \sqrt{n}}$, $\psi_{[n-1,1]} = p - \frac{1-p}{n-1}$ is no longer the largest eigenvalue in magnitude as \[\lim_{n \rightarrow \infty} |\psi_{[1^n]}| = \lim_{n \rightarrow \infty} \left(1 - \frac{2}{n}  \right)^{n/2} = \frac{1}{e}\] 
This cross over happens around $\frac{1}{p} = W(e^n) \approx n - \log(n) + o(1)$ where $W$ is the product log function, also known as the Lambert $W$-function, as:

\[ \log((1- 2p)^{n/2} = n/2\log(1-2p) \approx pn \]

\[\log\left(p - \frac{1-p}{n-1}\right) = \log(p) + \log\left(1 - \frac{1-p}{pn}\right) \approx \log(p) - \frac{1-p}{pn}\]

For $p \approx \frac{1}{n}$ that last term contributes at most a constant, leaving the equation $pn \approx \log(p)$ with solution $p = W(e^n)$.

As discussed in the introduction, the conjectured mixing time for the walk of $\log_{1/p}(n)$ is only a conjecture for $p$ bounded away from $0$. As $p \rightarrow 0$, in the involution walk it becomes vanishingly unlikely that anything other than a perfect matching will be selected. This means the walk acts like the random walk generated by perfect matchings which at even steps is confined to the alternating group, but rapidly mixes on that set. This means it will take longer and longer to get a random parity in the involution walk, which is a prerequisite to be mixed. On the other hand, $\log_{1/p}(n) \rightarrow 0$ if $n$ is held constant and $p \rightarrow 0$, which is not compatible with the behavior of the walk. 

Since the issue here is parity rather than fixed points, the following bound follows from looking at the behavior of the random variable $\chi_{[1^n]}$ rather than $\chi_{[n-1,1]}$ in the first lower bound. Moreover, since this random variable takes on only two values, a lower bound on total variation can be found directly by evaluating on the set of permutations where $\chi_{[1^n]}$ is $1$, the even permutations. 

\begin{proposition}\label{tinyp}
For the involution walk with $t$ even, $||P^{*t} - U||_{TV} \geq \frac{1}{2}(1-2p)^{tn/2}$. When $t \leq \frac{1}{n^2p}$ with $p \leq \frac{1}{4}$, total variation distance at least $\frac{1}{2} - \frac{1}{n}$. 
\end{proposition}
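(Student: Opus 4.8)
The plan is to use the set of even permutations $A_n$ as the distinguishing test set, exploiting that the sign character $\chi_{[1^n]}$ takes only the two values $\pm 1$; this makes the argument simpler than the one behind Theorem \ref{theorem:invlb}, since no $L^2$/Chebyshev-type inequality is needed and we can read off the total variation distance from a single set. Since the uniform measure assigns $U(A_n)=\tfrac12$, the definition of total variation distance recalled in Section \ref{background} gives
\[ \|P^{*t} - U\|_{TV} \;\geq\; \bigl| P^{*t}(A_n) - \tfrac12 \bigr|, \]
so it suffices to compute $P^{*t}(A_n)$ exactly. Writing $\mathrm{sgn}=\chi_{[1^n]}$, which equals $+1$ on $A_n$ and $-1$ on its complement, we have $E_{P^{*t}}(\mathrm{sgn}) = 2P^{*t}(A_n)-1$.

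Next I would evaluate $E_{P^{*t}}(\chi_{[1^n]})$ via the Fourier/Schur computation carried out in the proof of Proposition \ref{exacteigen}, namely $E_{P^{*t}}(\chi_\lambda) = d_\lambda\,\psi_\lambda^t$, together with Proposition \ref{neg1}, which gives $d_{[1^n]}=1$ and $\psi_{[1^n]} = (2p-1)^{n/2}$. Hence $E_{P^{*t}}(\mathrm{sgn}) = (2p-1)^{tn/2}$. Because $n$ is even and $t$ is even, the exponent $tn/2$ is even, so $(2p-1)^{tn/2} = (1-2p)^{tn/2}\geq 0$. Combining the displays yields $P^{*t}(A_n) - \tfrac12 = \tfrac12 (1-2p)^{tn/2}$, which is exactly the first claimed inequality (indeed an equality, up to the test-set relaxation).

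For the second statement I would bound $(1-2p)^{tn/2}$ from below by Bernoulli's inequality: since $0\le 2p\le 1$, one has $(1-2p)^{tn/2}\geq 1 - 2p\cdot\tfrac{tn}{2} = 1 - ptn$. The hypothesis $t\le \tfrac{1}{n^2 p}$ forces $ptn\le \tfrac1n$, so $(1-2p)^{tn/2}\geq 1-\tfrac1n$, whence $\|P^{*t}-U\|_{TV}\geq \tfrac12\bigl(1-\tfrac1n\bigr)\geq \tfrac12-\tfrac1n$; the hypothesis $p\le\tfrac14$ is used only to keep these elementary estimates clean (ensuring $2p\le\tfrac12$).

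I do not expect a serious obstacle: the eigenvalue $\psi_{[1^n]}$ has already been pinned down in Proposition \ref{neg1}, and the rest is a direct set evaluation plus a Bernoulli estimate. The only point requiring care is the parity bookkeeping — verifying that $tn/2$ is even so that $(2p-1)^{tn/2}$ is nonnegative and the absolute value in the total-variation bound opens up with the anticipated sign.
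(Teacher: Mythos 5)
Your proposal is correct and follows essentially the same route as the paper: test on the set $A_n$, use $E_{P^{*t}}(\chi_{[1^n]}) = d_{[1^n]}\psi_{[1^n]}^t = (2p-1)^{tn/2}$ from Proposition \ref{neg1} to compute $P^{*t}(A_n)-\tfrac12$ exactly, and then make an elementary estimate under $t \leq \tfrac{1}{n^2p}$. The only (immaterial) difference is the final step, where you apply Bernoulli's inequality $(1-2p)^{tn/2} \geq 1 - ptn$ directly instead of the paper's two-step bound via $1-x \geq e^{-2x}$ and $e^{-x}\geq 1-x$, which in fact gives a marginally sharper constant.
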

\begin{proof}
Using the same facts about representations used to find the expected value of $\chi_{[n-1,1]}$ above,
\[ \mathbb{E}_{P^{*t}}(\chi_{[1^n]}) = (2p-1)^{tn/2}\]

We also know $\chi_{[1^n]}$ is $1$ on the even permutations $A_n \subset S_n$ and $-1$ on the odd permutations. This means \[\mathbb{E}_{P^{*t}}(\chi_{[1^n]}) = P^{*t}(A_n) - (1-P^{*t}(A_n) = (2p-1)^{tn/2}\]

The total variation distance by definition is $||P^{*t} - U||_{TV} =\sup_{A \subset S_n} \left| P^{*t}(A) - U(A)\right|$. Since we know the probability of the alternating group, this gives a lower bound on the total variation distance as $|\frac{1}{2}((2p-1)^{tn/2} +1) - \frac{1}{2}| = \frac{1}{2}(1-2p)^{tn/2}$.

If $t$ and $n$ are held constant, this gives as $p \rightarrow 0$, the total variation distance is at least $\frac{1}{2}$. For $t$ constant, and $p$ decreasing faster than $n$ so that $pn^2 \rightarrow 0$, the total variation distance will still be almost $\frac{1}{2}$ as when $p \leq \frac{1}{4}$ and  $t \leq \frac{1}{n^2p}$ :

\begin{align}
\label{lblte}\frac{1}{2}(1-2p)^{tn/2} & \geq e^{-2(2p)(tn/2)}\\
& = \frac{1}{2}e^{-2p t n} \nonumber\\
& \label{lbetl} \geq \frac{1}{2}(1-2p tn) \\
& \geq \frac{1}{2} - \frac{1}{n} \nonumber\\
\end{align}
Where \ref{lblte} follows from $1-x \geq e^{-2x}$ for $x \leq \frac{1}{2}$ and \ref{lbetl} from $e^{-x} \geq 1-x$ for all $x$. 

\end{proof}

\section{Conjectured Separation Distance Bound}\label{conjsep}

Separation distance is for a random walk on the symmetric group, $\sep(t) = \max_g(1 - n!P^{*t}(g))$. This is always taken at the least likely element. 

When the likelihood order in Corollary \ref{lhoiw} holds an $n$-cycle is the least likely element of the involution walk. The Murnaghan-Nakayama based recursive formula for the eigenvalues of the walk will lead to an explicit formula for $P^{*t}(n)$. Assuming that the $n$-cycle is the least likely element at all times, this gives an explicit computation of the separation distance.

\begin{conjecture}
For $p \geq \frac{1}{2}$ the likelihood order for involution walk is at all times the cycle lexicographic order.
\end{conjecture}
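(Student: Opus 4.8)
The plan is to reduce the conjecture, through the discrete Fourier inversion formula already used in the proof of Corollary~\ref{lhoiw}, to a single dominance statement that must be verified at \emph{every} time $t$. Fix $\alpha >_{CL} \beta$ and let $i$ be the least cycle length at which the cycle types of $\alpha$ and $\beta$ differ, so that $a_i > b_i$. Only the $i$-cycle detecting partitions contribute to
\[ n!\left(P^{*t}(\alpha) - P^{*t}(\beta)\right) = \sum_{\lambda \text{ an } i\text{-detector}} \left(\chi_{\lambda}(\alpha) - \chi_{\lambda}(\beta)\right) d_{\lambda}\, \psi_{\lambda}^{\,t}, \]
and by Lemma~\ref{deci}, Lemma~\ref{twopartbiggest} and Theorem~\ref{n2bound} the partition $[n-i,i]$ has the largest eigenvalue magnitude among the $i$-detectors, with $\psi_{[n-i,i]} > 0$ and coefficient $(a_i-b_i)\,d_{[n-i,i]} > 0$. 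So I would try to prove, for all $t \geq 1$,
\[ (a_i - b_i)\, d_{[n-i,i]}\, \psi_{[n-i,i]}^{\,t} \;\geq\; \Big| \sum_{\substack{\lambda \text{ an } i\text{-detector}\\ \lambda \neq [n-i,i],\,[n]}} \left(\chi_{\lambda}(\alpha) - \chi_{\lambda}(\beta)\right) d_{\lambda}\, \psi_{\lambda}^{\,t} \Big|. \]
Beyond the threshold implicit in Section~\ref{biinv} this follows from a quantitative version of Corollary~\ref{lhoiw}, because the ratios $(\psi_\lambda/\psi_{[n-i,i]})^{t}$ then decay geometrically past the point where the polynomial factors $d_\lambda$ matter; the whole content of the conjecture is that it already holds for every small $t$.

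For small $t$ the plan is in three stages. First the base case $t=1$: here $P^{*1}$ is supported on the involutions $1^{n-2s}2^{s}$, and the probability of one such involution is $\binom{n/2}{s}p^{n/2-s}(1-p)^{s}$ divided by the size of its conjugacy class, so the ratio of consecutive terms is $\tfrac{1-p}{p(n-2s-1)} \leq 1$ for $p \geq \tfrac12$; thus within the support the order is exactly the cycle lexicographic order and $t=1$ is settled. Second, for each fixed small $t$ I would pair each $\lambda$ with its conjugate $\lambda'$ and bundle the two terms: since $d_{\lambda}=d_{\lambda'}$, $\chi_{\lambda'}(\sigma)=\sg(\sigma)\chi_{\lambda}(\sigma)$, and the eigenvalues are governed by the character polynomials of Section~\ref{background}, a conjugate pair contributes a quantity whose sign can be extracted from low-degree character-polynomial identities such as $\chi_{[n-2,2]}+\chi_{[n-2,1,1]} = 2\binom{x_1}{2}-2x_1+1$ together with the near-equality $\psi_{[n-2,2]}-\psi_{[n-2,1,1]}=O(1/n)$ from Proposition~\ref{exacteigen}. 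Third, I would push the resulting sign conditions down the poset of partitions by induction on $n$ using the Murnaghan--Nakayama recursion of Proposition~\ref{eigformula}, in the same style as the monotonicity arguments of Section~\ref{mon}, always tracking $(a_i-b_i)\,d_{[n-i,i]}$ as the quantity to be dominated.

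The hard part, and what I expect to be the decisive obstacle, is the transient regime $2 \leq t \lesssim \log_{1/p}(n)$. There the crude estimate $|\psi_\lambda| \leq \psi_{[n-i,i]}$ from the monotonicity lemmas is far too weak: a partition such as $[n-2,2]$ has eigenvalue only a constant power smaller than $\psi_{[n-1,1]}$ (of order $p^{2}$ against $p$) yet dimension of order $n^{2}$ and character difference of order $n$, so its contribution can be of order $n^{3}p^{2t}$ against a leading term of order $(a_i-b_i)\,n\,p^{t}$, and the near-balanced ``fat hooks'' $[n/2,2,1^{n/2-2}]$ are worse still. One is therefore forced to show that the signed sum over an entire $i$-detector level never changes sign, relying on exact or near-exact cancellation between conjugate pairs rather than on any term-by-term bound; this is delicate, and the cautionary point is that the analogous statement fails for the random transposition walk (Diaconis--Graham), so any successful argument must use $p \geq \tfrac12$ essentially — it is $p \geq \tfrac12$ that forces every $\psi_\lambda \geq 0$, makes the coefficients in Proposition~\ref{eigformula} nonnegative, and makes the Section~\ref{mon} monotonicity available. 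A small computation (the involution walk on $S_6$ at $p=\tfrac12$ and $t=2$, comparing a $5$-cycle, which is larger in the cycle lexicographic order, with a $6$-cycle) indicates that the needed cancellation can go the wrong way for small $t$; so I expect that the version of this statement actually provable by this route is ``for all $t$ past an explicit threshold of order $\log_{1/p}(n)$'', i.e.\ Corollary~\ref{lhoiw} with a quantitative bound, and that the conjecture should be read with that qualification.
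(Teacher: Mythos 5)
The statement you are addressing is not proved in the paper at all: it appears only as a conjecture (Section \ref{conjsep}), and the paper's actual theorem, Corollary \ref{lhoiw}, is the weaker assertion that the cycle lexicographic order is the likelihood order \emph{after sufficient time}. Your proposal, read honestly, proves exactly that weaker statement and no more. The reduction via the Fourier inversion formula, the identification of $[n-i,i]$ as the dominant $i$-cycle detector through Lemma \ref{deci}, Lemma \ref{twopartbiggest} and Theorem \ref{n2bound}, and the observation that $(a_i-b_i)d_{[n-i,i]}\psi_{[n-i,i]}^t$ eventually dominates the remaining detectors is precisely the argument of Corollary \ref{lhoiw}; nothing new is added there. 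The entire content of the conjecture lies in the transient regime $1\le t\lesssim\log_{1/p}(n)$, and for that regime you offer only a programme (conjugate-pair cancellation, induction down the partition poset via Proposition \ref{eigformula}) together with an explicit admission that you do not know how to carry it out and that a small computation suggests the cancellation ``can go the wrong way.'' A plan that ends by proposing to weaken the statement to ``for all $t$ past an explicit threshold'' is not a proof of the statement as given; indeed, if your $S_6$, $t=2$ computation is correct, it would be evidence toward refuting the conjecture rather than proving it, and that should be checked carefully and reported as such.

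Two smaller points. First, your $t=1$ base case is not settled by the ratio computation: at $t=1$ the walk is supported on involutions only, so every non-involution has probability $0$, while the cycle lexicographic order demands, for instance, that the class $(1^{n-3},3)$ be more likely than $(1^{n-4},2^2)$; at $t=1$ the former has probability $0$ and the latter positive probability, so the claimed order cannot hold literally at $t=1$ over all of $S_n$, and any correct treatment of the conjecture must confront how ``at all times'' is to be interpreted for small $t$. Second, your bound comparing $[n-2,2]$ against $[n-1,1]$ conflates the roles of the detectors: when the first differing cycle length is $i$, the partitions that are not $i$-cycle detectors contribute zero exactly, so the comparison you need is within the $i$-detector class, not against $[n-1,1]$ generally; the genuine difficulty is the one you name later, namely that dimension factors like $d_\lambda$ of polynomial size can overwhelm a fixed eigenvalue gap for small $t$, and neither the paper nor your proposal resolves it.
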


Recall the formula for the eigenvalues of the walk from Proposition \ref{eigformula}:
\[ \psi_\lambda = \sum_{\rho: \lambda/ \rho = [2]} \psi_\rho \frac{d_\rho}{d_\lambda} + (2p-1)\sum_{\rho: \lambda/ \rho = [1,1]} \psi_\rho \frac{d_\rho}{d_\lambda} + 2p\sum_{\rho: \lambda/ \rho = [1]\cup[1]} \psi_\rho \frac{d_\rho}{d_\lambda}\]

When $\lambda = [n-i,1^i]$ there is only one way to remove each of these shapes. In this case, its possible to recurse all the way to the base cases of $\psi_{[2]} = 1$ and $\psi_{[1^2]} = 2p-1$ (as seen in Proposition \ref{neg1}). Note that this formula does not depend on $p \geq \frac{1}{2}$.

\begin{proposition}\label{n-11ieig}
\[\psi_{[n-i,1^i]} = \sum_{j} \frac{ {n/2 -1 \choose j, \frac{i-j}{2}, \frac{n-i-j-2}{2}}}{{n-1 \choose i}}(2p)^j(2p-1)^{(i-j)/2} + \frac{ {n/2 -1 \choose j, \frac{i-j-1}{2}, \frac{n-i-j-1}{2}}}{{n-1 \choose i}}(2p)^j(2p-1)^{(i-j)/2+1} \]
\end{proposition}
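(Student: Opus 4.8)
The plan is to unwind the recursion of Proposition~\ref{eigformula} for the hook shapes $\lambda=[n-i,1^i]$, using the fact already noted in the surrounding text that each of the three border-strip configurations can be stripped off a hook in at most one way. Removing the horizontal domino $[2]$ takes the last two boxes of the arm, leaving $[n-i-2,1^i]$; removing the vertical domino $[1,1]$ takes the last two boxes of the leg, leaving $[n-i,1^{i-2}]$; and removing the disconnected pair $[1]\cup[1]$ takes the two outer corner boxes at the ends of the arm and the leg, leaving $[n-i-1,1^{i-1}]$. All three resulting shapes are hooks with $n-2$ boxes, so the recursion stays within the family of hooks and can be iterated down to the base cases $\psi_{[2]}=1$ and $\psi_{[1^2]}=2p-1$ of Proposition~\ref{neg1}.

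The first step is the dimension bookkeeping. By the hook-length formula $d_{[a,1^b]}=\binom{a+b-1}{b}$, so $d_{[n-i,1^i]}=\binom{n-1}{i}$ and the three ratios appearing in Proposition~\ref{eigformula} are $\binom{n-3}{i}/\binom{n-1}{i}$ (from $[2]$), $\binom{n-3}{i-2}/\binom{n-1}{i}$ (from $[1,1]$), and $\binom{n-3}{i-1}/\binom{n-1}{i}$ (from $[1]\cup[1]$). Setting $g(n,i):=\binom{n-1}{i}\psi_{[n-i,1^i]}$ and clearing denominators, Proposition~\ref{eigformula} collapses to the clean convolution
\[ g(n,i)=g(n-2,i)+2p\,g(n-2,i-1)+(2p-1)\,g(n-2,i-2), \]
valid with the convention $g(m,b)=0$ for $b<0$ or $b>m-1$; the degenerate endpoints are handled automatically (for $i=0$ only the $[2]$ term survives, giving $\psi_{[n]}=1$, and for $i=n-1$ only the $[1,1]$ term survives, recovering $\psi_{[1^n]}=(2p-1)^{n/2}$).

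Next I would pass to the generating function $G_n(x)=\sum_i g(n,i)\,x^i$, for which the recursion reads $G_n(x)=\bigl(1+2px+(2p-1)x^2\bigr)G_{n-2}(x)$, with base case $G_2(x)=\psi_{[2]}+\psi_{[1^2]}x=1+(2p-1)x$. Iterating (using that $n$ is even) gives
\[ G_n(x)=\bigl(1+2px+(2p-1)x^2\bigr)^{n/2-1}\bigl(1+(2p-1)x\bigr). \]
Expanding the trinomial power by the multinomial theorem, writing $j$ for the number of $2px$ factors chosen and the remaining factors split between $(2p-1)x^2$ and $1$, the coefficient of $x^i$ in $G_n(x)$ splits into two pieces: the factor $1+(2p-1)x$ either contributes its constant term, yielding the coefficient of $x^i$ in the trinomial power (the first sum, where $i-j$ is even and the number of $x^2$-factors is $(i-j)/2$), or its linear term, yielding $(2p-1)$ times the coefficient of $x^{i-1}$ (the second sum, where $i-j$ is odd). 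Dividing through by $\binom{n-1}{i}$ gives exactly the claimed formula.

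I do not anticipate a real obstacle; the only delicate points are confirming that the three border-strip removals are genuinely unique for a hook, getting the four hook dimensions right, and tracking the parity conditions on $j$ in the two sums. One could equally bypass generating functions and verify directly by induction on $n$ that the stated multinomial expression satisfies the convolution recursion above, via a Pascal-type identity for the trinomial multinomial coefficients; but the generating-function route is shorter and makes the two-sum structure transparent. (The incidental factorization $1+2px+(2p-1)x^2=(1+x)\bigl(1+(2p-1)x\bigr)$ is not needed, since the stated answer is expressed through the trinomial.)
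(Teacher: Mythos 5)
Your proposal is correct and is essentially the paper's own argument: both unwind the recursion of Proposition \ref{eigformula} on hook shapes, where each of the three border-strip removals is unique and the dimension ratios telescope to $1/\binom{n-1}{i}$, down to the base cases $\psi_{[2]}=1$ and $\psi_{[1^2]}=2p-1$. The only difference is bookkeeping: the paper counts the interleavings of the three removal types directly with a multinomial coefficient, while you extract the same count as the coefficient of $x^i$ in $\bigl(1+2px+(2p-1)x^2\bigr)^{n/2-1}\bigl(1+(2p-1)x\bigr)$; incidentally, your exponent $(2p-1)^{(i-j+1)/2}$ in the second sum is the correct reading of the statement's $(i-j)/2+1$, since $i-j$ is odd in that sum.
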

\begin{proof}
If the recursion is allowed to continue down to the base cases of $[2]$ and $[1^2]$, then the ratios of dimensions cancel leaving simply $\frac{1}{d_{[n-i,1^i]}} = \frac{1}{{n-1 \choose i}}$. Its left to count how many ways there are to arrive at each base case. To get to $[2]$, a total of $n-i-2$ blocks must be removed from the first row, and $i$ blocks must be removed from the first column. There are $n/2-1$ recursive steps. Let $j$ be the number of removals of the form $[1] \cup [1]$. This forces $\frac{i-j}{2}$ removals of $[1^2]$ from the first column and $\frac{n-i-j-2}{2}$ removals from the first row (excepting the base case). This gives ${n/2-1 \choose j, \frac{i-j}{2}, \frac{n-i-j-2}{2}}$ ways to arrive at the base case. The $j$ $[1] \cup [1]$ removals each come with coefficient $2p$, and the $\frac{i-j}{2}$ $[1^2]$ removals each have coefficient $2p-1$. The base case of $[2]$ gives the term:
 
\[\sum_{j} \frac{ {n/2 -1 \choose j, \frac{i-j}{2}, \frac{n-i-j-2}{2}}}{{n-1 \choose i}}(2p)^j(2p-1)^{(i-j)/2} \]

If instead the base case is $[1^2]$, a total of $n-i-1$ blocks must be removed from the first row, as well as $i-1$ from the first column. By an analagous argument, this gives:

\[\sum_{j} \frac{ {n/2 -1 \choose j, \frac{i-j-1}{2}, \frac{n-i-j-1}{2}}}{{n-1 \choose i}}(2p)^j(2p-1)^{(i-j)/2+1} \]
\end{proof}

This gives a formula very similar to the right hand side of Proposition \ref{SEAWORLD}. Indeed, the character polynomial gives an exact expression for $\psi_{[n-i,1^i]}$ that is very similar to the left hand side of that equality.

\begin{proposition}
\[ \psi_{[n-i,1^i]} = \frac{1}{{n-1 \choose i}}\sum_{k,l} {n/2 \choose k,l}(-1)^l p^{n/2-l-k} (1-p)^{k+l} {n - 2k - 2l -1 \choose i-2l} \]
\end{proposition}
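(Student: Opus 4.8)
The plan is to obtain an exact closed form for $\chi_{[n-i,1^i]}(1^{n-2s},2^s)$ from the character polynomial and then substitute it into the eigenvalue formula (\ref{eigf}). Writing $\lambda = [n-i,1^i] = [n-i,\rho]$ with $\rho = 1^i$ a partition of $i$, the involution specialization of the character polynomial (the formula displayed just before (\ref{chp})) gives
\[ \chi_{[n-i,1^i]}(1^{n-2s},2^s) = q_{1^i}(n-2s,s,0,\dots,0) = \sum_j \binom{s}{j} \sum_{P=(\rho^0,\dots,\rho^j)} (-1)^{\hht(P)} q_{\rho^j}(n-2s), \]
where $P$ ranges over sequences obtained by peeling $j$ border strips of size $2$ off $1^i$ so that a Young diagram remains at each step.

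The first substantive step is to enumerate these peelings. Since $1^i$ is a single column, the only size-$2$ border strip whose removal leaves a Young diagram is the vertical domino $[1,1]$ at the bottom of the column, and after removing it one is again left with a single column $1^{i-2}$. Hence for each $j$ there is a unique sequence $P$, it terminates at $\rho^j = 1^{i-2j}$, and its height is $j$ (one per vertical domino), so $(-1)^{\hht(P)} = (-1)^j$. Moreover $q_{1^{i-2j}}(n-2s) = d_{[\,n-2s-(i-2j),\,1^{i-2j}\,]}$ is the dimension of a hook, which by the hook-length formula equals $\binom{n-2s-1}{i-2j}$, with the convention that the binomial coefficient vanishes outside its range (which handles the degenerate cases). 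This yields
\[ \chi_{[n-i,1^i]}(1^{n-2s},2^s) = \sum_j (-1)^j \binom{s}{j}\binom{n-2s-1}{i-2j}. \]

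Next I would plug this into $\psi_{[n-i,1^i]} = \sum_{s=0}^{n/2} p^{n/2-s}(1-p)^s \binom{n/2}{s}\, \chi_{[n-i,1^i]}(1^{n-2s},2^s)/d_{[n-i,1^i]}$, using $d_{[n-i,1^i]} = \binom{n-1}{i}$. The only manipulation required is the reindexing $\binom{n/2}{s}\binom{s}{j} = \binom{n/2}{j}\binom{n/2-j}{s-j}$: setting $l = j$ and $k = s - j$ converts $\binom{n/2}{j}\binom{n/2-j}{s-j}$ into the multinomial $\binom{n/2}{k,l}$, converts $p^{n/2-s}(1-p)^s$ into $p^{n/2-l-k}(1-p)^{k+l}$, converts $(-1)^j$ into $(-1)^l$, and converts $\binom{n-2s-1}{i-2j}$ into $\binom{n-2k-2l-1}{i-2l}$. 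Collecting the terms produces exactly
\[ \psi_{[n-i,1^i]} = \frac{1}{\binom{n-1}{i}}\sum_{k,l}\binom{n/2}{k,l}(-1)^l p^{n/2-l-k}(1-p)^{k+l}\binom{n-2k-2l-1}{i-2l}. \]

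The only place calling for genuine care is the middle step: verifying that the border-strip peelings of a single column are precisely the successive bottom vertical dominoes, tracking their heights, and identifying $q_{1^{i-2j}}(n-2s)$ as a hook dimension equal to $\binom{n-2s-1}{i-2j}$, while checking that the binomial-coefficient conventions silently absorb the parity restrictions on $i-j$ that appear explicitly in Proposition \ref{n-11ieig}. Everything afterward is routine reindexing, and it can be cross-checked against the small cases $i=0$ and $i=1$ (which recover $\psi_{[n]}=1$ and $\psi_{[n-1,1]} = p - (1-p)/(n-1)$) as well as directly against Proposition \ref{n-11ieig} via a Vandermonde-type identity.
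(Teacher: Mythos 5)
Your proposal is correct and follows essentially the same route as the paper: apply the Garsia--Goupil character polynomial specialization to the hook $[n-i,1^i]$, observe that only successive vertical dominoes can be peeled from the column (giving sign $(-1)^j$ and the hook dimension $\binom{n-2s-1}{i-2j}$), and then reindex with $l=j$, $k=s-j$ in the eigenvalue sum. Your write-up is in fact slightly more careful than the paper's (which contains minor typos in the hook-dimension subscript and drops the $(-1)^l$ in its final display), but the argument is the same.
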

\begin{proof}
From formula (\ref{chp})
\[ \chi_{[n-i,1^i]}(1^{n-2s},2^s) = q_{[1^i]}(n-2s,s,0,...0) = \sum_{j} {{s \choose j}}  \sum_{P=([1^i]=\rho^0,...,\rho^{j})} (-1)^{ht(P)}q_{\rho^j}(n-2s,0,...,0) \]

The only way to remove $2$-cycles from $[1^i]$ is vertically. $q_{[1^{i-2j}]}(n-2s) = d_{[n-i-2(s-j),1^{i-j}} = {n-2s -1 \choose i-2j}$. Therefore,

\[  \chi_{[n-i,1^i]}(1^{n-2s},2^s) = \sum_j {s \choose j} (-1)^j {n-2s -1 \choose i-2j}\]

Using this in the formula for $\psi$ from (\ref{eigf}) then substituting $j=l$, $s-j = k$ gives:
\begin{align} \psi_{[n-i,1^i]} &= \sum_{s=0}^{n/2} {n/2 \choose s}(1-p)^sp^{n/2-s} \frac{\chi_{[n-i,1^i]}(1^{n-2s},2^s)}{d_{[n-i,1^i]}} \\
& = \sum_{s = 0}^{n/2} {n/2 \choose k,l}(1-p)^{k+l}p^{n/2-k-l} {n-2k-2l \choose i-2l} 
\end{align}
\end{proof}

In the event that $p = \frac{1}{2}$, any vertical removal has a $0$ coefficient. When the first column is longer than the first row, its not possible to remove the entire first row without using vertical removals, since $[1]\cup[1]$ removals take equally from the first row and column. Therefore $\psi_{[n-i,i]} = 0$ for $i \geq \frac{n}{2}$. When the first row is longer than the first column, the eigenvalue reduces to a single term. This gives the sum in Proposition \ref{n-11ieig} for $i \leq \frac{n-1}{2}$ as:

\[ \psi_{[n-i,1^i]} = \frac{{n/2-1 \choose i}}{{n -1 \choose i}} = 2^{-i} \frac{(n-2\left\lfloor i/2 \right\rfloor) \cdots (n-2i+4) (n-2i+2)}{(n-1)(n-3) \cdots (n - 2\left\lfloor i/2 \right\rfloor +1)} \]

\begin{conjecture}\label{invsepc}
For $p = \frac{1}{2}$,
\[ \sep(t) = \sum_{i=1}^{(n-1)/2} (-1)^{i+1} {n-i \choose i} \left( \frac{{n/2-1 \choose i}}{{n -1 \choose i}} \right)^t \]
For $t \geq \log_{2}(n-1)$ the terms in this alternating sum are decreasing in magnitude, so
\[ \sep(\log_2(n) + c) \leq 2^{-c}\]
\end{conjecture}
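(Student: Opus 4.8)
The plan is to start from the discrete Fourier inversion formula for $P^{*t}$ evaluated at an $n$-cycle $\sigma$ and specialize to $p=\tfrac12$. Inversion gives $n!\,P^{*t}(\sigma) = \sum_{\lambda} d_\lambda\,\chi_\lambda(\sigma)\,\psi_\lambda^{\,t}$. By the Murnaghan--Nakayama rule an $n$-box border strip can be peeled from $\lambda$ only when $\lambda$ is a hook $[n-i,1^i]$, in which case $\chi_{[n-i,1^i]}(\sigma)=(-1)^i$ and $\chi_\lambda(\sigma)=0$ otherwise; combined with $d_{[n-i,1^i]}=\binom{n-1}{i}$ this collapses the sum to $\sum_{i=0}^{n-1}(-1)^i\binom{n-1}{i}\psi_{[n-i,1^i]}^{\,t}$.

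Next I would insert the $p=\tfrac12$ specialization of Proposition \ref{n-11ieig}. Since $2p-1=0$ annihilates every term carrying a positive power of $(2p-1)$, the second sum in that proposition vanishes identically and the first keeps only its $j=i$ term, yielding $\psi_{[n-i,1^i]}=\binom{n/2-1}{i}/\binom{n-1}{i}$ for $i\le (n-1)/2$ and $\psi_{[n-i,1^i]}=0$ for $i\ge n/2$ (once the first column is at least as long as the first row the row cannot be cleared without a zero-weight vertical removal). This truncates the sum at $i=\lfloor (n-1)/2\rfloor$. Splitting off the $i=0$ term, which equals $1$ (the trivial representation), and using the standing conjecture that the $n$-cycle is the least likely element at every time — so that $\sep(t)=1-n!\,P^{*t}(\sigma)$ — produces exactly the claimed alternating sum $\sep(t)=\sum_{i\ge 1}(-1)^{i+1}\binom{n-1}{i}(\binom{n/2-1}{i}/\binom{n-1}{i})^{\,t}$.

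For the tail bound, set $a_i=\binom{n-1}{i}(\binom{n/2-1}{i}/\binom{n-1}{i})^{\,t}$, the magnitude of the $i$-th term. Using $\binom{n-1}{i+1}/\binom{n-1}{i}=(n-1-i)/(i+1)$ and $\binom{n/2-1}{i+1}/\binom{n/2-1}{i}=(n/2-1-i)/(i+1)$ one computes
\[
\frac{a_{i+1}}{a_i}=\frac{(n/2-1-i)^{\,t}}{(i+1)\,(n-1-i)^{\,t-1}}.
\]
Since $n/2-1-i\le \tfrac12(n-1-i)$, the inequality $a_{i+1}\le a_i$ reduces to $n-1-i\le 2^{\,t}(i+1)$, which holds for all $i\ge 0$ whenever $t\ge\log_2(n-1)$. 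With the magnitudes decreasing, the alternating sum is bounded by its first term $a_1=(n-1)\big((n/2-1)/(n-1)\big)^{\,t}<(n-1)2^{-t}$, and taking $t=\log_2 n+c$ gives $\sep(t)<2^{-c}$.

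The main obstacle is conceptual rather than computational: the closed form for $\sep(t)$ rests on the unresolved all-times likelihood-order conjecture (this is precisely why the result must be stated as a conjecture), and once that is granted the derivation is bookkeeping together with the single elementary monotonicity inequality above. The one place where care is needed on the computational side is confirming that the $p=\tfrac12$ reduction of Proposition \ref{n-11ieig} is the clean ratio $\binom{n/2-1}{i}/\binom{n-1}{i}$ exactly, and that the vanishing of $\psi_{[n-i,1^i]}$ for $i\ge n/2$ truncates the sum at the index $\lfloor(n-1)/2\rfloor$ appearing in the statement.
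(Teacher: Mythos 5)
Your derivation is exactly the one the paper intends (and, since the formula rests on the unproven all-times likelihood-order conjecture, neither you nor the paper can do more than derive it conditionally, which you correctly flag): Fourier inversion at an $n$-cycle, the Murnaghan--Nakayama fact that only hooks survive with $\chi_{[n-i,1^i]}(\sigma)=(-1)^i$, and the $p=\tfrac12$ collapse of Proposition \ref{n-11ieig} to $\psi_{[n-i,1^i]}=\binom{n/2-1}{i}/\binom{n-1}{i}$ for $i\le n/2-1$ and $0$ for $i\ge n/2$, which is precisely what Section \ref{conjsep} establishes before stating the conjecture. You go beyond the paper in one respect: the paper merely asserts that the terms decrease in magnitude for $t\ge\log_2(n-1)$, whereas you prove it via the ratio $a_{i+1}/a_i=\frac{n-1-i}{i+1}\bigl(\frac{n/2-1-i}{n-1-i}\bigr)^t\le\frac{n-1-i}{(i+1)2^t}$, and that argument is correct.

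The one discrepancy to note is the coefficient: inversion gives $d_{[n-i,1^i]}\chi_{[n-i,1^i]}(\sigma)=(-1)^i\binom{n-1}{i}$, so your formula has $\binom{n-1}{i}$ where the stated conjecture has $\binom{n-i}{i}$. These agree at $i=1$ (both equal $n-1$) but not for $i\ge 2$; e.g.\ for $n=6$, $i=2$ the contribution of $[4,1,1]$ is $d_{[4,1,1]}=10$ times $(1/10)^t$, not $\binom{4}{2}=6$ times it, so your $\binom{n-1}{i}$ is the coefficient that actually follows from the derivation and the paper's $\binom{n-i}{i}$ appears to be a typo rather than a different route. Since the dominant $i=1$ terms coincide, this does not affect the tail estimate, and your conclusion $\sep(\log_2 n+c)\le 2^{-c}$ stands as the paper claims.
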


\section*{Acknowledgements}

Thank you to the NSF for its generous support under grant DMS-1344199. Thanks also to my advisor, Persi Diaconis, for his suggestion of this project and many useful conversations during its undertaking.  

\bibliographystyle{plain}
\bibliography{InvolutionWalksJoAP}

 \end{document}